\theoremstyle{plain}
\DeclareMathAlphabet{\mathpzc}{OT1}{pzc}{m}{it}
\newtheorem{theorem}{Theorem}
\newtheorem{prop}[theorem]{Proposition}
\newtheorem{cor}[theorem]{Corollary}
\newtheorem{lem}[theorem]{Lemma}
\newtheorem{que}[theorem]{Question}
\newcommand{\aste}[1]{\stackrel{*}{#1}}
\newcommand{\hocolim}{\operatornamewithlimits{\mathrm{hocolim}}}
\newcommand{\holim}{\operatornamewithlimits{\mathrm{holim}}}
\newcommand{\colim}{\operatornamewithlimits{\mathrm{colim}}}
\newcommand{\conn}{\operatornamewithlimits{\mathrm{conn}}}
\newcommand*{\email}[1]{%
    \normalsize\href{mailto:#1}{#1}\par
    }
\newtheorem{propapp}{Proposition}[section]
\newtheorem{corapp}{Corollary}[section]
\newtheorem{lemapp}{Lemma}[section]
\title{Polyhedral joins and graph complexes}
\author{Andr\'es Carnero Bravo\;\orcidlink{0009-0001-0221-5908}}
\affil{Centro de Ciencias Matemáticas, UNAM,\\ A.P. 61-3, Xangari, Morelia, Michoacán 58089, México\\ \email{carnero@matmor.unam.mx}}
\begin{document}
\maketitle
\begin{abstract}
    We give a decomposition of the suspension of a polyhedral join in terms of the polyhedral smash product of the 
    suspension of the family of pairs, and study some cases in which the formula can be desuspended, particularly for polyhedral joins over 
    independence complexes of graphs. We also give some lower bounds for the connectivity of polyhedral joins. We use these results to study 
    the homotopy type of the forest filtration for some lexicographic products of graphs.
\end{abstract}
\textbf{\textit{Keywords:}} Polyhedral joins, graph complexes, homotopy type\\
\textbf{\textit{Mathematics Subject Classification:}} 05E45, 55P10, 55P15, 05C76, 55U10
\tableofcontents
\textbf{Acknowledgments.} This work was supported by UNAM Posdoctoral Program (POSDOC).
The author wishes to thank Omar Antolín-Camarena for his comments and suggestions which improved this paper. 
\section{Introduction}
Given two graphs $G$ and $H$ one can define their lexicographic product as the graph obtained by taking a copy of $H$ for each vertex of 
$G$ and adding all the possible edges between two copies if the corresponding vertices are adjacent in $G$. This construction seems 
natural and one can ask for analogous constructions for simplicial complexes; the polyhedral join is a natural generalization (see 
Section $2$ for definitions) ---these spaces have been studied before \citep{ayzenberg,okurapoljoin,vidaurre}, and are variants of
polyhedral products and polyhedral smash products, which have been 
extensively studied, see \citep{bahripolprodfeathomtheo,gitler,panovtorictopology} and the references within. 
Naturally one can ask if there is a connection of some kind between these objects and the answer is yes,
certain polyhedral joins play a fundamental role in the homotopy type of the suspension of 
some simplicial complexes associated to a lexicographic product of graphs. For this we will need to study the homotopy type of the 
suspension of a polyhedral join. 

The main result we prove about polyhedral joins is Theorem \ref{teosmashjoinpol} which says that
$$\Sigma\aste{Z}_K\left(\underline{X},\underline{A}\right)\simeq\hat{Z}_K\left(\underline{\Sigma X},\underline{\Sigma A}\right).$$
This theorem is (nearly) a generalization of the classical homotopy equivalence $X*Y \simeq \Sigma X \wedge Y$; more precisely,
that equivalence implies that $\Sigma(X*Y) \simeq (\Sigma X) \wedge (\Sigma Y)$, and our main result specializes to this when $K = S^0$.
This result also serves to link polyhedral joins to the more widely studied polyhedral products and polyhedral smash products.
For example, another classical homotopy equivalence, namely $\Sigma\left(X\times Y\right)\simeq\Sigma\left(X\wedge Y\right)\vee\Sigma X\vee\Sigma Y$, has also been generalized: in \cite{gitler}, it is show that if $K$ is a simplicial complex with 
vertex set $\underline{n}$, then
$$\Sigma Z_K\left(\underline{X},\underline{A}\right)\simeq\Sigma\left(\bigvee_{I\subset\underline{n}}\hat{Z}_{K_I}\left(\underline{X},\underline{A}\right)\right),$$
which reduces to the classical equivalence when $K=\Delta^1$. 

In \citep{gitler} it is shown that if $A_i\longhookrightarrow X_i$is null-homotopic for all $i$, then 
\begin{equation}\tag{$i$}\label{forsmsh}
\hat{Z}_K\left(\underline{X},\underline{A}\right)\simeq\bigvee_{\sigma\in K}\mathrm{lk}(\sigma)*\hat{D}(\sigma).
\end{equation}
This result, together with Theorem \ref{teosmashjoinpol}, give us a decomposition to the suspension of a polyhedral join.
It is natural to ask: When are these formulas are valid without the suspensions? For polyhedral products this question has been studied for particular 
families of pairs, namely $(\underline{CX},\underline{X})$ and particular families of simplicial complexes (see
\citep{grbicshift,welkerdecomposable,iriyedecompositionsshift,iriyefatwedge}). Here we study some families of graphs for 
which the formula can be desuspended when taking the polyhedral joins over their independence complex.

Our main applications of Theorem \ref{teosmashjoinpol} are to the study of some simplicial complexes associated to graphs. 
Given a graph $G$ and any $d \in \mathbb{N} \cup \{\infty\}$, we associated in \cite{forestfilt} a simplicial complex
$\mathcal{F}_d(G)$ whose vertices are the same as those of $G$ and where a subset $S$ is a simplex if the induced subgraph
on $S$ is a forest with maximal degree at most $d$ (for $d = \infty$ all degrees are allowed). This family of simplicial complexes
is a natural generalization of the independence complex of a graph, which in this notation is $\mathcal{F}_0(G)$. It is worth remarking
that because of the difficulty of calculating the homotopy type of the independence complex most of the literature has focused on 
specific families of graphs (see for example \cite{indcomplcartprod,Bousquet_M_lou_2007,homotopygoyal,Iriye_2012,MR4587515}). 
Also, the case $d=1$ is also called the $2$-independence complex \citep{salvetti2015,salvetti2018}-- where it was used, 
along with other complexes, to study the local homology of Artin groups. In \citep{forestfilt} the case 
$d=\infty$ was use to give an upper bound of the decycling number of a graph.

For the  lexicographic product $G\circ H$ we obtain the following formula (we consider the empty set to be a 
simplex):
$$\Sigma \mathcal{F}_0(G\circ H)\simeq\bigvee_{\sigma\in \mathcal{F}_0(G)}\sum\left(\mathcal{F}_0\left(G-\bigcup_{v\in\sigma}N[v]\right)*\mathcal{F}_0(H)^{*|\sigma|}\right).$$

In addition to this result about $\mathcal{F}_0$, for the family of complete multipartite graphs we study 
the whole forest filtration, computing its homotopy type for the star $K_{1,n}$ in Proposition \ref{propk1n} as a first step and 
for its suspension in Theorems \ref{teomultinf} and \ref{teomultford}.

The main tactic we will follow will be to give a decomposition of the complex studied into two or more 
subcomplexes, which expresses the complex as the colimit of a cofibrant punctured cube. This will allow us to take 
the homotopy colimit of the punctured cube.  

The structure of the paper is the following: in Section $2$ we give the preliminaries needed form graph theory and homotopy 
colimits; in Section $3$ we define polyhedral joins as well as polyhedral smash products, and we prove 
Theorem \ref{teosmashjoinpol} as well as some other results that are used in the next sections; in Section $4$ we study families of graphs for which
the formula given by Theorem \ref{teosmashjoinpol}  and (\ref{forsmsh}) can be desuspended; lastly, in Section $5$, we study the homotopy 
type of the graph complexes of the forest filtration of a lexicographic product.

\section{Preliminaries}
The graphs will be simple, no loops nor multiedges. For a graph $G$ its vertex set will be $V(G)$ and $E(G)$ will be its edge set. 
$\Delta(G)$ is maximal degree of the graph. For a vertex $v$, $N_G(v)=\{u\in V(G):\;uv\in E(G)\}$ is its open neighborhood 
and $N_G[v]=N_G(G)\cup\{v\}$ its closed neighborhood, 
we omit the subindex $G$ if there is no risk of confusion.
For $S\subseteq V(G)$, $G[S]$ is the graph induced by the set $S$. A forest is a graph 
without cycles. For all the graph definitions not stated here we follow \citep{graphsanddigraphs}.
The \textit{lexicographic product} of the graphs $G$ and $H$ is the graph $G\circ H$ 
with vertex set $V(G)\times V(H)$ and edge set 
$$\{\{(u,v_1),(u,v_2)\}:\;\{v_1,v_2\}\in E(H)\}\cup\{\{(u_1,v_1),(u_2,v_2)\}:\;\{u_1,u_2\}\in E(G)\}$$

A simplicial complex $K$ is a family of subsets of an finite set $V(K)$, 
the vertices of the complex, such that if $\tau \subseteq \sigma$ and $\sigma\in K$, then $\tau\in K$-- we wanna remark that we take the 
empty set as a simplex and allow ghost vertices as is standard while working with polyhedral products. Given a simplicial complex $K$ and 
a simplex $\sigma$, the link of $\sigma$ is the subcomplex 
$\mathrm{lk}(\sigma)=\{\tau\in K:\;\tau\cap\sigma=\emptyset\;\wedge\;\tau\cup\sigma\in K\}$ and its star is 
$\mathrm{st}(\sigma)=\{\tau\in K:\tau\cup\sigma\in K\}$. 
For a vertex we will write $\mathrm{lk}(v)$ and $\mathrm{st}(v)$ insted of $\mathrm{lk}(\{v\})$ or 
$\mathrm{st}(\{v\})$. The $q$-skeleton of a complex $K$, denoted $\mathrm{sk}_qK$, is the subcomplex of all the simplicies 
with at most $q+1$ elements. We will not distinguish between a complex and its geometric realization. 

Given a graph $G$ and a non-negative integer $d$, its \textit{$d$-forest complex} is the complex
$$\mathcal{F}_d(G)=\{\sigma\subseteq V(G):\:G[\sigma] \mbox{ is a forest such that }\Delta(G[\sigma])\leq d\};$$
for $d=\infty$ we take 
$$\mathcal{F}_\infty(G)=\{\sigma\subseteq V(G):\:G[\sigma] \mbox{ is a forest}\}.$$
For $d=0$, $\mathcal{F}_0(G)$ is the independence complex of $G$ and for $d=1$ is also called the $2$-independence 
complex \citep{salvetti2015}.

All product spaces will be taken with the compactly generated topology. Given a topological space $X$, $X^{\wedge n}$ will be the smash 
product of $n$ copies of $X$ and $X^{\ast n}$ will be the join of $n$ copies of $X$. For a family $(X_1,\ast_1),\dots,(X_n,\ast_n)$ 
of pointed CW-complexes, we take its \textit{fat wedge} as the space
$$W(X_1,\dots,X_n)=\left\lbrace(x_1,\dots,x_n)\in\prod_{i=1}^nX_i:\;x_i=\ast_i\mbox{ for at least one } i\right\rbrace$$

Taking $\underline{n}=\{1,\dots,n\}$, $\mathcal{P}(\underline{n})$ its power set and 
$\mathcal{P}_1(\underline{n})=\mathcal{P}(\underline{n})-\{\underline{n}\}$, a punctured $n$-cube $\mathcal{X}$ consists of:
\begin{itemize}
    \item a topological space $\mathcal{X}(S)$ for each $S$ in $\mathcal{P}_1(\underline{n})$, and
    \item a continuous function $f_{S\subseteq T}:\mathcal{X}(S)\longrightarrow\mathcal{X}(T)$ for each $S\subseteq T$,
\end{itemize}
such that $f_{S\subseteq S}=1_{\mathcal{X}(S)}$ and for any $R\subseteq S\subseteq T$ the following diagram commutes:
\begin{equation*}
    \xymatrix{
    \mathcal{X}(R) \ar@{->}[r]^{f_{R\subseteq S}} \ar@{->}[dr]_{f_{R\subseteq T}}& \mathcal{X}(S) \ar@{->}[d]^{f_{S\subseteq T}}\\
     & \mathcal{X}(T).
    }
\end{equation*}  
A punctured $n$-cube of interest for a given topological space $X$ is the constant punctured cube $\mathcal{C}_X$, 
where $\mathcal{C}_X(S)=X$ for any set $S$ and all the functions are $1_X$.
The colimit of a punctured $n$-cube is the space
$$\colim(\mathcal{X})=\bigsqcup_{S\in\mathcal{P}_1(\underline{n})}\mathcal{X}(S)/\sim,$$
where $\sim$ is the equivalence relation generated by $f_{S\subseteq T_1}(x_S)\sim f_{S\subseteq T_2}(x_S)$ for $T_1,T_2$ and $S\subseteq T_1,T_2$. From the definition
is clear that $\colim(\mathcal{C}_X)\cong X$ for any $X$.

For any $n\geq1$ and $S$ in $\mathcal{P}_1(\underline{n})$ we take:
$$\Delta(S)=\left\lbrace(t_1,t_2,\dots,t_n)\in \mathbb{R}^n:\;\sum_{i=1}^nt_i=1\mbox{ and }t_i=0\mbox{ for all }i\in S\right\rbrace,$$
and $d_{S\subseteq T}:\Delta(T)\longrightarrow\Delta(S)$ the corresponding inclusion. Now, for a punctured 
$n$-cube $\mathcal{X}$, the homotopy colimit is 
$$\hocolim(\mathcal{X})=\bigsqcup_{S\in\mathcal{P}_1(\underline{n})}\mathcal{X}(S)\times\Delta(S)/\sim,$$
where $(x_S,d_{S\subseteq T}(t))\sim(f_{S\subseteq T}(x_S),t)$. When $n=2$, we will specify a punctured $2$-cube via a diagram
\begin{equation*}
    \xymatrix{
    \mathcal{D}: & X\ar@{<-}[r]^{f} & Z \ar@{->}[r]^{g} & Y,
    }
\end{equation*}
and its homotopy colimit is called the homotopy pushout. 

Given a punctured $n$-cube $\mathcal{X}$ for $n\geq2$ and defining the punctured $(n-1)$-cubes
$\mathcal{X}_1(S)=\mathcal{X}(S)$ and $\mathcal{X}_2(S)=\mathcal{X}(S\cup\{n\})$,
we have that (see \citep[Lemma 5.7.6]{cubicalhomotopy})
$$\hocolim(\mathcal{X})\cong \hocolim\left(\mathcal{X}\left(\underline{n-1}\right)\longleftarrow \hocolim(\mathcal{X}_1)\longrightarrow \hocolim(\mathcal{X}_2)\right).$$

If for all  $S\subsetneq[n]$ the map 
$$\colim_{T\subsetneq S} X_T\longrightarrow X_S$$
is a cofibration, we call the punctured cube cofibrant.  
If we have CW-complexes $X_1,\dots,X_n$ such that their intersections are subcomplexes and we take the punctured cube given by the 
intersections and the inclusions, then the punctured cube is cofibrant and 
$\hocolim(\mathcal{X})\simeq \colim(\mathcal{X})$ (see \citep[Proposition 5.8.25]{cubicalhomotopy}).

In particular combining the last two observations, we see that we can compute the homotopy type of a union of the CW complexes $X,Y,Z$ that intersect in subcomplexes, by means of three homotopy pushouts, as shown in the following diagram whose top and bottom squares, as well as the rightmost vertical square, are homotopy pushouts and where $R \simeq X \cup Y \cup Z$:

\begin{equation*}
\xymatrix{
X \cap Y \cap Z \ar@{->}[rr] \ar@{->}[dr] \ar@{->}[dd] & & Y \cap Z \ar@{-}[d] \ar@{->}[rd] & & \\
 & X \cap Z \ar@{->}[rr] \ar@{->}[dd] & \ar@{->}[d] & P \ar@{->}[r] \ar@{->}[dd] & Z \ar@{->}[dd] \\
X \cap Y \ar@{-}[r] \ar@{->}[dr] & \ar@{->}[r] & Y \ar@{->}[dr] &  & \\
  & X \ar@{->}[rr] &  & Q \ar@{->}[r] & R
}
\end{equation*}

Given two punctured $n$-cubes $\mathcal{X},\mathcal{Y}$, a map between them is a collection of maps 
$$f_S:\mathcal{X}(S)\longrightarrow\mathcal{Y}(S)$$ 
for each $S$ such that for any pair of sets $S\subseteq T$ the following square commutes:
\begin{equation*}
    \xymatrix{
    \mathcal{X}(S) \ar@{->}[r] \ar@{->}[d] & \mathcal{X}(T) \ar@{->}[d]\\
    \mathcal{Y}(S) \ar@{->}[r] & \mathcal{Y}(T)
    }
\end{equation*}
If each $f_S$ is a homotopy equivalence, we say that the map is a \textit{homotopy equivalence of cubes}, this is 
justified by the fact that the corresponding homotopy colimits are homotopy equivalent (see \citep[Theorem 5.7.8]{cubicalhomotopy}). While 
in general we do require that the corresponding squares commute, for push-out diagrams we only need that the squares to commute up to homotopy 
(see \citep[Theorem 6.2.8]{introhomo}) ---from this is easy to prove the following lemma.

\begin{lem}\label{homocolimpegado}
Let $X,Y,Z$ be spaces with maps $f:Z\longrightarrow X$ and $g:Z\longrightarrow Y$ such that both maps are null-homotopic. Then
$$\hocolim\left(\mathcal{S}\right)\simeq X\vee Y\vee \Sigma Z$$
where 
\begin{equation*}
    \xymatrix{
    \mathcal{S}: & Y \ar@{<-}[r]^{g} & Z \ar@{->}[r]^{f} & X
    }
\end{equation*}
\end{lem}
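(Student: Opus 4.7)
The plan is to use the homotopy invariance of the homotopy pushout under homotopies of its structure maps, reducing the computation to the case of constant maps, and then to directly identify the resulting double mapping cylinder with $X\vee Y\vee\Sigma Z$ via a simple collapse.

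First I would invoke the standard fact that the hocolim of a punctured $2$-cube depends only on the homotopy classes of the defining maps (a basic property of the double mapping cylinder, cf.\ \citep{cubicalhomotopy}). Since $f\simeq c_{x_0}$ and $g\simeq c_{y_0}$ for some constants $x_0\in X$ and $y_0\in Y$, I can replace $\mathcal{S}$ by the diagram with both maps constant and obtain
$$\hocolim(\mathcal{S})\;\simeq\;\hocolim\bigl(Y\xleftarrow{c_{y_0}}Z\xrightarrow{c_{x_0}}X\bigr).$$

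Next I would unwind the explicit formula for the hocolim of a punctured $2$-cube given in the Preliminaries. Parametrising $\Delta(\emptyset)$ by $s\in[0,1]$ via $s\mapsto(s,1-s)$, the right-hand side is the double mapping cylinder
$$W\;=\;\bigl(X\sqcup (Z\times[0,1])\sqcup Y\bigr)\big/\bigl((z,0)\sim y_0,\;(z,1)\sim x_0\bigr).$$
Collapsing the two ends of $Z\times[0,1]$ each to a single point produces the unreduced suspension $\Sigma Z$; thus $W$ is precisely $\Sigma Z$ with its south pole glued to $y_0\in Y$ and its north pole glued to $x_0\in X$.

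Finally I would show $W\simeq X\vee Y\vee\Sigma Z$. Pick any $z_0\in Z$ and let $\gamma\subseteq W$ be the image of $\{z_0\}\times[0,1]$; this is a contractible arc joining $y_0$ to $x_0$. Since $\gamma$ is a cofibration (for a CW structure on $W$), collapsing $\gamma$ is a homotopy equivalence $W\simeq W/\gamma$. In $W/\gamma$ the points $x_0$ and $y_0$ are identified to a common basepoint, and $\Sigma Z/\gamma\simeq\Sigma Z$ because $\gamma$ is a contractible meridian. The resulting space is manifestly the wedge $X\vee Y\vee\Sigma Z$ at that basepoint.

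The main obstacle is cleanly justifying this last step: namely, that collapsing the meridian $\gamma$ yields the expected wedge, and in particular that the quotient of $\Sigma Z$ by a contractible arc remains homotopy equivalent to $\Sigma Z$. This is routine for CW complexes, but it is the only place in the argument that requires more than unwinding definitions.
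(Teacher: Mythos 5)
Your proof is correct. Note, however, that the paper states Lemma \ref{homocolimpegado} \emph{without any proof} (it is treated as a standard fact about double mapping cylinders), so there is no argument of the author's to compare against; what you have written is exactly the folklore argument the paper leaves implicit. Two points in your write-up deserve a word of care. First, the reduction to constant maps uses that the double mapping cylinder of $Y \xleftarrow{\,g\,} Z \xrightarrow{\,f\,} X$ changes only up to homotopy equivalence when $f$ and $g$ are replaced by homotopic maps; this is \emph{not} an instance of invariance of $\hocolim$ under maps of punctured cubes (a homotopy of $f$ does not produce a map of diagrams), but the classical cylinder-shuffling lemma, so your phrasing ``a basic property of the double mapping cylinder'' is the right citation, just be aware it is a separate statement from pointwise homotopy invariance. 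Second, your collapses need $Z\neq\emptyset$ (you choose $z_0$) and a CW (or at least cofibrant pair) hypothesis, so that collapsing the arc $\gamma\subseteq W$ and the contractible subcomplex $\gamma\subseteq \Sigma Z$ are homotopy equivalences and the final identification $W/\gamma = X\vee Y\vee(\Sigma Z/\gamma)\simeq X\vee Y\vee\Sigma Z$ is legitimate; the lemma says only ``spaces,'' but in every application in the paper the spaces are CW complexes and $Z$ is nonempty, so your implicit hypotheses match the actual use. Finally, the basepoints of the wedge are the points $x_0,y_0$ selected by the null-homotopies, so for disconnected $X$ or $Y$ the answer depends on those choices --- an ambiguity already present in the statement of the lemma, not introduced by your proof. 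A marginally shorter packaging of the same idea, avoiding the choice of $\gamma$: with both maps constant, the pushout is $X\sqcup_{x_0=N}\widetilde{\Sigma}Z\sqcup_{S=y_0}Y$, and one may instead collapse the cone $C Z\subseteq \widetilde{\Sigma}Z$ on one side; but this is cosmetic and your version is complete as it stands.
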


While most of the homotopy colimits we will use are over cubical diagrams, we will need more general 
homotopy colimits for a couple of results. Taking a poset $(\mathcal{P},\leq)$ as category and a functor 
$\mathcal{X}:\mathcal{P}\longrightarrow Top$, its homotopy colimit is the space
$$\hocolim_{\mathcal{P}}\mathcal{X}=\mathrm{coeq}\left(\xymatrix{
\displaystyle\coprod_{i\leq j}\mathcal{X}(i)\times|(j\downarrow\mathcal{P})^{op}| \ar@<1ex>[r]\ar@{->}[r]& \displaystyle\coprod_{i}\mathcal{X}(i)\times|(i\downarrow\mathcal{P})^{op}|
}\right)$$
where the maps are induced by the maps $\mathcal{X}(i\leq j)\times 1$ and $1\times|(j\downarrow\mathcal{P})^{op}\rightarrow(i\downarrow\mathcal{P})^{op}|$, and 
$|(i\downarrow\mathcal{P})^{op}|$ is the nerve of the opposite of the under category of $i$. This construction is known as the Bousfield-Kan formula. 
Notice that this definition agrees with 
the definition for homotopy colimits of punctured cubes. Homotopy comlimts for other diagrams are defined in a similar fashion, see 
\citep{dugger2008primer,cubicalhomotopy,strom}. If $\mathcal{I}$ is a small category, then its \textit{nerve} is homeomorphic to the homotopy 
colimit of the constant functor $C_*$, \textit{i.e.} $|\mathcal{I}|\cong\hocolim_\mathcal{I}C_*$ \citep[see Example 4.1]{dugger2008primer}.

Giving a functor $F:\mathcal{I}\longrightarrow\mathcal{J}$ 
and an object $j$ in $\mathcal{J}$, the category $(j\downarrow F)$ is the category with objects $[i,f:j\rightarrow F(i)]$ with $i$ in 
$\mathcal{I}$ and, given $[i,f:j\rightarrow F(i)]$ and $[i',g:j\rightarrow F(i')]$, a map between them is a map $\varphi:i\rightarrow i'$ 
such that $F(\varphi)\circ f=g$. We say $F$ is \textit{homotopy terminal} if $|(j\downarrow F)|$ is non-empty and contractible for all $j \in \mathcal{J}$.

\begin{prop}\label{propnathomsmjoin}
Let $X_1,\dots,X_n$ be CW-complexes, with $n\geq2$. Then there is a space $Z$ and natural 
homotopy equivalences
$$\bigwedge_{i=1}^n\Sigma X_i\longleftarrow Z\longrightarrow\Sigma\left(\bigast_{i=1}^nX_i\right)$$
where the base points are $[\{0\}\times X_i]$.
\end{prop}
\begin{proof}
We take $\mathcal{I}=\mathcal{P}_1(\underline{2})$ and $\mathcal{I}^n=\mathcal{I}\times\cdots\times\mathcal{I}$ the category given by 
the product of $n$ copies of $\mathcal{I}$. For $n\geq2$ we construct the category $\mathcal{J}_n$ by taking 
$\mathcal{P}_1(\underline{n})$, adding two new objects $0$ and $1$ and adding all possible arrows from $\mathcal{P}_1(\underline{n})$ to 
the new objects. 

We define 
$$\mathcal{Y}(x)=\left\lbrace\begin{array}{cc}
   \{0\}  & \mbox{ if } x=0\\
   \{1\}  & \mbox{ if } x=1\\
   \displaystyle\prod_{i\notin x}X_i 
\end{array}\right.$$
where the maps in $\mathcal{P}_1(\underline{n})$ are sent to the corresponding projections and the rest are sent to the corresponding 
constant maps. Then 
$$\hocolim_{\mathcal{J}_n}\mathcal{Y}\cong\Sigma\left(\bigast_{i=1}^nX_i\right).$$

Now, we define $\mathcal{X}:\mathcal{I}^n\longrightarrow Top$ as $\mathcal{X}(S_1,\dots,S_n)=B_1\times\cdots\times B_n$, 
where for each $i:$
$$B_i=\left\lbrace\begin{array}{cc}
   \{0\}  & \mbox{ if } S_i=\{2\}\\
   \{1\}  & \mbox{ if } S_i=\{1\}\\
    X_i & \mbox{ if } S_i=\emptyset
\end{array}\right.$$
and each maps is sent to the product of identities and constant maps accordingly to if the domain and codomain are the same or not. Then
$$\hocolim_{\mathcal{I}^n}\mathcal{X}\cong\prod_{i=1}^n\Sigma X_i$$
Now, we take $\mathcal{W}_n$ the full subcategory of $\mathcal{I}^n$ with objects $(S_1,\dots,S_n)$ such that $S_i=\{2\}$ for some
$i$, and take $F:\mathcal{W}_n\longhookrightarrow\mathcal{I}^n$ the inclusion. For all $n\geq2$ 
$$\hocolim_{\mathcal{W}_n}\mathcal{X}\circ F\cong W(\Sigma Y_1,\dots,\Sigma Y_n).$$
therefore
$$\left(\Sigma X_1\right)\wedge\cdots\wedge\left(\Sigma X_n\right)=\mathrm{cofiber}\left(\hocolim_{\mathcal{W}_n}\mathcal{X}\circ F\longhookrightarrow\hocolim_{\mathcal{I}^n}\mathcal{X}\right)$$

We affirm that $|\mathcal{W}_n|\simeq*$. This follows from
$$|\mathcal{W}_n|\cong\hocolim_{\mathcal{W}_n}C_*\cong W(\Sigma*,\dots,\Sigma*)\simeq*$$

Now, we define $G:\mathcal{I}^n\longrightarrow\mathcal{J}_n$ as 
$$G((S_1,\dots,S_n))=\left\lbrace\begin{array}{cc}
   0  & \mbox{ if } S_i=\{2\} \mbox{ for some } i\\
   1  & \mbox{ if } S_i=\{1\} \mbox{ for all } i\\
   S=\{i:\;S_i\neq\emptyset\} & \mbox{otherwise}
\end{array}\right.$$ 
 
Now, we show that the objects $|(j\downarrow G)|$ are contractible for any $j$. We take $j$ an object in $\mathcal{J}_n$.
If $j=0$, then $(0\downarrow G)\cong\mathcal{W}_n$, thus $|(0\downarrow G)|\simeq*$. If $j=1$, then $(1\downarrow G)$ has only one object and one map. Thus $|(1\downarrow G)|\simeq*$.

Assume $j\neq0,1$, then there is only one object $i_j$ in $\mathrm{Obj}(\mathcal{I}^n)$ such that $G(i_j)=j$. Now, we take $W(j)$ the full 
subcategory of $(j\downarrow G)$ which is given by the objects of the form $[w,j\rightarrow0]$ where $w$ is in $\mathrm{Obj}(\mathcal{W}_n)$. 
Then $W(j)\cong\mathcal{W}_n$ and $|W(j)|\simeq*$. Next, we take $A(j)$ the full subcategory of $(j\downarrow G)$ given by objects 
$[i,j\rightarrow G(i)]$ where $[i,i_j\rightarrow i]$ is an object of $(i_j\downarrow\mathcal{I}^n)$. 
By construction there is a bijection between  $\mathrm{Obj}((i_j\downarrow\mathcal{I}^n))$ and 
$\mathrm{Obj}(A(j))$, there is also a bijection between the maps. Thus $A(j)\cong(i_j\downarrow\mathcal{I}^n)$ and $|A(j)|\simeq*$.
Lastly, we take $B(j)$ the full subcategory given by the objects that are objects of $W(j)$ and $A(j)$ simultaneously. We get that 
$$|(j\downarrow G)|\cong\hocolim_{(j\downarrow G)}C_*\cong\hocolim\left(\hocolim_{A(j)}C_*\longhookleftarrow\hocolim_{B(j)}C_*\longhookrightarrow\hocolim_{W(j)}C_*\right)$$
$$\cong\hocolim\left(|W(j)|\longhookleftarrow|B(j)|\longhookrightarrow|A(j)|\right)\simeq\Sigma|B(j)|$$
Notice that the objects of $B(j)$ are given by an object $w$ of $\mathcal{W}_n$ such that there is a map $i_j\rightarrow w$ in 
$\mathrm{Hom}_{_{\mathcal{I}^n}}(i_j,w)$-- by constructions there is only one map or non.  
Now, we define $\rho(i_j)$ as the number of entries in $i_j$ that are equal to $\emptyset$. If $\rho(i_j)=1$, then $B(j)$ only 
have one object. We assume $\rho(i_j)\geq2$. Given a object $[w,j\rightarrow0]$ in $B(j)$, it is determined by $i_j$. To see this, we take  
$l_1<\cdots<l_{\rho(i_j)}$ the entries of $i_j$ which are equal to $\emptyset$, then $w$ is determined by these entries as the rest are 
equal to $\{1\}$. Thus $B(j)\cong\mathcal{W}_{\rho(i_j)}$. In any case, $|B(j)|\simeq*$.

Therefore $G:\mathcal{I}^n\longrightarrow\mathcal{J}_n$ is homotopy terminal, this imply that the 
induced natural map 
$$\hocolim_{\mathcal{I}^n}(\mathcal{Y}\circ G)\longrightarrow\hocolim_{\mathcal{J}_n}\mathcal{Y}$$
is a (weak) homotopy equivalence (see  \citep[Theorem 6.7]{dugger2008primer}). 
The space $Z$ of the statement is $\hocolim_{\mathcal{I}^n}(\mathcal{Y}\circ G)$. Thus only rest to show one homotopy equivalence. 

Now, we define 
$\alpha:\mathcal{X}\longrightarrow\mathcal{Y}\circ G$ with
$$\alpha_{(S_1,\dots,S_n)}:\mathcal{X}((S_1,\dots,S_n))\longrightarrow\mathcal{Y}\circ G((S_1,\dots,S_n))$$ 
the constant function if $S_i=\{2\}$ for some $i$ and the obvious homeomorphism in other case. We obtain a map 
$$f_\alpha:\hocolim_{\mathcal{I}^n}\mathcal{X}\longrightarrow\hocolim_{\mathcal{I}^n}\mathcal{Y}\circ G$$
Next we take the following commutative diagram  
\begin{equation*}
    \xymatrix{
     \displaystyle\hocolim_{\mathcal{W}_n}\mathcal{X}\circ F\ar@{->}[rr] \ar@{^(->}[d]& & \displaystyle\hocolim_{\mathcal{W}_n}C_\ast \ar@{^(->}[d]\ar@{->}[rr]&& \ast\ar@{->}[d]\\
    \displaystyle\hocolim_{\mathcal{I}^n}\mathcal{X} \ar@{->}[rr]_{f_\alpha}&& \displaystyle\hocolim_{\mathcal{I}^n}\mathcal{Y}\circ G \ar@{->}[rr]_{g}&& \mathrm{P}
    }
\end{equation*}
where $\mathrm{P}$ is the push-out of the right square. By construction is clear that the left square is also a push-out.
Therefore, the external square is a push-out and this push-out is 
$$\mathrm{cofiber}\left(\hocolim_{\mathcal{W}_n}\mathcal{X}\circ F\longhookrightarrow\hocolim_{\mathcal{I}^n}\mathcal{X}\right)$$
Now, $\hocolim_{\mathcal{W}_n}C_\ast\simeq\ast$, thus the map $g$ is a homotopy equivalence (see \citep[Proposition 3.6.2]{cubicalhomotopy}).
\end{proof}

In the next section we will give some bounds for the connectivity of a polyhedral join. For this we will need a 
folklore result. We say that a connected CW $Y$ is $n$-\textit{truncated} if $\pi_q(Y)\cong0$ for all $q>n$ and any base point. 
The \textit{$n$-truncation or $n$-th Postnikov section} is a functor $\mathrm{P}_n:CW\longrightarrow CW$ 
such that for any connected CW-complex $X$ 
the space $\mathrm{P}_n X$ is $n$-truncated and there is a $(n+1)$-connected map $\Phi_n:X\longrightarrow\mathrm{P}_n X$-- we want the 
construction to be functorial and we do not need fibrations between consecutive sections as in the usual definition of the Postnikov tower. 
If $X=\emptyset$, then we take $\mathrm{P}_nX=\emptyset$. We will use the following proposition in next section, suprisingly we could not 
find a reference for this folklore result, for completeness we will provide a proof in the Appendix as well as the definition and basic 
properties of $\mathrm{P}_nX$.

\begin{prop}\label{postnikovhocolim}
Take $\mathcal{I}$ a small category and $\mathcal{X}:\mathcal{I}\longrightarrow Top$ such that 
$\mathcal{X}(i)$ is either empty or a connected CW-complex for any $i$ in $\mathcal{I}$. Then
$$\mathrm{P}_n\left(\hocolim_{i\in\mathcal{I}}\mathcal{X}(i)\right)\simeq\mathrm{P}_n\left(\hocolim_{i\in\mathcal{I}}\mathrm{P}_n(\mathcal{X}(i))\right)$$
\end{prop}

\section{Polyhedral products}
Given a family of pointed pairs of CW-complexes $(\underline{X},\underline{A})=\{(X_i,A_i)\}_{i=1}^n$ and $K$ a simplicial complex on 
$n$ vertices, we define the \textit{polyhedral smash product} determined by $(\underline{X},\underline{A})$ and $K$ as the space
$$\hat{Z}_K(\underline{X},\underline{A})=\bigcup_{\sigma\in K}\hat{D}(\sigma)$$
with
$$\hat{D}(\sigma)=\bigwedge_{i=1}^nY_i,\;\;\;\;\mbox{where }Y_i=\left\lbrace\begin{array}{cc}
    X_i & \mbox{ if }i\in\sigma \\
    A_i & \mbox{ if }i\notin\sigma.
\end{array}\right.$$

\begin{theorem}\label{polysmash}\citep{gitler}
Let $K$ be a complex with $n$ vertices and $(\underline{X},\underline{A})$ a family of pointed pairs of CW-complexes such that 
$A_i\longhookrightarrow X_i$ is null-homotopic. Then 
$$\hat{Z}_K(\underline{X},\underline{A})\simeq\bigvee_{\sigma\in K}\mathrm{lk}(\sigma)*\hat{D}(\sigma)$$
\end{theorem}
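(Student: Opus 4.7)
The plan is to realize $\hat{Z}_K(\underline{X},\underline{A})$ as a homotopy colimit over the face poset of $K\cup\{\emptyset\}$ and then iteratively apply Lemma~\ref{homocolimpegado} to split the hocolim into the claimed wedge. Write $\mathcal{Z}\colon\sigma\mapsto\hat{D}(\sigma)$ for the diagram indexed on $K\cup\{\emptyset\}$ ordered by inclusion. Because each $\hat{D}(\sigma)$ is a CW-subcomplex of $\bigwedge_{i\in\underline{n}}X_i$ and the pieces meet in subcomplexes, the cofibrancy result from \citep{cubicalhomotopy} recalled in the preliminaries gives $\colim\mathcal{Z}\simeq\hocolim\mathcal{Z}$; by construction this colimit is $\hat{Z}_K(\underline{X},\underline{A})$. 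Moreover, for $\sigma\subsetneq\tau$ the transition map $\hat{D}(\sigma)\to\hat{D}(\tau)$ smashes the null-homotopic inclusions $A_i\hookrightarrow X_i$ for $i\in\tau\setminus\sigma$ with identities, and smashing with a null-homotopic map is null-homotopic; so every non-identity arrow of $\mathcal{Z}$ is null-homotopic.

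Next I would establish, by induction on the number of simplices, a general splitting: for a pointed diagram $F$ over the face poset of a simplicial complex $L$ augmented with $\emptyset$, whose non-identity transition maps are all null-homotopic,
$$\hocolim F\;\simeq\;\bigvee_{\sigma\in L\cup\{\emptyset\}}\mathrm{lk}_L(\sigma)*F(\sigma),$$
with the convention $\mathrm{lk}_L(\emptyset)=L$. The inductive step picks a maximal simplex $\sigma_0$, sets $L'=L\setminus\{\sigma_0\}$, and presents $\hocolim F$ as a homotopy pushout $\hat{D}(\sigma_0)\leftarrow\hocolim F|_{\partial\sigma_0}\to\hocolim F|_{L'}$ using an iterative hocolim decomposition in the spirit of Lemma~5.7.6 of \citep{cubicalhomotopy}. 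Both legs of this pushout are null-homotopic---the right leg because every transition map into $F(\sigma_0)$ is null, and the left leg because it factors through the wedge summands of $\hocolim F|_{L'}$ indexed by the proper faces of $\sigma_0$, again via null transition maps---so Lemma~\ref{homocolimpegado} splits the pushout. The new suspension term $\Sigma\hocolim F|_{\partial\sigma_0}$ supplies the join $\mathrm{lk}_L(\sigma_0)*F(\sigma_0)$, while the inherited wedge summands update $\mathrm{lk}_{L'}(\sigma)$ to $\mathrm{lk}_L(\sigma)$ for every face $\sigma\subsetneq\sigma_0$ by incorporating the new simplex $\sigma_0\setminus\sigma$ into the link.

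Applied to $\mathcal{Z}$ with $L=K$ and $F(\sigma)=\hat{D}(\sigma)$, this splitting yields
$$\hat{Z}_K(\underline{X},\underline{A})\;\simeq\;\bigvee_{\sigma\in K\cup\{\emptyset\}}\mathrm{lk}_K(\sigma)*\hat{D}(\sigma)\;=\;\bigl(K*\hat{D}(\emptyset)\bigr)\vee\bigvee_{\sigma\in K}\mathrm{lk}_K(\sigma)*\hat{D}(\sigma),$$
since $\mathrm{lk}_K(\emptyset)=K$. This is the claim.

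The main obstacle is the inductive splitting itself: verifying that the two legs of the pushout decomposition really are null-homotopic (which uses that each transition map between $\hat{D}$'s is null, but must be checked through the iterated hocolims) and, more subtly, that the suspension terms generated by successive applications of Lemma~\ref{homocolimpegado} combine correctly into the single join $\mathrm{lk}_K(\sigma)*\hat{D}(\sigma)$ for each face $\sigma$. This combinatorial bookkeeping---tracking how adding a top-dimensional simplex modifies the link of each of its faces and matching those modifications to the wedge summands produced by the pushout splitting---is where the real work of the proof lies.
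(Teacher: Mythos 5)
Note first that the paper contains no proof of Theorem~\ref{polysmash}; it is quoted from \citep{gitler}. So your proposal must be judged on its own merits, against the standard argument. Your overall strategy---realizing $\hat{Z}_K(\underline{X},\underline{A})$ as a homotopy colimit over the augmented face poset and splitting it into a wedge indexed by faces, with $\mathrm{lk}(\sigma)$ arising from the part of the poset above $\sigma$---is indeed the standard route. But your key intermediate lemma is false as stated, and this is a genuine gap: having every transition map \emph{individually} null-homotopic does not imply $\hocolim F\simeq\bigvee_{\sigma}\mathrm{lk}_L(\sigma)*F(\sigma)$; the null-homotopies must be chosen coherently across the poset. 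Concretely, take $L=\partial\Delta^2$, $F(\emptyset)=S^1$, $F(\{i\})=D^2$ with the boundary inclusions, and $F(\{i,j\})=D^2/S^1=S^2$ with the quotient maps $q\colon D^2\to S^2$. The diagram commutes strictly (each composite $S^1\to S^2$ is constant) and every arrow is null-homotopic, yet decomposing the hocolim as in Lemma 5.7.6 of \citep{cubicalhomotopy} exhibits it as the homotopy pushout of
$$S^2\longleftarrow \Sigma_u S^1 \stackrel{\psi}{\longrightarrow} S^2\vee S^2,$$
where the left map has degree $1-1=0$ but $\psi_*=(1,-1)$ on $H_2$, so $H_2(\hocolim F)\cong\mathbb{Z}^2$ and $H_3=0$; your formula predicts $S^3\vee S^2\vee S^2\vee S^2$, with $H_2\cong\mathbb{Z}^3$ and $H_3\cong\mathbb{Z}$. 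The step of your induction that this kills is exactly the claim that the leg $\hocolim F|_{\partial\sigma_0}\to\hocolim F|_{L'}$ is null because it ``factors through wedge summands via null transition maps'': a map of diagrams that is levelwise null-homotopic need \emph{not} induce a null-homotopic map of homotopy colimits. In the example, the levelwise-null maps $S^1\hookrightarrow D^2$, $q$, $q$ induce on hocolims (up to equivalence) the pinch map $S^2\to S^2\vee S^2$, which is essential. You flag the link bookkeeping as the hard part, but the argument fails earlier than that.

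The way to repair it---and this is where the real content of the theorem lies---is to use the special smash form of the diagram, which your write-up never does (you invoke only the arrowwise hypothesis, which my example satisfies). Fix once and for all a based null-homotopy $H_i$ of each inclusion $A_i\hookrightarrow X_i$. Every transition map of $\sigma\mapsto\hat{D}(\sigma)$ is a smash of these fixed inclusions with identities, so the homotopies $\bigwedge_i H_i\wedge 1$ commute strictly with all structure maps; this coherence lets one replace the diagram, up to objectwise homotopy equivalence of diagrams, by one whose non-identity maps are honestly constant. For a constant-maps diagram over a poset $P$ the wedge lemma of Welker--Ziegler--\v{Z}ivaljevi\'c gives $\hocolim F\simeq\bigvee_{p\in P}\Delta(P_{>p})*F(p)$, and for the augmented face poset of $K$ one has $\Delta(P_{>\sigma})\cong\mathrm{sd}(\mathrm{lk}_K(\sigma))$ (and $\mathrm{sd}(K)$ for $\sigma=\emptyset$), which is exactly the stated formula; alternatively, your induction over maximal simplices goes through verbatim once the maps are constant, since then both pushout legs really are null and Lemma~\ref{homocolimpegado} applies. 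So the architecture of your proof is sound, but the lemma it rests on must be re-founded on coherent (constant-up-to-compatible-homotopy) maps rather than on arrowwise null-homotopy.
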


Given a complex $K$ with vertices $\underline{n}$ and $(\underline{X},\underline{A})=\{(X_1,A_1),\dots,(X_n,A_n)\}$ a family of 
pairs of CW-complexes, we define the \textit{polyhedral join} as the space
$$\aste{Z}_{K}(\underline{X},\underline{A})=\bigcup_{\sigma\in K}\aste{D}(\sigma)$$
with 
$$\aste{D}(\sigma)=\bigast_{i=1}^nY_i,\;\;\;\;\mbox{where }Y_i=\left\lbrace\begin{array}{cc}
    X_i & \mbox{ if }i\in\sigma \\
    A_i & \mbox{ if }i\notin\sigma
\end{array}\right.$$
With the polyhedral smash product all the spaces must be non-empty, but for the polyhedral join we will allow pairs
$(X,\emptyset)$, where $X$ will always be non-empty.

We state without proof the following lemma which follows from the definition of the polyhedral join.
\begin{lem}\label{poljoinovjoin}
Let $K_1,\dots,K_r$ complexes with disjoint vertex sets. Taking $\displaystyle K=\bigast_{i=1}^rK_i$, a family of CW pairs 
$\left(\underline{X^i},\underline{A^i}\right)$ for each $K_i$ and $\left(\underline{X},\underline{A}\right)$ the family formed from all 
these families, we have that
$$\aste{Z}_K\left(\underline{X},\underline{A}\right)\cong\bigast_{i=1}^r\aste{Z}_{K_i}\left(\underline{X^i},\underline{A^i}\right)$$
\end{lem}

Next proposition will give us a lower bound for the connectivity of the polyhedral join.
\begin{prop}\label{connpoljoin}
Let $K$ be a connected simplicial complex with $n\geq2$ vertices and $\left(\underline{X},\underline{A}\right)$ a family of CW-pairs.
\begin{enumerate}
    \item If $conn(X_i)\geq l\geq \mathrm{conn}(K)$ and $A_i=\emptyset$ for all $i$, then for all $r\leq l+1$ 
    $$\pi_r\left(\aste{Z}_K(\underline{X},\underline{\emptyset})\right)\cong\pi_r(K)$$
    \item If for all $i$, $X_i\neq\emptyset\neq A_i$, then $\conn\left(\aste{Z}_K(\underline{X},\underline{A})\right)\geq n-2$
     \item  If for all $i$, $\conn(X_i)>\conn(A_i)\geq-1$, then $\displaystyle\conn\left(\aste{Z}_K(\underline{X},\underline{A})\right)\geq2n-1+\sum_{i=1}^nconn(A_i)$
\end{enumerate}
\end{prop}
\begin{proof}
In any case, let $Y$ be the polyhedral join and take $r$ equal to the bound in each case. By Proposition \ref{postnikovhocolim}
$$\mathrm{P}_r(Y)\simeq \mathrm{P}_r\left(\hocolim_{\mathcal{K}}\aste{D}\right)\simeq\mathrm{P}_r\left(\hocolim_{\mathcal{K}}\mathrm{T}_r^\infty\left(\aste{D}\right)\right)$$
In the first case the only spaces with non-contractible truncations are 
the space associated to the empty set and the ones associated to the vertices, in the second case all the spaces have trivial 
truncations and in the third case only the space associated to the empty set has a non-contractible truncation. 
We can change the diagram in each case for one with a single point space for each contractible space and all the maps for the constant map.
Therefore, by the Wedge Lemma (see \citep[Proposition 3.5]{zieglerhocolim}), we have that:
\begin{itemize}
    \item For the first case, $\displaystyle\mathrm{P}_r(Y)\simeq\mathrm{P}_r\left(K\vee\bigvee_{i=1}^nK(\pi_r(X_i),r)*\mathrm{lk}_K(i)\right)$.
    \item For the second case, $\displaystyle\mathrm{P}_r(Y)\simeq\mathrm{P}_r(*)$.
    \item For the third case, $\displaystyle\mathrm{P}_r(Y)\simeq\mathrm{P}_r\left(K*\mathrm{P}_r\left(\bigast_{i=1}^nA_i\right)\right)$.
\end{itemize}
We arrive at the desired result for the second and third cases. For the first case, by Proposition \ref{postnikovhocolim}
$$\mathrm{P}_r\left(K\vee\bigvee_{i=1}^nK(\pi_r(X_i),r)*\mathrm{lk}_K(i)\right)\simeq$$
$$\mathrm{P}_r\left(\hocolim\left(\mathrm{P}_rK\longleftarrow\mathrm{P}_r(\ast)\longrightarrow\mathrm{P}_r\left(\bigvee_{i=1}^nK(\pi_r(X_i),r)*\mathrm{lk}_K(i)\right)\right)\right)\simeq$$
$$\mathrm{P}_r\left(\hocolim\left(\mathrm{P}_rK\longleftarrow\ast\longrightarrow \ast\right)\right)\simeq\mathrm{P}_r(\mathrm{P}_r K)\simeq\mathrm{P}_r(K)$$
\end{proof}

There is also a definition of polyhedral product similar to the polyhedral smash product or polyhedral join given above, but using the 
Cartesian product of spaces-- this is the space $Z_K(\underline{X},\underline{A})$ we mentioned in the introduction. 
In \citep{gitler} the homotopy type of the suspension of such a polyhedral product is given in terms of the polyhedral smash product; here 
we show that the suspension of a polyhedral join has the same homotopy type as a polyhedral smash product. 

\begin{theorem}\label{teosmashjoinpol}
If $(\underline{X},\underline{A})=\{(X_1,A_1),\dots,(X_n,A_n)\}$ is a family of  pairs of CW-complexes and 
$(\underline{\Sigma X},\underline{\Sigma A})=\{(\Sigma X_1,\Sigma A_1),\dots,(\Sigma X_n,\Sigma A_n)\}$ the family of their suspensions as pointed pairs, then
$$\Sigma\aste{Z}_K(\underline{X},\underline{A})\simeq\hat{Z}_K\left(\underline{\Sigma X},\underline{\Sigma A}\right).$$
\end{theorem}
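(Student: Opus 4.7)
The plan is to realize both sides of the equivalence as homotopy colimits of naturally equivalent diagrams indexed by the face poset of $K$, reducing the statement to the classical identification of suspensions of joins with smashes of suspensions.

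First, I would verify the simplex-by-simplex equivalence. For pointed CW complexes one has the classical natural equivalence $X \ast Y \simeq \Sigma(X \wedge Y)$, and combining with $\Sigma X \wedge \Sigma Y \simeq \Sigma^2(X \wedge Y)$ and iterating yields
$$\Sigma\bigl(Y_1 \ast \cdots \ast Y_n\bigr) \simeq \Sigma^n\bigl(Y_1 \wedge \cdots \wedge Y_n\bigr) \simeq \Sigma Y_1 \wedge \cdots \wedge \Sigma Y_n.$$
Specializing to the coordinates $Y_i$ that define $J(\sigma)$, this gives $\Sigma J(\sigma) \simeq \hat{D}_{\Sigma}(\sigma)$, where $\hat{D}_{\Sigma}(\sigma)$ denotes the $\hat{D}$-construction for the suspended family $(\underline{\Sigma X},\underline{\Sigma A})$. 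Because the classical equivalence $X \ast Y \simeq \Sigma(X \wedge Y)$ is natural in both variables, the identification is compatible with the inclusions $J(\tau)\hookrightarrow J(\sigma)$ and $\hat{D}_\Sigma(\tau)\hookrightarrow \hat{D}_\Sigma(\sigma)$ that arise from the face inclusions $\tau \subseteq \sigma$ and the CW-pair maps $A_i\hookrightarrow X_i$.

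Next, I would present $\aste{Z}_K(\underline{X},\underline{A})$ as the colimit of $\sigma \mapsto J(\sigma)$ and $\hat{Z}_K(\underline{\Sigma X},\underline{\Sigma A})$ as the colimit of $\sigma \mapsto \hat{D}_{\Sigma}(\sigma)$, both indexed by the face poset of $K$ (with $\emptyset$ included). The CW-pair hypothesis on each $(X_i,A_i)$ makes the diagrams cofibrant in the sense recalled in the preliminaries, so each colimit agrees with the corresponding homotopy colimit. Since $\Sigma$ preserves homotopy colimits (being a left adjoint), the naturality established in the previous step lets one chain
$$\Sigma\aste{Z}_K(\underline{X},\underline{A}) \simeq \hocolim_{\sigma\in K}\Sigma J(\sigma) \simeq \hocolim_{\sigma\in K}\hat{D}_{\Sigma}(\sigma) \simeq \hat{Z}_K(\underline{\Sigma X},\underline{\Sigma A}),$$
which is the desired equivalence.

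The main obstacle is the coherent naturality of the pointwise equivalences: one must check that the iterated identification $\Sigma(Y_1\ast\cdots\ast Y_n) \simeq \Sigma Y_1\wedge\cdots\wedge\Sigma Y_n$ assembles into a bona fide natural transformation of face-poset diagrams, rather than just a family of pointwise equivalences, so that the induced map of homotopy colimits is itself an equivalence. If this coherence becomes awkward, I would fall back on an induction on the number of simplices of $K$, at each step presenting both sides as pushouts along the join/smash over the link of the newly added simplex (using the three-cube comparison recalled in the preliminaries) and comparing the resulting presentations after applying $\Sigma$.
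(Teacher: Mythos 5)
Your overall strategy---presenting both sides as homotopy colimits of diagrams of joins and smashes and comparing them levelwise via the identification $\Sigma(Y_1\ast\cdots\ast Y_l)\simeq\Sigma Y_1\wedge\cdots\wedge\Sigma Y_l$---is the same as the paper's, which indexes by a punctured cube over the maximal simplices of $K$ where you use the face poset, and which resolves the coherence problem you flag by constructing explicit, strictly commuting quotient maps out of cube models $I^{l}\times\prod_i B_i$ (that is, it carries out precisely the coherence check your main line defers). The genuine gap is elsewhere: you work throughout with pointed, nonempty spaces, but the theorem is stated for arbitrary pairs of CW-complexes with $A_i=\emptyset$ allowed---and $A_i=\emptyset$ is in fact the case in every later application (note the paper's use of $\hat{Z}_{\mathcal{F}_0(G)}(\Sigma\mathcal{F}_0(H),\mathbb{S}^0)$, reflecting $\Sigma\emptyset=\mathbb{S}^0$ under unreduced suspension). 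In that case the pairs cannot be pointed, the classical pointed equivalence you invoke does not literally apply, and your naturality claim fails: there is no pointed map $\emptyset\to X_i$, and in a coordinate promoted from $\emptyset$ to $X_i$ the smash-side structure map $\mathbb{S}^0\wedge(-)\to\Sigma X_i\wedge(-)$ is only null-homotopic (it factors through the reduced cone $\Sigma(\ast_i)\wedge(-)$), while on the join side the corresponding composite becomes honestly constant after collapsing the contractible subspace $\Sigma\bigl(\bigcup_j z_j\ast(\cdots)\bigr)$, since $J(\tau)$ lies inside the cone $x_i\ast J(\tau)$. Matching the two sides therefore forces one to replace the structure maps by constant maps exactly when the set of empty coordinates changes; this bookkeeping---the function $\rho(S)$, the special value $\mathbb{S}^0$ when all coordinates are empty, and the declaration that the comparison cubes use constant maps when $\rho(S)\neq\rho(T)$---is the bulk of the paper's actual proof and is entirely absent from your plan. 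Levelwise null-homotopy is not enough to compare homotopy colimits; one needs coherently (here: strictly) commuting maps of diagrams, which is why the paper introduces the intermediate cubes $\tilde{\mathcal{X}}$, $\tilde{\mathcal{Y}}$ and $\mathcal{Z}$.

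Two smaller points. First, ``$\Sigma$ preserves homotopy colimits (being a left adjoint)'' is not right as stated: the relevant suspension is unreduced (it must be, since the diagrams are unpointed and possibly empty), and unreduced suspension is not a left adjoint; what is true, and what the paper cites (Corollary 5.8.10 of \citep{cubicalhomotopy}), is $\Sigma\hocolim(\mathcal{X})\simeq\hocolim(\Sigma\mathcal{X})$ for punctured cubes. For your face-poset indexing you would need the analogous statement, which does hold here because the face poset including $\emptyset$ has an initial object, hence contractible nerve; relatedly, the cofibrancy criterion ``recalled in the preliminaries'' is stated only for punctured cubes, so your colim-equals-hocolim step also needs a (standard, but unproved here) extension. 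Second, in the nonempty well-pointed case your plan is sound once basepoints are chosen compatibly (a vertex $a_i\in A_i$ together with its image in $X_i$), since the collapse maps realizing the join/smash identification are then strictly natural; the missing content is concentrated entirely in the empty case.
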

\begin{proof}
If $\sigma_1,\dots,\sigma_r$ are the maximal simplexes of $K$, we take the punctured $r$-cube and define the diagram 
$\mathcal{Y}:\mathcal{P}_1(\underline{r})\longrightarrow Top$ as
$$\mathcal{Y}(S)=\bigcap_{i\in S^c}\aste{D}(\sigma_i)$$
with the inclusions as maps. Then 
$$\aste{Z}_{K}(\underline{X},\underline{A})\simeq \hocolim_{\mathcal{P}_1(\underline{r})}(\mathcal{Y})$$ 
and we have a weak-homotopy equivalence 
$\Sigma\hocolim_{\mathcal{P}_1(\underline{r})}(\mathcal{Y})\simeq\hocolim_{\mathcal{P}_1(\underline{r})}(\Sigma\mathcal{Y})$ 
(see \citep{cubicalhomotopy} Corollary 5.8.10). Now, we define the diagram 
$\hat{\mathcal{Y}}:\mathcal{P}_1(\underline{r})\longrightarrow Top$ as
$$\hat{\mathcal{Y}}(S)=\bigcap_{i\in S^c}\hat{D}(\sigma_i)$$
and again with the inclusions as maps. Thus 
$$\hat{Z}_K\left(\underline{\Sigma X},\underline{\Sigma A}\right)\simeq\hocolim_{\mathcal{P}_1(\underline{r})}\hat{\mathcal{Y}}$$

Now, by Proposition \ref{propnathomsmjoin} for each $S$ in $\mathcal{P}_1(\underline{n})$ there is a space $Z_S$ and natural homotopy 
equivalences
$$\hat{\mathcal{Y}}(S)\longleftarrow Z_S\longrightarrow\Sigma\mathcal{Y}(S)$$
If we define $\mathcal{Z}:\mathcal{P}_1(\underline{n})\longrightarrow Top$ as $\mathcal{Z}(S)=Z_S$, then the induced maps
$$\hocolim_{\mathcal{P}_1(\underline{r})}\hat{\mathcal{Y}}\longleftarrow\hocolim_{\mathcal{P}_1(\underline{r})}\mathcal{Z} \longrightarrow\hocolim_{\mathcal{P}_1(\underline{r})}\Sigma\mathcal{Y}$$
are homotopy equivalences.
\end{proof}

From the last theorem and Theorem \ref{polysmash} we get the following corollary:

\begin{cor}\label{corpolyjoinnullpairs}
If $(\underline{X},\underline{A})=\{(X_1,A_1),\dots,(X_n,A_n)\}$ is a family of  pairs of CW-complexes such that the inclusion 
$\Sigma A_i\longhookrightarrow \Sigma X_i$ is null-homotopic for all $i$, then 
$$\Sigma\aste{Z}_K(\underline{X},\underline{A})\simeq\bigvee_{\sigma\in K}\Sigma \mathrm{lk}(\sigma)\;\ast\aste{D}(\sigma).$$
\end{cor}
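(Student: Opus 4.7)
The plan is to chain the two theorems of this section in a direct way. By Theorem \ref{teosmashjoinpol}, the suspension of the polyhedral join is already identified with a polyhedral smash product:
\[
\Sigma \aste{Z}_K(\underline{X},\underline{A}) \simeq \hat{Z}_K(\underline{\Sigma X},\underline{\Sigma A}).
\]
The hypothesis of the corollary is precisely what is needed to apply Theorem \ref{polysmash} to the suspended family $(\underline{\Sigma X},\underline{\Sigma A})$, which yields
\[
\hat{Z}_K(\underline{\Sigma X},\underline{\Sigma A}) \simeq \bigl(K * \hat{D}_\Sigma(\emptyset)\bigr) \vee \bigvee_{\sigma \in K} lk(\sigma) * \hat{D}_\Sigma(\sigma),
\]
where $\hat{D}_\Sigma(\sigma)$ denotes the smash factor built from the suspended pairs.

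Next, I would translate each $\hat{D}_\Sigma(\sigma) = \bigwedge_{i\in\sigma}\Sigma X_i \wedge \bigwedge_{i\notin\sigma}\Sigma A_i$ back into a suspension of the corresponding join factor $\aste{D}(\sigma)$. The key identity is the classical equivalence $\Sigma X \wedge \Sigma Y \simeq \Sigma(X * Y)$, coming from $X * Y \simeq \Sigma(X \wedge Y)$; iterating it over all $n$ factors produces $\hat{D}_\Sigma(\sigma) \simeq \Sigma\, \aste{D}(\sigma)$. Substituting this into the wedge decomposition and then pulling the suspension outside each join via the standard equivalence $L * \Sigma Z \simeq \Sigma(L * Z)$, applied with $L = K$ on the first summand and with $L = lk(\sigma)$ on the others, gives the formula in the statement.

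The main obstacle I expect is the bookkeeping when some of the $A_i$ are empty, exactly the issue that forced the introduction of the sets $\rho(S)$ in the proof of Theorem \ref{teosmashjoinpol}. One has to interpret $\Sigma A_i$ consistently with the hypothesis of Theorem \ref{polysmash} and verify that both the identification $\hat{D}_\Sigma(\sigma) \simeq \Sigma\, \aste{D}(\sigma)$ and the absorption of the suspension into the join remain valid in that degenerate setting. Beyond this check, the rest of the argument is a routine formal manipulation of suspensions, smash products, and joins.
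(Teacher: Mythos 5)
Your proposal is correct and is exactly the argument the paper intends: the corollary is stated there with no separate proof beyond the phrase ``From the last theorem and Theorem \ref{polysmash}'', and your chain --- Theorem \ref{teosmashjoinpol}, then Theorem \ref{polysmash} for the suspended pairs, then the identities $\bigwedge_i \Sigma Y_i \simeq \Sigma\left(\bigast_i Y_i\right)$ and $L*\Sigma Z \simeq \Sigma(L*Z)$ --- fills in precisely the routine details. The degenerate case you flag is handled by the paper's convention $\Sigma\emptyset = \mathbb{S}^0$ (visible in the proof of Theorem \ref{teosuspf0}), under which join with $\emptyset$ and smash with $\mathbb{S}^0$ are both the identity, so the bookkeeping goes through.
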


Now we will see an example where the equivalence of the last corollary can not be desuspended. For this, 
we take a non null-homotopic map $\alpha:\mathbb{S}^9\longrightarrow\mathbb{S}^5$ and its 
mapping cylinder $M_\alpha$. We known this maps exists because $\pi_9(\mathbb{S}^5)\cong\mathbb{Z}_2$ 
and we also know that its suspension is null-homotopic because $\pi_{10}(\mathbb{S}^6)\cong0$ (see \citep[Table I, p. 186]{Toda_1963}). 
Taking $K$ to be the disjoint union of 
two points and $(\underline{X},\underline{A})=\{(M_\alpha,\mathbb{S}^9),(\mathbb{D}^{10},\emptyset)\}$ we have that the polyhedral 
join over $K$ has the homotopy type of 
$Y=\hocolim\left(\mathbb{D}^{10}\longhookleftarrow\mathbb{S}^9\longrightarrow\mathbb{S}^5\right)$. From last corollary, we known that 
$\Sigma Y\simeq\mathbb{S}^{11}\vee\mathbb{S}^6$. 
By construction $\pi_9(Y)\cong0$,
therefore we cannot desupend the formula, \textit{i.e.} $Y\nsimeq\mathbb{S}^{10}\vee\mathbb{S}^5$. A natural question is: When can 
we desuspend the formula of Corollary \ref{corpolyjoinnullpairs}? We will see some cases where under the hypothesis of 
Theorem \ref{polysmash} we can desuspend the formula.

\begin{cor}\label{corpoljoincone}
Let $L$ be a simplicial complex with vertex set $V(L)=\{v_1,\dots,v_n\}$ and take 
$K$ its cone with $v_0$ as the apex vertex. If $(\underline{X},\underline{A})=\{(X_0,A_0),(X_1,A_1),\dots,(X_n,A_n)\}$ 
is a family of  pairs of CW-complexes such that for each $i>0$ the inclusion $\Sigma A_i\longhookrightarrow\Sigma X_i$ is null-homotopic or 
$A_i=\emptyset$  and $X_0$ is connected, then 
$$\aste{Z}_K(\underline{X},\underline{A})\simeq\bigvee_{\sigma\in L}\mathrm{lk}_L(\sigma)*\aste{D}(\sigma\cup\{v_0\}).$$
\end{cor}
\begin{proof}
We choose a vertex of $X_0$, then we have that 
$$\aste{Z}_K(\underline{X},\underline{A})=X_0*\aste{Z}_L(\underline{X},\underline{A})\simeq$$
$$X_0\wedge\Sigma\aste{Z}_L(\underline{X},\underline{A})\simeq X_0\wedge\left(\bigvee_{\sigma\in L}\Sigma \mathrm{lk}_L(\sigma)\;\ast\aste{D}(\sigma)\right)\simeq$$
$$\left(\bigvee_{\sigma\in L}X_0\wedge\Sigma\mathrm{lk}_L(\sigma)\;\ast\aste{D}(\sigma)\right)\simeq\bigvee_{\sigma\in L}\mathrm{lk}_L(\sigma)*\aste{D}(\sigma\cup\{v_0\}).$$
Now, for any simplex $\sigma$ that does not contain $v_0$, we have that $\mathrm{lk}_K(\sigma)\simeq*$ and  
$\mathrm{lk}_K(\sigma\cup\{v_0\})=\mathrm{lk}_L(\sigma)$.
\end{proof}

\begin{cor}\label{corsuspcompl}
Let $L$ be a simplicial complex with vertex set $V(L)=\{v_1,\dots,v_n\}$ and we take 
$K=\displaystyle\left(\bigvee_{l-1}\mathbb{S}^0\right)*L$ with $v_{n+1},\dots,v_{n+l}$ the new vertices with $l\geq2$. 
If $(\underline{X},\underline{A})$ 
is a family of  pairs of CW-complexes such that for each $i$ the inclusion $A_i\longhookrightarrow X_i$ is null-homotopic or 
$A_i=\emptyset$  and $X_j$ is connected for all $j>n$, then 
$$\aste{Z}_K(\underline{X},\underline{A})\simeq\bigvee_{\sigma\in L}\left(\bigvee_{l-1}\Sigma\mathrm{lk}_L(\sigma)*\aste{D}(\sigma)\vee\bigvee_{j=1}^l\mathrm{lk}_L(\sigma)*\aste{D}(\sigma\cup\{v_{n+j}\})\right)$$
$$\simeq\bigvee_{\sigma\in K}\mathrm{lk}(\sigma)*\aste{D}(\sigma).$$
\end{cor}
\begin{proof}
For $l=2$, $\aste{Z}_K(\underline{X},\underline{A})=Y_1\cup Y_2$ where $Y_i$ is the polyhedral join of the cone of $L$ with apex vertex 
$v_{n+i}$ and the same vertex set as $K$. Then $Y_1\cap Y_2=\aste{Z}_{\hat{L}}(\underline{X},\underline{A})$, where $\hat{L}$ comes 
from $L$ by taking $v_{n+1},v_{n+2}$ as phantom vertices. Then $Y_1\cap Y_2\longhookrightarrow Y_i$ is null-homotopic and by Lemma 
\ref{homocolimpegado} we have that $\aste{Z}_K(\underline{X},\underline{A})\simeq Y_1\vee Y_2\vee\Sigma Y_1\cap Y_2$ and by Corollaries 
\ref{corpolyjoinnullpairs} and \ref{corpoljoincone} we obtain the result. 

As before, for $l\geq3$ we have that $\aste{Z}_K(\underline{X},\underline{A})=Y_1\cup\cdots\cup Y_l$ and assuming that the result its 
true for $l-1$, we have that $\left(Y_1\cup\dots\cup Y_{l-1}\right)\cap Y_l=\hat{L}$ define as before and that the inclusion are 
null-homotopic, by Lemma \ref{homocolimpegado} and Corollary \ref{corpoljoincone} the result is true.

To see the last equivalence, we take $\sigma$ in $K$. If $\sigma$ is in $L$, then 
$$\mathrm{lk}_K(\sigma)=\{\{v_{n+1}\}\}*\mathrm{lk}_L(\sigma)\cup\cdots\cup\{\{v_{n+l}\}\}*\mathrm{lk}_L(\sigma)\simeq\bigvee_{l-1}\Sigma\mathrm{lk}_L(\sigma).$$
If $\sigma$ is not in $L$, then $v_{n+j}$ is a vertex of $\sigma$ for only one $1\leq j\leq l$ and 
$\mathrm{lk}_K(\sigma)=\mathrm{lk}_L(\sigma-\{v_{n+j}\})$.
\end{proof}

Now we proceed with the calculation of the homotopy type of a specific family of polyhedral joins that will be 
needed in the last section.

\begin{prop}\label{propskdelta}
For $d\leq n$,
$$\aste{Z}_{_{\mathrm{sk}_d\Delta^n}}\left(\bigvee_{r-1}\mathbb{S}^0,\emptyset\right)\simeq\bigvee_{f_d(r,n)}\mathbb{S}^d,$$
where
$$f_d(r,n)=\sum_{i=0}^{d+1}(-1)^{d+1-i}\binom{n+1}{i}r^i.$$
\end{prop}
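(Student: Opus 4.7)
The plan is to identify $\aste{Z}_{sk_d\Delta^n}(X,\emptyset)$, where $X=\bigvee_{r-1}\mathbb{S}^0$, as a $d$-skeleton of a join of discrete spaces, verify that it is $(d-1)$-connected, and then count the $d$-spheres using the Euler characteristic. Under the standard convention that a factor $A_i=\emptyset$ removes the $i$-th coordinate from the join, for each $\sigma\in sk_d\Delta^n$ the space $J(\sigma)$ is the $|\sigma|$-fold join $X^{*|\sigma|}$, realized as the subspace of the ambient join $X^{*(n+1)}$ on which the join coordinates indexed outside $\sigma$ vanish. Taking the union over all $\sigma$ then yields
$$\aste{Z}_{sk_d\Delta^n}(X,\emptyset)=sk_d\bigl(X^{*(n+1)}\bigr),$$
the $d$-skeleton of the simplicial join of $n+1$ copies of the discrete $r$-point space, which has exactly $\binom{n+1}{k+1}r^{k+1}$ simplices in each dimension $k\leq d$.

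Since $X^{*(n+1)}\simeq\Sigma^{n}X^{\wedge(n+1)}\simeq\bigvee_{(r-1)^{n+1}}\mathbb{S}^{n}$ is $(n-1)$-connected, cellular approximation shows that the inclusion of the $d$-skeleton into $X^{*(n+1)}$ induces isomorphisms on $\pi_k$ for $k<d$. Under the hypothesis $d\leq n$ this makes $\aste{Z}_{sk_d\Delta^n}(X,\emptyset)$ itself $(d-1)$-connected. Its cellular chain complex vanishes above dimension $d$, so the integral homology is concentrated in degrees $0$ and $d$; moreover $H_d$ is free abelian as a subgroup of the free group of $d$-chains.

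I would then conclude that the space is a wedge of $d$-spheres. For $d\geq 2$ the space is simply connected, so Hurewicz gives $\pi_d\cong H_d$, a choice of basis lifts to a map from $\bigvee\mathbb{S}^d$ inducing an isomorphism on all integral homology, and Whitehead's theorem upgrades this to a homotopy equivalence. For $d=1$ the conclusion is the classical fact that a connected graph is a wedge of circles, and for $d=0$ the space is simply a discrete set.

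Finally, the count $N$ of wedge summands follows from the identity $(-1)^d N=\tilde\chi=\chi-1$:
$$N=(-1)^d\left(\sum_{k=0}^{d}(-1)^{k}\binom{n+1}{k+1}r^{k+1}-1\right)=\sum_{i=0}^{d+1}(-1)^{d+1-i}\binom{n+1}{i}r^{i}=f_d(r,n),$$
after reindexing $i=k+1$ and absorbing $-1=-\binom{n+1}{0}r^{0}$ as the missing $i=0$ term. The most delicate point is the initial identification in the first paragraph, which relies on correctly interpreting the convention for an empty $A_i$; after that, the remaining steps are standard.
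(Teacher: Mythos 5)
Your proof is correct, and it takes a genuinely different route from the paper's. The paper argues by double induction on $d$ and $n$: splitting on the first vertex of $\Delta^n$ gives the decomposition of $\aste{Z}_{sk_d\Delta^n}(X,\emptyset)$ as $\left(X*\aste{Z}_{sk_{d-1}\Delta^{n-1}}(X,\emptyset)\right)\cup\aste{Z}_{sk_d\Delta^{n-1}}(X,\emptyset)$ with intersection $\aste{Z}_{sk_{d-1}\Delta^{n-1}}(X,\emptyset)$, applies the homotopy-pushout Lemma \ref{homocolimpegado} (both inclusions being null-homotopic since a $(d-1)$-dimensional wedge of spheres maps trivially into a $(d-1)$-connected space) to obtain the recursion $f_d(r,n)=rf_{d-1}(r,n-1)+f_d(r,n-1)$, and then proves the closed formula by a second induction using Pascal's rule. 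You instead identify the polyhedral join globally as $sk_d\left(X^{\ast(n+1)}\right)$, the $d$-skeleton of the join of $n+1$ copies of the $r$-point discrete space; this identification is sound, since under the convention $A*\emptyset=A$ (which you correctly flag as the delicate point) a simplex of the ambient join lies in the union $\bigcup_{\sigma}J(\sigma)$ exactly when its support, viewed as a face of $\Delta^n$, has dimension at most $d$. From there, $(d-1)$-connectedness follows by cellular approximation from the $(n-1)$-connectedness of the full join, the wedge-of-spheres conclusion follows from Hurewicz and Whitehead (with $d\leq1$ handled separately, as you do), and the count falls out of the reduced Euler characteristic via the simplex count $\binom{n+1}{k+1}r^{k+1}$, whose reindexing to $f_d(r,n)$ you carry out correctly. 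Your approach buys an induction-free derivation in which the closed formula appears directly rather than being guessed and verified, and it isolates exactly where the hypothesis $d\leq n$ is used; the paper's approach stays within the homotopy-colimit machinery it uses throughout and produces the recursion naturally, which is also what it reuses in later computations. Both arguments are complete.
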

\begin{proof}
We will set $X=\displaystyle\bigvee_{r-1}\mathbb{S}^0$. Now, for $d=n$, 
$$\aste{Z}_{_{\mathrm{sk}_d\Delta^n}}\left(X,\emptyset\right)=\aste{Z}_{_{\Delta^n}}\left(X,\emptyset\right)=\bigast_{i=1}^{n+1}\left(\bigvee_{r-1}\mathbb{S}^0\right)\simeq\bigvee_{(r-1)^{n+1}}\mathbb{S}^n.$$
We will use induction on $d$ and for each $d$, induction on $n$. For $d=1$, $\aste{Z}_{_{\mathrm{sk}_1\Delta^n}}\left(X,\emptyset\right)$
is the complete $(n+1)$-partite graph with $r$ vertices in each partition. Therefore:
$$\aste{Z}_{_{\mathrm{sk}_d\Delta^n}}\left(X,\emptyset\right)\simeq\bigvee_{\binom{n+1}{2}r^2-(n+1)r+1}\mathbb{S}^1.$$
Now, assume it is true for $d-1$ and any $n$ and also for $(d,n-1)$; and consider the case $(d,n)$. By case analysis on the first vertex of $\Delta^n$, we obtain:
$$\aste{Z}_{_{\mathrm{sk}_d\Delta^n}}\left(X,\emptyset\right)=\left[\left(\bigvee_{r-1}\mathbb{S}^0\right)*\aste{Z}_{_{\mathrm{sk}_{d-1}\Delta^{n-1}}}\left(X,\emptyset\right)\right]\bigcup\aste{Z}_{_{\mathrm{sk}_d\Delta^{n-1}}}\left(X,\emptyset\right).$$
Since the intersection of those two subcomplexes is $\aste{Z}_{_{\mathrm{sk}_{d-1}\Delta^{k-1}}}\left(X,\emptyset\right)$, we can conclude that $\aste{Z}_{_{\mathrm{sk}_d\Delta^n}}\left(X,\emptyset\right)$ is homotopy equivalent to the homotopy pushout of 
\begin{equation*}
    \xymatrix{
    \displaystyle\bigvee_{(r-1)f_d(r,n)}\mathbb{S}^d \ar@{<-^)}[r]& \displaystyle\bigvee_{f_d(r,n)}\mathbb{S}^{d-1} \ar@{^(->}[r]& \displaystyle\bigvee_{f_{d+1}(r,n)}\mathbb{S}^d.
    }
\end{equation*}
Both inclusions in that diagram must be null-homotopic, so Lemma \ref{homocolimpegado} applies, and we obtain the desired homotopy type: a wedge of $f_d(r,n)$
copies of $\mathbb{S}^d$, where
$$f_{d}(r,n):=r f_{d-1}(r,n-1)+f_d(r,n-1).$$

Now we need only prove the stated formula for $f_d(r,n)$. For $d=1$ or $d=n$ we know it is true. Assume the formula is true for $d-1$ and any $n$ and also for $(d,n-1)$; and consider the case of $(d,n)$. Now,
$$f_d(r,n)=\sum_{i=0}^{d}(-1)^{d-i}\binom{n}{i}r^{i+1}+\sum_{i=0}^{d+1}(-1)^{d+1-i}\binom{n}{i}r^i$$
Reindexing the first sum from $i=1$ to $d+1$, and using a standard identity for binomial coefficients, we obtain the desired formula.
\end{proof}

\section{Polyhedral joins over independence complexes of graphs}
In this section we give some homotopy type decompositions for polyhedral joins over graph complexes, theses 
calculations will give us  the homotopy type of the independence complexes of lexicographic product for these families
with any graph which is one of the goals in the next section. We wanna point out that all of the computations for specific 
families of graphs will tell us that the formula of Corollary \ref{corpolyjoinnullpairs} is valid without suspensions.
As first example we have the following proposition. 
\begin{prop}
Let $M_q$ be the graph form by $q$ disjoint edges, then 
$$\aste{Z}_{\mathcal{F}_0(M_q)}(\underline{X},\underline{A})\simeq\bigvee_{\sigma\in\mathcal{F}_0(M_q)}\mathrm{lk}(\sigma)*\aste{D}(\sigma)$$
\end{prop}
\begin{proof}
For $q=1$ the result is clear. For $q\geq2$, is a direct consequence of Corollary \ref{corsuspcompl}.
\end{proof}

In \citep{Przytycki_2017} it was proved that $\mathcal{F}_0(G)\simeq\mathcal{F}_0(G-v)\vee\Sigma\mathcal{F}_0(G-N_G[v])$
for a graph $G$  with vertices $u,v$ such that $N_G[u]\subseteq N_G[v]$. The following proposition is a generalization of this 
result, which is obtained in the case $G\circ K_1$.
\begin{prop}\label{proppoljoincloseneigh}
Let $G$ be a graph with vertices $u,v$ such that $N_G[u]\subseteq N_G[v]$. Let $(\underline{X},\underline{A})$ be a family 
of CW pairs such that for every vertex $w$ of $G$ we have that either $A_w$ is empty or $A_w\longhookrightarrow X_w$ is 
null-homotopic. Taking the complexes $J=\mathcal{F}_0(G-N(v))$, $K=\mathcal{F}_0(G-N[v])$, $L=\mathcal{F}_0(G-v)$ which have $V(G)$ as 
vertex set, we have that:
\begin{enumerate}
    \item If $K\neq\{\emptyset\}$ or $A_w\neq\emptyset$ for some vertex $w$, then 
    $$\aste{Z}_{\mathcal{F}_0(G)}(\underline{X},\underline{A})\simeq\aste{Z}_{J}(\underline{X},\underline{A})\vee\Sigma \aste{Z}_{K}(\underline{X},\underline{A})\vee \aste{Z}_{L}(\underline{X},\underline{A})$$
    \item  If $K=\{\emptyset\}$ and the family is $(\underline{X},\underline{\emptyset})$, then 
    $$\aste{Z}_{\mathcal{F}_0(G)}(\underline{X},\underline{A})\simeq X_v\sqcup\aste{Z}_{L}(\underline{X},\underline{A})$$
\end{enumerate}
\end{prop}
\begin{proof}
We take the complexes $J=\mathcal{F}_0(G-N(v))$, $K=\mathcal{F}_0(G-N[v])$, $L=\mathcal{F}_0(G-v)$ with $V(G)$ as their
vertex set. Then
$$\aste{Z}_{\mathcal{F}_0(G)}(\underline{X},\underline{A})=\aste{Z}_{J}(\underline{X},\underline{A})\cup\aste{Z}_{L}(\underline{X},\underline{A})$$
and 
$$\aste{Z}_{J}(\underline{X},\underline{A})\cap\aste{Z}_{L}(\underline{X},\underline{A})=\aste{Z}_{K}(\underline{X},\underline{A})$$
If $K=\{\emptyset\}$ and the family is $(\underline{X},\underline{\emptyset})$, the result is clear because $N_G[v]=V(G)$ and 
$\aste{Z}_{K}(\underline{X},\underline{A})=\emptyset$. Assume $K\neq\{\emptyset\}$ or $A_w\neq\emptyset$ for some vertex $w$. 
Now, $\aste{Z}_{J}(\underline{X},\underline{A})$ is a space that looks like $A_u*X_v*Y$ and $\aste{Z}_{K}(\underline{X},\underline{A})$ 
looks like $A_u*A_v*Y$, therefore the inclusion is null-homotopic. Also, we can factor the other inclusion as  
$$\aste{Z}_{K}(\underline{X},\underline{A})\longhookrightarrow X_u*A_v*Y\longhookrightarrow\aste{Z}_{L}(\underline{X},\underline{A}),$$ 
thus this inclusion also is null-homotopic. The result follows by Lemma \ref{homocolimpegado}.
\end{proof}

We will apply last proposition together with the next lemma.

\begin{lem}\label{lemlkclosnghb}
Let $G$ be a graph with vertices $u,v$ such that $N_G[u]\subseteq N_G[v]$.
If $\sigma$ is a non-empty simplex of $\mathcal{F}_0(G)$ such that $\mathrm{lk}_{\mathcal{F}_0(G)}(\sigma)$ is not contractible, then:
\begin{itemize}
    \item If $v$ is in $\sigma$, then: $\sigma$ is a simplex of $\mathcal{F}_0(G-N_G(v))$, $\sigma$ is not a simplex of $\mathcal{F}_0(G-v)$
    $\mathrm{lk}_{\mathcal{F}_0(G)}(\sigma)\cong\mathrm{lk}_{\mathcal{F}_0(G-N_G(v))}(\sigma)$.
    \item If $v$ is not in $\sigma$ and $\{v\}\cup\sigma$ is not a simplex, then: $\sigma$ is not a simplex of $\mathcal{F}_0(G-N_G(v))$, $\sigma$ is a simplex of $\mathcal{F}_0(G-v)$ and
    $\mathrm{lk}_{\mathcal{F}_0(G)}(\sigma)\cong\mathrm{lk}_{\mathcal{F}_0(G-v)}(\sigma)$.
    \item If $v$ is not in $\sigma$ and $\{v\}\cup\sigma$ is a simplex, then: $\sigma$ is a simplex of $\mathcal{F}_0(G-N_G[v])$, $\mathrm{lk}_{\mathcal{F}_0(G-N_G(v))}(\sigma)$ is contractible and
    $\mathrm{lk}_{\mathcal{F}_0(G)}(\sigma)\simeq\mathrm{lk}_{\mathcal{F}_0(G-v)}(\sigma)\vee\Sigma\mathrm{lk}_{\mathcal{F}_0(G-N_G[v])}(\sigma)$.
\end{itemize}
\end{lem}
\begin{proof}
We know that 
$$\mathcal{F}_0(G)=\mathcal{F}_0(G-N_G(v))\cup\mathcal{F}_0(G-v)$$
Take $\sigma$ a non-empty simplex of $\mathcal{F}_0(G)$ such that $\mathrm{lk}_{\mathcal{F}_0(G)}(\sigma)$ is not contractible. If $v$ is in $\sigma$, then $\sigma$ is not in $\mathcal{F}_0(G-v)$. 
If $\tau$ is a simplex of $\mathcal{F}_0(G-v)$ such that $\tau$ is in $\mathrm{lk}_{\mathcal{F}_0(G)}(\sigma)$, then 
$\tau\cap N_G(v)=\emptyset$. Thus
$\mathrm{lk}_{\mathcal{F}_0(G)}(\sigma)\cong\mathrm{lk}_{\mathcal{F}_0(G)-N_G(v)}(\sigma)$. If $v$ is not in $\sigma$ and $\{v\}\cup\sigma$ is not a simplex, then $N_G(v)\cap\sigma$ is not 
empty. Therefore $\sigma$ is not in $\mathcal{F}_0(G-N_G(v))$  
and $\mathcal{F}_0(G-N_G(v))\cap\mathrm{lk}_{\mathcal{F}_0(G)}(\sigma)\subseteq\mathcal{F}_0(G-v)(\sigma)$. Thus
$\mathrm{lk}_{\mathcal{F}_0(G)}(\sigma)\cong\mathrm{lk}_{\mathcal{F}_0(G-v)}(\sigma)$. Lastly we assume that $v$ is not in $\sigma$ and 
$\{v\}\cup\sigma$ is a simplex. Then $\sigma$ is in $\mathcal{F}_0(G-N_G(v))\cap\mathcal{F}_0(G-v)$ and 
$$\mathrm{lk}_{\mathcal{F}_0(G)}(\sigma)=\colim\left(\mathrm{lk}_{\mathcal{F}_0(G-v)}(\sigma)\longhookleftarrow\mathrm{lk}_{\mathcal{F}_0(G-N_G[v])}(\sigma)\longhookrightarrow\mathrm{lk}_{\mathcal{F}_0(G-N_G(v))}(\sigma)\right).$$
The left inclusion can be factored as 
$$\mathrm{lk}_{\mathcal{F}_0(G-N_G[v])}(\sigma)\longhookrightarrow\{u\}*\mathrm{lk}_{\mathcal{F}_0(G-N_G[v])}(\sigma)\longhookrightarrow\mathrm{lk}_{\mathcal{F}_0(G-v)}(\sigma),$$
thus it is null-homotopic. Now $\mathrm{lk}_{\mathcal{F}_0(G-N_G(v))}(\sigma)$ is contractible beause is a cone with apex vertex $v$, then 
by Lemma \ref{homocolimpegado} the link has the desired homotopy type.
\end{proof}

\begin{theorem}\label{poljointrees}
Let $T$ be a tree and $(\underline{X},\underline{A})$ be a family of CW pairs such that for every vertex $w$ of $T$ 
we have that either $A_w$ is empty or $A_w\longhookrightarrow X_w$ is null-homotopic. Then
\begin{enumerate}
    \item If $T$ is a star with $\underline{A}=\underline{\emptyset}$ and $v$ is the central vertex, then 
    $$\aste{Z}_{\mathcal{F}_0(T)}(\underline{X},\underline{A})=X_v\sqcup\bigast_{u\in N_T(v)}X_u$$
    \item If $T$ is not a star or there is a vertex $w$ with $A_w\neq\emptyset$, then
    $$\aste{Z}_{\mathcal{F}_0(T)}(\underline{X},\underline{A})\simeq\bigvee_{\sigma\in\mathcal{F}_0(T)}\mathrm{lk}(\sigma)*\aste{D}(\sigma)$$
\end{enumerate}
\end{theorem}
\begin{proof}
If $T=K_1,n$ with $v$ its central vertex, then 
$$\aste{Z}_{\mathcal{F}_0(T)}(\underline{X},\underline{A})\simeq\hocolim\left(X_v*\bigast_{u\neq v}A_u\longhookleftarrow\bigast_{u\in V(T)}A_u\longhookrightarrow A_v*\bigast_{u\neq v}X_u\right)$$
The result is clear if $\underline{A}=\underline{\emptyset}$. Assume there is a vertex $w$ such that $A_w\neq\emptyset$. Thus 
$$\aste{Z}_{\mathcal{F}_0(T)}(\underline{X},\underline{A})\simeq X_v*\bigast_{u\neq v}A_u\vee\Sigma\left(\bigast_{u\in V(T)}A_u\right)\vee A_v*\bigast_{u\neq v}X_u$$
Now, in $\mathcal{F}_0(T)$ the only simplices with a non-contractible link are $\emptyset,\{v\}$ and $N_G(v)$, and their respective links have the homotopy type of $\mathbb{S}^0,\emptyset$ and $\emptyset$
respectively. From all these we obtain the result.

Notice that if a tree $T$ is not a star, then $T$ has at least $4$ vertices. Taking $\underline{A}\neq\underline{\emptyset}$, we assume the result is true for any tree with at most 
$n$ vertices and we take $T$ to be a tree with $n+1$ vertices, which we can assume is not a star. 
Let $u$ be a leaf and $v$ be its only neighbor. By Proposition \ref{proppoljoincloseneigh}:
$$\aste{Z}_{\mathcal{F}_0(T)}(\underline{X},\underline{A})\simeq \aste{Z}_{J}(\underline{X},\underline{A})\vee\Sigma \aste{Z}_{K}(\underline{X},\underline{A})\vee \aste{Z}_{L}(\underline{X},\underline{A})$$
where $J=\mathcal{F}_0(T-N(v))$, $K=\mathcal{F}_0(T-N[v])$, $L=\mathcal{F}_0(T-v)$ and $V(T)$ is their vertex set. By induction using Lemma \ref{poljoinovjoin}, Corollary \ref{corpolyjoinnullpairs}, 
Corollary \ref{corpoljoincone} and Lemma \ref{lemlkclosnghb} we only need to check the link of the simplex $\emptyset$. Now, 
$$\mathrm{lk}(\emptyset)=\mathcal{F}_0(T)\simeq L\vee\Sigma K.$$
\end{proof}

In the next theorem we calculate the homotopy type of polyhedral joins over the independence complex of a power of a path. 
The $r$-th power of $P_n$ is the graph $P_n^r$ which has the same vertex set as $P_n$ and two distinct vertices are adjacent if their distance is at most $r$, \textit{i.e.} the only  
path between them in $P_n$ has at most $r$ vertices. We will take $\{v_1,\dots,v_n\}$ as the vertex set of $P_n$, taking the obvious edge set we have that the distance between $v_i$ and $v_j$ is $|i-j|$.
Now we want to point out that because the independence number of $P_n^r$ is
$$\alpha(P_n^r)=\left\lceil\frac{n}{r+1}\right\rceil,$$
for $n\leq2r+2$ the complex $\mathcal{F}_0(P_n^r)$ has dimension at most $1$, more precisely for $r+2\leq n\leq2r+2$
$$\mathcal{F}_0(P_n^r)\cong H_{n,r}\sqcup lK_1$$
where $l=|\{v_j:\;n-r\leq j\leq r+1\}|$ and, $H_{n,r}$ is the bipartite graph with vertex set $\{v_1,\dots,v_{n-r-1}\}\cup\{v_{r+2},\dots,v_{n}\}$ 
and $v_iv_j$ is an edge if $|j-i|\geq r+1$.
\begin{theorem}\label{poljoinpathpow}
For $P_n^r$ let $(\underline{X},\underline{A})$ be a family of CW pairs such that for every vertex $w$ of $T$ 
we have that either $A_w$ is empty or $A_w\longhookrightarrow X_w$ is null-homotopic. Then
\begin{enumerate}
    \item For $n\leq r+1$:
    \begin{enumerate}
        \item If $\underline{A}=\underline{\emptyset}$, then
        $\displaystyle\aste{Z}_{\mathcal{F}_0(P_n^r)}(\underline{X},\underline{\emptyset})=\bigsqcup_{i=1}^nX_{v_i}.$
        \item If there is a $j\in\underline{n}$ such that $A_j\neq\emptyset$, then 
        $$\displaystyle\aste{Z}_{\mathcal{F}_0(P_n^r)}(\underline{X},\underline{A})\simeq\bigvee_{i=1}^nX_{v_i}\vee\bigvee_{n-1}\Sigma\left(\bigast_{u\in V(P_n)}A_u\right).$$
    \end{enumerate}
    \item For $r+2\leq n\leq2r+2$
    \begin{enumerate}
        \item If $\underline{A}=\underline{\emptyset}$, then 
        $\displaystyle\aste{Z}_{\mathcal{F}_0(P_n^r)}(\underline{X},\underline{\emptyset})=\displaystyle\aste{Z}_{H_{n,r}}(\underline{X},\underline{\emptyset})\sqcup\bigsqcup_{n-r\leq i\leq r+1}X_{v_i}.$
        \item If there is a $j\in\underline{n}$ such that $A_j\neq\emptyset$, then
        $$\aste{Z}_{\mathcal{F}_0(P_n^r)}(\underline{X},\underline{A})\simeq\displaystyle\aste{Z}_{H_{n,r}}(\underline{X},\underline{A})\vee\bigvee_{n-r\leq i\leq r+1}X_{v_i}\vee\bigvee_{n-r\leq i\leq r}\Sigma\left(\bigast_{u\in V(P_n)}A_u\right).$$
    \end{enumerate}
    \item For $n\geq2r+3$ 
    $$\aste{Z}_{\mathcal{F}_0(P_n^r)}(\underline{X},\underline{A})\simeq\bigvee_{\sigma\in\mathcal{F}_0(P_n^r)}\mathrm{lk}(\sigma)*\aste{D}(\sigma)$$
\end{enumerate}
\end{theorem}
\begin{proof}
For $r=1$, $P_n^1=P_n$ is a particular case of Theorem \ref{poljointrees}.
Assume $r\geq2$. For $n\leq2r+2$ the result is clear by the observation previous to the Theorem and doing the respective calculation 
using Lemma \ref{homocolimpegado}. We assume that $n\geq2r+3$.

We apply Proposition \ref{proppoljoincloseneigh} to the vertices $v_1,v_{r+1}$, then to the vertices $v_1,v_{r}$ in the graph 
$P_n^r-v_{r+1}$ and so on until we apply it to the the vertices $v_1,v_2$ in the graph $P_n^r-v_{r+1}-\cdots-v_3$. 
In all the steps we always take $V(P_n^r)$ as the vertex set of all of these complexes. Then, we have the following
$$\aste{Z}_{\mathcal{F}_0(P_n^r)}(\underline{X},\underline{A})\simeq\bigvee_{i=2}^{r+1}\Sigma\aste{Z}_{K_i}(\underline{X},\underline{A})\vee\bigvee_{j=1}^{r+1}\aste{Z}_{L_j}(\underline{X},\underline{A})$$
where $K_i=\mathcal{F}_0(P_n^r-N[v_i])$, $L_j=\mathcal{F}_0(P_n^r-N(v_i))$ and all have the same vertex set as $P_n^r$.
By Corollaries \ref{corpolyjoinnullpairs} and \ref{corpoljoincone} we have that 
$$\aste{Z}_{\mathcal{F}_0(P_n^r)}(\underline{X},\underline{A})\simeq\bigvee_{i=2}^{r+1}\bigvee_{\sigma\in K_i}\Sigma\mathrm{lk}_{K_i}(\sigma)*\aste{D}(\sigma)\vee\bigvee_{j=1}^{r+1}\bigvee_{\sigma\in K_j}\mathrm{lk}_{K_j}(\sigma)*\aste{D}(\sigma\cup\{v_j\})$$
By Lemma \ref{lemlkclosnghb} we only need to check the link of $\emptyset$. Now 
$$\mathrm{lk}(\emptyset)=\mathcal{F}_0(P_n^r)\simeq\bigvee_{i=2}^{r+1}\Sigma K_i.$$
\end{proof}

Given a graph $G$ and $\sigma$ an independent set of $G$, the \textit{star cluster} of $\sigma$ is the complex 
$$SC(\sigma)=\bigcup_{v\in\sigma}\mathcal{F}_0(G-N_G(v))$$
In \citep{barmak} it is shown that $\mathcal{F}_0(G)\simeq\Sigma\mathcal{F}_0(G-N_G(v))\cap SC(N_G(v))$ if $N_G(v)$ is an independent 
set. The next lemma is a generalization of this result.

\begin{lem}\label{lemstrclstr}
Let $G$ be a graph with a vertex $v$ such that it is not contained in any triangle. 
Take $(\underline{X},\underline{A})$ a family of CW pairs such that for every vertex $w$ of $G$ we have that $A_w$ is empty or that 
$A_w\longhookrightarrow X_w$ is null-homotopic. Take 
$\iota\colon\aste{Z}_{L}(\underline{X},\underline{A})\longhookrightarrow\aste{Z}_{K}(\underline{X},\underline{A})$, where
$J=\mathcal{F}_0(G-N_G(v))$, $K=SC(N_G(v))$, $L=J\cap K$ and all of these complexes have $V(G)$ as vertex set. Then
$$\aste{Z}_{\mathcal{F}_0(G)}(\underline{X},\underline{A})\simeq\aste{Z}_{J}(\underline{X},\underline{A})\vee\mathrm{hocofiber}(\iota).$$
If $v$ has degree $2$, then
$$\aste{Z}_{\mathcal{F}_0(G)}(\underline{X},\underline{A})\simeq\aste{Z}_{J}(\underline{X},\underline{A})\vee\Sigma\aste{Z}_{L}(\underline{X},\underline{A})\vee\aste{Z}_{K}(\underline{X},\underline{A})$$
\end{lem}
\begin{proof}
We have that 
$$\aste{Z}_{\mathcal{F}_0(G)}(\underline{X},\underline{A})=\aste{Z}_{J}(\underline{X},\underline{A})\cup\aste{Z}_{K}(\underline{X},\underline{A})$$
Now, $\aste{Z}_{L}(\underline{X},\underline{A})\longhookrightarrow\aste{Z}_{J}(\underline{X},\underline{A})$ is null-homotopic 
because $\aste{Z}_{L}(\underline{X},\underline{A})$ is a space which looks like $A_v*Y$ and $\aste{Z}_{J}(\underline{X},\underline{A})$
like $X_v*Y$ where $A_v$ is either empty or $A_v\longhookrightarrow X_v$ is null-homotopic. Therefore 
$$\aste{Z}_{\mathcal{F}_0(G)}(\underline{X},\underline{A})\simeq\aste{Z}_{J}(\underline{X},\underline{A})\vee\mathrm{hocofiber}(\iota).$$ 
Assume $N_G(v)=\{v_1,v_2\}$. Taking 
$\mathrm{st}(v_1),\mathrm{st}(v_2),\mathrm{st}(v)\cap\mathrm{st}(v_1),\mathrm{st}(v)\cap\mathrm{st}(v_2)$ as complexes 
with vertex set $V(G)$, we have that 
$$\aste{Z}_{K}(\underline{X},\underline{A})=\aste{Z}_{\mathrm{st}(v_1)}(\underline{X},\underline{A})\cup\aste{Z}_{\mathrm{st}(v_2)}(\underline{X},\underline{A})=W_1\cup W_2$$
$$\aste{Z}_{L}(\underline{L},\underline{A})=\aste{Z}_{\mathrm{st}(v)\cap\mathrm{st}(v_1)}(\underline{X},\underline{A})\cup\aste{Z}_{\mathrm{st}(v)\cap\mathrm{st}(v_2)}(\underline{X},\underline{A})=Y_1\cup Y_2.$$
We take $W_0=W_1\cap W_2$, $Y_0=Y_1\cap Y_2$. From the following diagram
\begin{equation*}
\xymatrix{
\ast \ar@{<-}[rr] \ar@{<-}[d]&& Y_1  \ar@{^(->}[rr]_{\theta_1}&& W_1\ar@{<-^)}[d]\\
\ast \ar@{<-}[rr] \ar@{->}[d]&& Y_0 \ar@{^(->}[rr]_{\theta_0} \ar@{^(->}[u] \ar@{_(->}[d]&& W_0 \ar@{^(->}[d]\\
\ast \ar@{<-}[rr] && Y_2 \ar@{^(->}[rr]_{\theta_2} && W_2
}
\end{equation*}
we have that
$$\hocolim\left(I\longleftarrow\hocolim\mathcal{T}\longhookrightarrow\hocolim\mathcal{S}\right)\cong$$ 
$$\hocolim\left(\mathrm{hocofiber}(\theta_1)\longhookleftarrow\mathrm{hocofiber}(\theta_0)\longhookrightarrow\mathrm{hocofiber}(\theta_2)\right)$$
where $\mathcal{T}$ is the middle column and $\mathcal{S}$ the right-most one \citep[see Theorem 3.7.15]{cubicalhomotopy}. 

Taking 
$\theta:\hocolim\mathcal{T}\longhookrightarrow\hocolim\mathcal{S}$, we have that 
$\mathrm{hocofiber}(\theta)\simeq\hocolim\left(I\longleftarrow\hocolim\mathcal{T}\longhookrightarrow\hocolim\mathcal{S}\right)$. 
Now 
$$\mathrm{hocofiber}(\iota)\cong\colim\left(\mathrm{hocofiber}(\theta_1)\longhookleftarrow\mathrm{hocofiber}(\theta_0)\longhookrightarrow\mathrm{hocofiber}(\theta_2)\right)$$
$$\simeq\hocolim\left(\mathrm{hocofiber}(\theta_1)\longhookleftarrow\mathrm{hocofiber}(\theta_0)\longhookrightarrow\mathrm{hocofiber}(\theta_2)\right),$$
thus $\mathrm{hocofiber}(\iota)\simeq\mathrm{hocofiber}(\theta)$.
Notice that the spaces of the diagram can be seen as:
$$Y_1=A_{v_1}\ast B_1,\;\;Y_2=A_{v_2}\ast B_2,\;\;Y_0=A_{v_1}\ast A_{v_2}\ast B_0,$$
$$W_1=X_{v_1}\ast B_1',\;\;W_2=X_{v_2}\ast B_2',\;\;W_0=X_{v_1}\ast X_{v_2}\ast B_0.$$
where 
$$B_i'=\bigast_{w\in N_G(v_i)}A_w*\aste{Z}_{\mathcal{F}_0(G-N_G[v_i])}(\underline{X},\underline{A}),\;\;B_i=\bigast_{w\in N_G(v_i)}A_w*A_{v_{i+1}}*\aste{Z}_{\mathcal{F}_0(G-N_G[v_i])}(\underline{X},\underline{A})$$
$B_0'=B_1'\cap B_2'$, $B_0=B_1\cap B_2$ and the vertex set of $\mathcal{F}_0(G-N_G[v_i])$ is $V(G)-N_G[v_i]$.
We will show that $\theta$ is null-homotopic. There are three possible cases.
If $A_{v_1}=A_{v_2}=\emptyset$, we take $x_i$ a vertex of $X_{v_i}$ and define the spaces $Z_i=\{x_i\}*B_i'$ and 
$Z_0=\{\{v_1\},\{v_2\}\}*B_0'$. We factorize $\theta$ through the homotopy push-out 
$$\hocolim\left(Z_1\longhookleftarrow Z_0\longhookrightarrow Z_2\right)$$
taking the diagram 
\begin{equation*}\tag{$f$}\label{facttheta}
\xymatrix{
Y_1 \ar@{<-^)}[rr] \ar@{^(->}[d]&& Y_0 \ar@{^(->}[rr] \ar@{^(->}[d]&& Y_2 \ar@{^(->}[d]\\
Z_1 \ar@{<-^)}[rr] \ar@{^(->}[d]&& Z_0 \ar@{^(->}[rr] \ar@{^(->}[d]&& Z_2 \ar@{^(->}[d]\\
W_1 \ar@{<-^)}[rr] && W_0 \ar@{^(->}[rr]&& W_2
}
\end{equation*}
Notice that $\hocolim\left(Z_1\longhookleftarrow Z_0\longhookrightarrow Z_2\right)\simeq*$. Therefore $\theta$ is null-homotopic.

Assume $A_{v_1}\neq\emptyset\neq A_{v_2}$. By hypothesis $A_{v_i}\longhookrightarrow X_{v_i}$ is null-homotopic. We take the following 
commutative diagram
\begin{equation*}
\xymatrix{
A_{v_i} \ar@{^(->}[rr]^{j_0} \ar@{^(->}[dd]_{\iota_i}& & A_{v_i}\times I \ar@{->}[dd]^{h_{v_i}} \ar@{->}[ld]_{\iota_i\times1}\\
 & X_{v_i}\times I \ar@{-->}[dr]^{H_{v_i}}& \\
 X_{v_i} \ar@{^(->}[ru]^{j_0} \ar@{->}[rr]_{1}& & X_{v_i}
}
\end{equation*}
where $h_{v_i}$ is the homotopy between $A_{v_i}\longhookrightarrow X_{v_i}$ and the constant map $c(x)=x_i$, where 
$x_i$ is a vertex of $X_{v_i}$. We get a map $H_{v_i}:X_{v_i}\times I\longrightarrow X_{v_i}$ 
such that $H_{v_i}(x,0)=x$ and $=H_{v_i}(a,1)=x_i$ for all $a$ in $A_{v_i}$. 
We define $g_i:X_{v_i}\longrightarrow X_{v_i}$  as $H_{v_i}(x,1)$. Thus $g_i$ is homotopic to 
the identity and ${g_i}_{|_{A_{v_i}}}$ is a constant map for $i=1,2$. With these maps we can construct 
homotopy equivalences $f_1=g_1*1_{B_n'}$, $f_2=g_2*1_{B_2'}$ and $f_0=g_1*g_2*1_{B_0'}$ such that the following diagram is commutative up to homotopy

\begin{equation*}
\xymatrix{
\mathcal{R}_1=\mathcal{S}&W_1 \ar@{<-^)}[rr] \ar@{->}[d]^{f_1}&& W_0 \ar@{^(->}[rr] \ar@{->}[d]_{f_0}&& W_2 \ar@{->}[d]_{f_2}\\
\mathcal{R}_2&W_1 \ar@{<-}[rr]_{g_1*\iota_1} && W_0 \ar@{->}[rr]_{g_2*\iota_2} && W_2
}
\end{equation*}
with homotopies $H_1$ and $H_2$ for the first and second square respectively. Then there is a homotopy equivalence $\Lambda:\hocolim\mathcal{R}_1\longrightarrow\hocolim\mathcal{R}_2$, where:
\begin{itemize}
    \item For $x\in W_i$ with $i\neq0$, $\Lambda(x)=f_i(x)$.
    \item For $(x,t)\in W_0\times I$:
    \begin{enumerate}
        \item If $0\leq t\leq\frac{1}{3}$, then $\Lambda((x,t))=H_1(x,1-3t)$. 
        \item If $\frac{1}{3}\leq t\leq\frac{2}{3}$, then $\Lambda((x,t))=(f_0(x),3t-1)$.
        \item If $\frac{2}{3}\leq t\leq1$, then $\Lambda((x,t))=H_2(x,3t-2)$.
    \end{enumerate}
\end{itemize}
Notice that $\mathrm{Im}(\Lambda\circ\theta)\simeq*$, therefore $\theta$ is null-homotopic. 

Lastly assume $A_{v_1}\neq\emptyset=A_{v_2}$. Using a mix of the ideas of the previous two cases we will show that $\theta$ is null-homotopic. 
We take $x_2$ a vertex of $X_{v_2}$ and $g_1$ as before, we define 
$Z_0=\{x_2\}*B_0'$, $Z_2=\{x_2\}*B_2'$ and take $Z_1=W_1$.
We define $f_1$ as before and $f_0=g_1*1_{\{v_2\}}*1_{B_0'}$ . We can factorize $\theta$ as in diagram (\ref{facttheta}). We take the following homotopy commutative diagram
\begin{equation*}
\xymatrix{
W_1 \ar@{<-^)}[rr] \ar@{->}[d]^{f_1}&& W_0 \ar@{^(->}[rr] \ar@{->}[d]_{f_0}&& W_2 \ar@{->}[d]_{1_{W_2}}\\
W_1 \ar@{<-}[rr]_{g_1*\iota_1} && W_0 \ar@{^(->}[rr] && W_2
}
\end{equation*}
We can construct $\Lambda$ as before and we get that $\mathrm{Im}(\Lambda\circ\theta)\simeq*$, thus $\theta$ is null-homotopic.

Regardless of the case we have that $\mathrm{hocofiber}(\iota)\simeq\Sigma\hocolim\mathcal{T}\vee\hocolim\mathcal{S}$. 
\end{proof}
We will apply last lemma together with the next two lemmas.

\begin{lem}\label{lemstarclsindgrp}
Let $G$ be a graph with $v$ a vertex of degree $2$ such that  $N_G(v)=\{v_1,v_2\}$ is an independent set. 
We take the graph $H$ obtain from $G$ by 
first erasing $N_G(v_1)\cap N_G(v_2)$ from $G$, and next adding the edges $u_1u_2$ for all $u_1\in N_G(v_1)-N_G(v_2)$ and 
$u_2\in N_G(v_2)-N_G(v_1)$. Then 
$$\mathcal{F}_0(G-N_G(v_1))\cup\mathcal{F}_0(G-N_G(v_2))=\mathcal{F}_0(H)$$
$$\mathcal{F}_0(G-N_G(v))\cap\left(\mathcal{F}_0(G-N_G(v_1))\cup\mathcal{F}_0(G-N_G(v_2))\right)=\mathcal{F}_0(H-v_1-v_2)$$
\end{lem}
\begin{proof}
By construction $\mathcal{F}_0(G-N_G(v_1))\cup\mathcal{F}_0(G-N_G(v_2))\subseteq\mathcal{F}_0(H)$. Take $\sigma$ in $H$:
\begin{itemize}
    \item If $v_1$ is in $\sigma$, then $\sigma\cap N_G(v_1)=\emptyset$ and $\sigma$ is in $\mathcal{F}_0(G-N_G(v_1))$.
    \item If there is a $u$ in $\sigma\cap N_G(v_1)$, then $\sigma\cap N_G(v_2)=\emptyset$ and $\sigma$ is a simplex of 
    $\mathcal{F}_0(G-N_G(v_2))$.
    \item If $\sigma\cap N_G[v_1]=\emptyset$, there are three possibilities: $v_2$ is in $\sigma$; $\sigma\cap N_G(v_2)$ is non-empty; and 
    $\sigma\cap N_G[v_2]=\emptyset$. The first two cases are analogous to what we have done. In the last case $\sigma$ is in 
    $\mathcal{F}_0(G-N_G(v_1))\cap\mathcal{F}_0(G-N_G(v_2))$.
\end{itemize}
The second part of the statement is clear.
\end{proof}

\begin{lem}\label{lemlkstarcls}
Let $G$ be a graph with $v$ a vertex of degree $2$ such that  $N_G(v)=\{v_1,v_2\}$ is an independent set. Take 
$H$ as in the Lemma \ref{lemstarclsindgrp}. If $\sigma$ is a non-empty simplex of $\mathcal{F}_0(G)$ such that 
$\mathrm{lk}_{\mathcal{F}_0(G)}(\sigma)$ is not contractible, then:
\begin{itemize}
    \item If $v$ is in $\sigma$, then: $\sigma$ is a simplex of $\mathcal{F}_0(G-N_G(v))$, $\sigma$ is not a simplex of $\mathcal{F}_0(H)$ and 
    $\mathrm{lk}_{\mathcal{F}_0(G)}(\sigma)\cong\mathrm{lk}_{\mathcal{F}_0(G-N_G(v))}(\sigma)$.
    \item If $v$ is not in $\sigma$ and $\{v\}\cup\sigma$ is a simplex, then:  $\sigma$ is a simplex of $\mathcal{F}_0(G)-N_G(v)\cap\mathcal{F}_0(H)=\mathcal{F}_0(H-v_1-v_2)$ and
    $\mathrm{lk}_{\mathcal{F}_0(G)}(\sigma)\simeq\mathrm{lk}_{\mathcal{F}_0(H)}(\sigma)\vee\Sigma\mathrm{lk}_{\mathcal{F}_0(H-v_1-v_2)}(\sigma)$.
    \item If $v$ is not in $\sigma$ and $\{v\}\cup\sigma$ is not a simplex, then: $\sigma$ is a simplex of $\mathcal{F}_0(H)$, $\sigma$ is not a simplex of $\mathcal{F}_0(G-N_G(v))$ and
    $\mathrm{lk}_{\mathcal{F}_0(G)}(\sigma)\cong\mathrm{lk}_{\mathcal{F}_0(H)}(\sigma)$.
\end{itemize}
\end{lem}
\begin{proof}
First notice that 
$$\mathcal{F}_0(G)=\mathcal{F}_0(G-N_G(v))\cup\mathcal{F}_0(G-N_G(v_1))\cup\mathcal{F}_0(G-N_G(v_2))=\mathcal{F}_0(G-N_G(v))\cup\mathcal{F}_0(H).$$

Take $\sigma$ is a simplex of $\mathcal{F}_0(G)$ such that $\mathrm{lk}_{\mathcal{F}_0(G)}(\sigma)$ is not contractible. If $v$ is in $\sigma$, then 
$\sigma\cap N_G(v)=\emptyset$ and for any $w$ in $N_G(v)$, $\sigma\cup\{w\}$ is not a simplex. Therefore $\sigma$ is not a simplex of $\mathcal{F}_0(H)$ and
any simplex $\tau$ of $\mathcal{F}_0(H)\cap\mathrm{lk}_{\mathcal{F}_0(G)}(\sigma)$ is contained in $\mathcal{F}_0(G-N_G(v))$. Thus
$\mathrm{lk}_{\mathcal{F}_0(G)}(\sigma)\cong\mathrm{lk}_{\mathcal{F}_0(G-N_G(v))}(\sigma)$.

Assume $v$ is not in $\sigma$ and $\{v\}\cup\sigma$ is a simplex, then $\sigma\cap N_G(v)$ is empty. Because $\mathrm{lk}_{\mathcal{F}_0(G)}(\sigma)$ is not contractible, then 
at lest one of $\{v_1\}\cup\sigma$, $\{v_2\}\cup\sigma$ and $\{v_1,v_2\}\cup\sigma$ must be a simplex. Therefore $\sigma$ is in 
$\mathcal{F}_0(G-N_G(v))\cap\mathcal{F}_0(H)=\mathcal{F}_0(H-v_1-v_2).$
From this, we get that 
$$\mathrm{lk}_{\mathcal{F}_0(G)}(\sigma)=\colim\left(\mathrm{lk}_{_{\mathcal{F}_0(G-N_G(v))}}(\sigma)\longhookleftarrow\mathrm{lk}_{_{\mathcal{F}_0(H-v_1-v_2)}}(\sigma)\longhookrightarrow\mathrm{lk}_{_{\mathcal{F}_0(H)}}(\sigma)\right)$$
Now, $\mathrm{lk}_{_{\mathcal{F}_0(G-N_G(v))}}(\sigma)\simeq*$ because it is a cone. 
By construction of $H$ we have that 
$$\mathrm{lk}_{_{\mathcal{F}_0(H)}}(\sigma)=\mathrm{lk}_{_{\mathcal{F}_0(G-N_G(v_1))}}(\sigma)\cup\mathrm{lk}_{_{\mathcal{F}_0(G-N_G(v_2))}}(\sigma),$$
 and that $\mathrm{lk}_{_{\mathcal{F}_0(G-N_G(v_1))}}(\sigma)\simeq*\simeq\mathrm{lk}_{_{\mathcal{F}_0(G-N_G(v_2))}}(\sigma)$. By Lemma \ref{homocolimpegado} we have that
 $\mathrm{lk}_{_{\mathcal{F}_0(H)}}(\sigma)\simeq\Sigma\mathrm{lk}_{_{\mathcal{F}_0(H-N_G(v_1)-N_G(v_2))}}(\sigma)$ and that 
 $\mathrm{lk}_{_{\mathcal{F}_0(H-v_1-v_2)}}(\sigma)\longhookrightarrow\mathrm{lk}_{_{\mathcal{F}_0(H)}}(\sigma)$ is null-homotopic. 
 Thus the link has the homotopy type desired.

Lastly assume $v$ is not in $\sigma$ and $\{v\}\cup\sigma$ is not a simplex. Then $\sigma\cap N_G(v)\neq\emptyset$, $\sigma$ is not a simplex of 
$\mathcal{F}_0(G-N_G(v))$ and $\mathrm{lk}_{\mathcal{F}_0(G)}(\sigma)\cap\mathcal{F}_0(G-N_G(v))\subseteq\mathcal{F}_0(H)$, thus 
$\mathrm{lk}_{\mathcal{F}_0(G)}(\sigma)\cong\mathrm{lk}_{\mathcal{F}_0(H)}(\sigma)$.
\end{proof}

\begin{theorem}\label{teopoljoincycl}
Take $C_n$, with $n\geq5$, and let $(\underline{X},\underline{A})$ be a family of CW pairs such that for every vertex $w$ of $C_n$ 
we have that either $A_w$ is empty or $A_w\longhookrightarrow X_w$ is null-homotopic. Then
$$\aste{Z}_{\mathcal{F}_0(C_n)}(\underline{X},\underline{A})\simeq\bigvee_{\sigma\in\mathcal{F}_0(C_n)}\mathrm{lk}(\sigma)*\aste{D}(\sigma)$$
\end{theorem}
\begin{proof}

By Lemma \ref{lemstrclstr} 
$$\aste{Z}_{\mathcal{F}_0(C_n)}(\underline{X},\underline{A})\simeq\aste{Z}_{J}(\underline{X},\underline{A})\vee\aste{Z}_{K}(\underline{X},\underline{A})\vee\Sigma\aste{Z}_{L}(\underline{X},\underline{A})$$
where $J=\mathcal{F}_0(C_n-N(v_1))$, $K=SC(N(v_1))$, $L=J\cap K$ and the three complexes have $V(C_n)$ as vertex set. The only non-contractible complex is $L$, thus 
$\mathrm{lk}(\emptyset)\simeq\Sigma L$. By Lemma \ref{lemstarclsindgrp},
if we take $H=(C_n-v_1)+v_3v_{n-1}$, then $K=\mathcal{F}_0(H)$ and $L=\mathcal{F}_0(H-v_2-v_n)$. 
Notice that $H-v_2-v_n\cong C_{n-3}$ for $n\geq6$ and $H-v_2-v_n-\cong K_2$ for $n=5$.

For any $n\geq5$, by Corollary \ref{corpoljoincone}, we have that
$$\aste{Z}_{J}(\underline{X},\underline{A})\simeq\bigvee_{\sigma\in J}\mathrm{lk}_L(\sigma)*\aste{D}(\sigma)$$
Now, by Corollary \ref{corpolyjoinnullpairs}
$$\Sigma\aste{Z}_{L}(\underline{X},\underline{A})\simeq\bigvee_{\sigma\in L}\Sigma\mathrm{lk}_L(\sigma)*\aste{D}(\sigma)$$
Therefore, by Lemma \ref{lemlkstarcls},  if $\aste{Z}_{K}(\underline{X},\underline{A})$ has the homotopy type required, then the theorem is true. 
\begin{figure}
\centering
\begin{tikzpicture}[line cap=round,line join=round,>=triangle 45,x=1.5cm,y=1.5cm]
\clip(2.5,1.5) rectangle (6.5,3.5);
\fill[fill=black,fill opacity=0.53] (3,2.5) -- (4,2) -- (4,3) -- cycle;
\fill[fill=black,fill opacity=0.53] (4,2) -- (4,3) -- (5,3) -- cycle;
\fill[fill=black,fill opacity=0.53] (4,2) -- (5,2) -- (5,3) -- cycle;
\fill[fill=black,fill opacity=0.53] (5,2) -- (6,2.5) -- (5,3) -- cycle;
\draw (3,2.5)-- (4,2);
\draw (4,2)-- (4,3);
\draw (4,3)-- (3,2.5);
\draw (4,2)-- (4,3);
\draw (4,3)-- (5,3);
\draw (5,3)-- (4,2);
\draw (4,2)-- (5,2);
\draw (5,2)-- (5,3);
\draw (5,3)-- (4,2);
\draw (5,2)-- (6,2.5);
\draw (6,2.5)-- (5,3);
\draw (5,3)-- (5,2);
\begin{scriptsize}
\fill [color=black] (3,2.5) circle (1.5pt);
\draw [color=black] (2.8,2.5) node {$v_3$};
\fill [color=black] (4,3) circle (1.5pt);
\draw [color=black] (4,3.2) node {$v_5$};
\fill [color=black] (4,2) circle (1.5pt);
\draw [color=black] (4,1.8) node {$v_7$};
\fill [color=black] (5,3) circle (1.5pt);
\draw [color=black] (5,3.2) node {$v_2$};
\fill [color=black] (5,2) circle (1.5pt);
\draw [color=black] (5,1.8) node {$v_4$};
\fill [color=black] (6,2.5) circle (1.5pt);
\draw [color=black] (6.2,2.5) node {$v_6$};
\end{scriptsize}
\end{tikzpicture}
\caption{$K$ for $n=7$}\label{kn7}
\end{figure}
Now, we focus on $K$.
\begin{itemize}
    \item If $n=5$, then $H\cong P_4$ and from Theorem \ref{poljointrees} we have that
    $$\aste{Z}_{K}(\underline{X},\underline{A})\simeq\bigvee_{\sigma\in K}\mathrm{lk}(\sigma)*\aste{D}(\sigma)$$
    \item If $n=6$, then $K\cong P_5$ where the order of the vertices is $v_3,v_6,v_4,v_2,v_5$ and using Lemma \ref{homocolimpegado} is 
    easy to see that
    $$\aste{Z}_{K}(\underline{X},\underline{A})\simeq\bigvee_{\sigma\in K}\mathrm{lk}(\sigma)*\aste{D}(\sigma)$$
    \item If $n=7$, then $K$ the complex in Figure \ref{kn7} and it is easy to see that 
    $$\aste{Z}_{K}(\underline{X},\underline{A})\simeq\bigvee_{\sigma\in K}\mathrm{lk}(\sigma)*\aste{D}(\sigma)$$
    using Lemma \ref{homocolimpegado} and checking that the only simplexes without a contractible link are the triangles and the 
    edges $v_5v_7$, $v_2v_7$ and $v_2v_4$.
\end{itemize}

For $n\geq 8$, by Proposition \ref{proppoljoincloseneigh}, we have that
$$\aste{Z}_{K}(\underline{X},\underline{A})\simeq\aste{Z}_{\mathcal{F}_0(H-N_H(v_3))}(\underline{X},\underline{A})\vee\aste{Z}_{\mathcal{F}_0(H-v_3)}(\underline{X},\underline{A})\vee\Sigma\aste{Z}_{\mathcal{F}_0(H-N_H[v_3])}(\underline{X},\underline{A})$$
By Corollary \ref{corpoljoincone}, we have that 
$$\aste{Z}_{\mathcal{F}_0(H-N_H(v_3))}(\underline{X},\underline{A})\simeq\bigvee_{\sigma\in\mathcal{F}_0(H-N_H(v_3))}\mathrm{lk}(\sigma)*\aste{D}(\sigma),$$
$$\aste{Z}_{\mathcal{F}_0(H-v_3)}(\underline{X},\underline{A})\simeq\bigvee_{\sigma\in\mathcal{F}_0(H-v_3)}\mathrm{lk}(\sigma)*\aste{D}(\sigma)$$
and by Corollary \ref{corpolyjoinnullpairs}
$$\Sigma\aste{Z}_{\mathcal{F}_0(H-N_H[v_3])}(\underline{X},\underline{A})\simeq\bigvee_{\sigma\in\mathcal{F}_0(H-N_H[v_3])}\Sigma\mathrm{lk}(\sigma)*\aste{D}(\sigma)$$
It is clear that all the complexes $\mathcal{F}_0(H-v_3),\mathcal{F}_0(H-N_H(v_3),\mathcal{F}_0(H-N_H[v_3])$ are contractible. Now, $\mathcal{F}_0(H)\simeq\mathcal{F}_0(H-v_4-v_{n-1})$ because the only 
neighbor of $v_2$ is $v_3$ (\citep[see Lemma 3.2]{engstrom09}), and $\mathcal{F}_0(H-v_4-v_{n-1})$ is contractible as $v_n$ is an isolated vertex. By Lemma \ref{lemlkclosnghb}, 
$$\aste{Z}_{K}(\underline{X},\underline{A})\simeq\bigvee_{\sigma\in K}\mathrm{lk}_K(\sigma)*\aste{D}(\sigma)$$
\end{proof}

\section{Complexes of \texorpdfstring{$G\circ H$}{Î£}}
Remember that the lexicographic product $G\circ H$ is the graph obtained by taking a copy of $H$ for each vertex of $G$ and all the possible 
edges between two copies if the corresponding vertices are adjacent in $G$. First we will see that for the second factor only the 
homotopy type of its independence complex matters. 
\begin{theorem}
Let $H_1$ and $H_2$ be graphs such that $\mathcal{F}_0(H_1)\simeq \mathcal{F}_0(H_2)$, then $\mathcal{F}_0(G\circ H_1)\simeq \mathcal{F}_0(G\circ H_2)$.
\end{theorem}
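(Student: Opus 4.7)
The plan is to build directly on the previous proposition, which describes $\mathcal{F}_0(G\circ H)$ as the homotopy colimit of a punctured cube whose combinatorics depend only on $G$ and whose spaces depend on $H$ only through iterated joins of $\mathcal{F}_0(H)$. So I would let $\sigma_1,\dots,\sigma_n$ be the maximal simplices of $\mathcal{F}_0(G)$ and apply the previous proposition to both $G\circ H_1$ and $G\circ H_2$, getting punctured $n$-cubes $\mathcal{X}^{(1)},\mathcal{X}^{(2)}$ with
$$\mathcal{X}^{(k)}(S)\cong \mathcal{F}_0(H_k)^{*n_S},\qquad n_S=\Bigl|\bigcap_{i\notin S}\sigma_i\Bigr|,$$
indexed by the same $S\in\mathcal{P}_1(\underline{n})$ and with the same combinatorial pattern of inclusions.

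Next, fix a homotopy equivalence $f:\mathcal{F}_0(H_1)\longrightarrow\mathcal{F}_0(H_2)$. For each $S$, the iterated join $f^{*n_S}:\mathcal{F}_0(H_1)^{*n_S}\longrightarrow\mathcal{F}_0(H_2)^{*n_S}$ is a homotopy equivalence of CW-complexes (the join of homotopy equivalences between CW pairs is again a homotopy equivalence). I would then check that the collection $\{f^{*n_S}\}_S$ assembles into a map of punctured cubes. For $S\subseteq T$ we have $\bigcap_{i\notin S}\sigma_i\subseteq \bigcap_{i\notin T}\sigma_i$, and the structure map $\mathcal{X}^{(k)}(S)\hookrightarrow\mathcal{X}^{(k)}(T)$ is the canonical inclusion of a join over a subset of indices into the join over the larger set (sending each factor identically and setting the coordinates in the new factors to the basepoint). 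Because exactly the same map $f$ is used on every join factor and no reindexing happens, the naturality squares commute strictly.

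Finally, both punctured cubes are cofibrant: each $\mathcal{X}^{(k)}(S)=\bigast_{u\in\bigcap_{i\notin S}\sigma_i}\mathcal{F}_0((H_k)_u)$ is a subcomplex of $\mathcal{F}_0(G\circ H_k)$, and intersections of these subcomplexes are themselves subcomplexes, as invoked in the discussion preceding the proposition. Hence a levelwise homotopy equivalence of cubes induces a homotopy equivalence of homotopy colimits (Proposition 5.3.1 of \citep{cubicalhomotopy} or the general fact recorded there), giving
$$\mathcal{F}_0(G\circ H_1)\simeq\hocolim(\mathcal{X}^{(1)})\simeq\hocolim(\mathcal{X}^{(2)})\simeq\mathcal{F}_0(G\circ H_2).$$

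The main obstacle I expect is purely bookkeeping: ensuring that the identifications $\mathcal{X}^{(k)}(S)\cong\mathcal{F}_0(H_k)^{*n_S}$ can be chosen compatibly for all $S$ so that the cube maps really do become the canonical ``include extra factors at the basepoint'' inclusions. Once the enumeration of the join factors by elements of $\bigcap_{i\notin S}\sigma_i$ is fixed once and for all (using the vertices of $G$ themselves as the index set, so that $S\subseteq T$ genuinely induces the inclusion of index sets), the required square commutes on the nose and the argument proceeds as above.
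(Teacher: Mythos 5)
Your proposal is correct and takes essentially the same route as the paper: both realize $\mathcal{F}_0(G\circ H_k)$ as the homotopy colimit of the punctured cube of intersections indexed by the maximal simplices of $\mathcal{F}_0(G)$, and then transport a fixed homotopy equivalence $f:\mathcal{F}_0(H_1)\longrightarrow\mathcal{F}_0(H_2)$ levelwise via its join powers to get a homotopy equivalence of punctured cubes. Your added bookkeeping (indexing the join factors by the vertices of $G$ so the naturality squares commute strictly, and noting the join of homotopy equivalences is a homotopy equivalence) simply makes explicit what the paper compresses into the phrase ``the corresponding induced homotopy equivalence.''
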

\begin{proof}
If $\sigma_1,\dots,\sigma_k$ are the maximal simplexes of $\mathcal{F}_0(G)$, taking $G_i=G[\sigma_i]$, $X_i=\mathcal{F}_0(G_i\circ H_1)$ and $Y_i=\mathcal{F}_0(G_i\circ H_2)$, we 
have that $X_i\cong \mathcal{F}_0(H_1)^{*|\sigma_i|}$, $Y_i\cong \mathcal{F}_0(H_2)^{*|\sigma_i|}$. 

From this, $\mathcal{F}_0(G\circ H_1)=X_1\cup\cdots\cup X_k$ and $\mathcal{F}_0(G\circ H_2)=Y_1\cup\cdots\cup Y_k$. We take the 
punctured $k$-cubes 
$$\mathcal{X}(S)=\bigcap_{i\in S^c}X_i \;\;\;\;\mbox{and}\;\;\;\;\mathcal{Y}(S)=\bigcap_{i\in S^c}Y_i$$
with the inclusions as the maps. If $S^c=\{i_1,\dots,i_m\}$, we have that 
$$X_{i_1}\cap\cdots\cap X_{i_m}\cong\bigast_{i\in\sigma_{i_1}\cap\cdots\sigma_{i_m}}\mathcal{F}_0(H_1),\;\;\; Y_{i_1}\cap\cdots\cap Y_{i_m}\cong\bigast_{i\in\sigma_{i_1}\cap\cdots\sigma_{i_m}}\mathcal{F}_0(H_2).$$
If $f:\mathcal{F}_0(H_1)\longrightarrow \mathcal{F}_0(H_2)$ is a homotopy equivalence, taking 
$f_S:\mathcal{X}(S)\longrightarrow\mathcal{Y}(S)$ the corresponding induced homotopy equivalence if 
$\displaystyle\bigcap_{i\in S^c}\sigma_i\neq\emptyset$, we have that the collection of maps $\{f_S:S\in\mathcal{P}_1(\underline{k})\}$ is 
a homotopy equivalence between the punctured cubes.
\end{proof}

Before continuing, notice that the independence complex of a lexicographic product is a polyhedral join, as has been 
pointed out in \citep{okurapoljoin}, in fact 
$\displaystyle\mathcal{F}_0(G\circ H)=\aste{Z}_{\mathcal{F}_0(G)}(\underline{\mathcal{F}_0(H)},\underline{\emptyset})$.
From this we can calculate the homotopy type of $\mathcal{F}_0(G\circ H)$ for some families as applications of the results of section 4, 
we also are able to give some general results by the work done in Section 3.
First we give a formula for the homotopy type of the suspension of $\mathcal{F}_0$ for any lexicographic product in terms of the 
$\mathcal{F}_0$'s of the factors and induced subgraphs of the first factor, this is Corollary \ref{corpolyjoinnullpairs} for the particular 
case of $\mathcal{F}_0(G\circ H)$, notice that 
$\mathrm{lk}_{\mathcal{F}_0(G)}(\sigma)=\mathcal{F}_0\left(G-\bigcup_{v\in\sigma}N[v]\right)$.

\begin{theorem}\label{teosuspf0}
For any graphs $G$ and $H$,
$$\Sigma \mathcal{F}_0(G\circ H)\simeq\bigvee_{\sigma\in \mathcal{F}_0(G)}\sum\left(\mathcal{F}_0\left(G-\bigcup_{v\in\sigma}N[v]\right)*\mathcal{F}_0(H)^{*|\sigma|}\right).$$
\end{theorem}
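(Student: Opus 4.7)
The plan is to recognize $\mathcal{F}_0(G\circ H)$ as a polyhedral join and then invoke the Corollary following Theorem~\ref{teosmashjoinpol}. Concretely, I would index the family of pairs by $V(G)$ and set $(X_v,A_v)=(\mathcal{F}_0(H_v),\emptyset)$, where $H_v$ denotes the copy of $H$ attached to $v$. Each independent set of $G\circ H$ is specified by its set $\sigma\subseteq V(G)$ of touched copies, which must itself be independent in $G$, together with an independent set of $H_v$ for each $v\in\sigma$; this is exactly the combinatorics of the join piece $J(\sigma)=\bigast_{v\in\sigma}\mathcal{F}_0(H_v)$. Taking the union over $\sigma\in\mathcal{F}_0(G)$ identifies $\mathcal{F}_0(G\circ H)\cong \aste{Z}_K(\underline{X},\underline{A})$ with $K=\mathcal{F}_0(G)$, which is the observation attributed to \citep{okurapoljoin}.

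Since every $A_v$ is empty, the hypothesis that $\Sigma A_v\hookrightarrow\Sigma X_v$ be null-homotopic is trivially satisfied, and the Corollary yields
$$\Sigma \mathcal{F}_0(G\circ H)\simeq \Sigma\bigl(K*\aste{D}(\emptyset)\bigr)\vee\bigvee_{\sigma\in K}\Sigma\bigl(lk(\sigma)*\aste{D}(\sigma)\bigr).$$
It then remains to translate the right-hand side back into combinatorial data of $G$ and $H$. With the standard convention that a join with an empty factor acts as the identity, $\aste{D}(\emptyset)$ is empty and so $K*\aste{D}(\emptyset)=K=\mathcal{F}_0(G)$, producing the leading summand $\Sigma \mathcal{F}_0(G)$. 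For a nonempty $\sigma\in K$, the factors indexed by $v\notin\sigma$ collapse and we are left with $\aste{D}(\sigma)=\mathcal{F}_0(H)^{*|\sigma|}$. Finally, a face $\tau$ lies in $lk_{\mathcal{F}_0(G)}(\sigma)$ exactly when $\tau\cap\sigma=\emptyset$ and $\tau\cup\sigma$ is an independent set of $G$, which says that $\tau$ is an independent set contained in $V(G)\setminus\bigcup_{v\in\sigma}N[v]$; hence $lk_{\mathcal{F}_0(G)}(\sigma)=\mathcal{F}_0\bigl(G-\bigcup_{v\in\sigma}N[v]\bigr)$.

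Substituting these three identifications into the right-hand side of the Corollary then produces the stated homotopy equivalence. I expect the main point to watch is the careful bookkeeping of empty-factor conventions, in particular that the $\sigma=\emptyset$ contribution of the wedge coincides with the leading $\Sigma \mathcal{F}_0(G)$ summand and should not be counted twice; beyond that, everything is a direct substitution into a result already in hand.
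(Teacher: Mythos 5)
Your proposal is correct and takes essentially the same route as the paper: the paper likewise identifies $\mathcal{F}_0(G\circ H)$ with the polyhedral join $\aste{Z}_{\mathcal{F}_0(G)}(\mathcal{F}_0(H),\emptyset)$ and then applies Theorem \ref{teosmashjoinpol} followed by Theorem \ref{polysmash} (whose combination is exactly the Corollary you invoke), using $\Sigma\emptyset=\mathbb{S}^0$ with the null-homotopic inclusion $\mathbb{S}^0\hookrightarrow\Sigma\mathcal{F}_0(H)$ and the identification $lk_{\mathcal{F}_0(G)}(\sigma)=\mathcal{F}_0\left(G-\bigcup_{v\in\sigma}N[v]\right)$. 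Your additional bookkeeping (the combinatorial verification of the polyhedral-join identification, the empty-factor conventions, and not double-counting the $\sigma=\emptyset$ term) simply makes explicit details the paper leaves implicit.
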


As immediate consequence we have the following corollary.
\begin{cor}
For any graphs $G$ and $W$,
$$\tilde{H}_q(\mathcal{F}_0(G\circ W))\cong\bigoplus_{\sigma\in \mathcal{F}_0(G)}\tilde{H}_q\left(\mathcal{F}_0\left(G-\bigcup_{v\in\sigma}N[v]\right)*\mathcal{F}_0(W)^{*|\sigma|}\right).$$
\end{cor}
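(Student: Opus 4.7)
The corollary is obtained simply by applying reduced singular homology to the decomposition supplied by Theorem \ref{teosuspf0}. First, I would invoke that theorem (with $H=W$) to rewrite
$$\Sigma\mathcal{F}_0(G\circ W)\;\simeq\;\Sigma\mathcal{F}_0(G)\;\vee\;\bigvee_{\sigma\in\mathcal{F}_0(G)}\Sigma\left(\mathcal{F}_0\Bigl(G-\bigcup_{v\in\sigma}N[v]\Bigr)*\mathcal{F}_0(W)^{*|\sigma|}\right).$$

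Next, since each term on the right is a well-pointed CW-complex (this is where the connectedness assumption on $\mathcal{F}_0(G)$ enters: it guarantees that $\Sigma\mathcal{F}_0(G\circ W)$ is connected and that the wedge has a compatible basepoint), the reduced homology functor turns the wedge into a direct sum, giving
$$\tilde H_{q+1}\bigl(\Sigma\mathcal{F}_0(G\circ W)\bigr)\;\cong\;\tilde H_{q+1}\bigl(\Sigma\mathcal{F}_0(G)\bigr)\;\oplus\;\bigoplus_{\sigma\in\mathcal{F}_0(G)}\tilde H_{q+1}\!\left(\Sigma\Bigl(\mathcal{F}_0\bigl(G-\textstyle\bigcup_{v\in\sigma}N[v]\bigr)*\mathcal{F}_0(W)^{*|\sigma|}\Bigr)\right).$$

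Finally, I would apply the suspension isomorphism $\tilde H_{q+1}(\Sigma X)\cong\tilde H_q(X)$ term by term on both sides. This strips off the outer $\Sigma$ everywhere simultaneously, and after relabeling indices one reads off exactly the claimed formula. No step of this argument is subtle; the entire content of the corollary is Theorem \ref{teosuspf0} together with the standard fact that reduced homology is additive on wedges of pointed CW-complexes, so there is no real obstacle to anticipate.
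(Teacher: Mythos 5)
Your proposal is correct and follows essentially the same route as the paper, which states the corollary as immediate from Theorem \ref{teosuspf0} with exactly the two standard ingredients you supply: additivity of reduced homology over wedges of CW-complexes and the suspension isomorphism $\tilde{H}_{q+1}(\Sigma X)\cong\tilde{H}_q(X)$ applied termwise. One minor quibble that does not affect the argument: the well-pointedness needed for wedge-additivity already comes from the CW structure, not from the connectedness of $\mathcal{F}_0(G)$, so your parenthetical attribution of that hypothesis is slightly off even though the proof goes through as written.
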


The last theorem gives us an equivalence between the suspensions of two spaces, so it is natural to ask if the formula is true without 
suspending, for some $G$. As we will see, the desuspended formula is true for: cycles, trees, some powers of paths 
and any graph such that its complement has girth at least $4$. So is natural to ask the following question.

\begin{que}
For which graphs $G$, with $\mathcal{F}_0(G)$ connected, is it true that
$$\mathcal{F}_0(G\circ H)\simeq\bigvee_{\sigma\in\mathcal{F}_0(G)}\mathrm{lk}_{\mathcal{F}_0(G)}(\sigma)*\mathcal{F}_0(H)^{*|\sigma|}$$
for all $H$?
\end{que}

Before we proceed with calculations form some particular families, we give a lower bound for the connectivity of the 
independence complex of lexicographic products. The following proposition is Proposition \ref{connpoljoin} for the particular 
case of $\mathcal{F}_0(G\circ H)$.

\begin{prop}
Let $G$ and $H$ be graphs such that $\conn(\mathcal{F}_0(H))\geq k\geq\conn(\mathcal{F}_0(G))\geq0$, then 
$\conn(\mathcal{F}_0(G\circ H))=\conn(\mathcal{F}_0(G))$ and for all $r\leq k+1$
$$\pi_{r}(\mathcal{F}_0(G\circ H))\cong\pi_{r}(\mathcal{F}_0(G))$$
\end{prop}

Notice that the homotopy type of $\mathcal{F}_0(G\circ H)$ does depend on finer details of $G$ than just the homotopy type of 
its independence complex: for example the independence complexes of $P_5$ and $P_6$ have the same 
homotopy type \citep{kozlovdire}  but the ones for the 
corresponding lexicographic products do not have to agree. In \citep{okura} the homotopy type of $\mathcal{F}_0(P_{n}\circ H)$ is computed
when $\mathcal{F}_0(H)$ is homotopy equivalent to a wedge of spheres. In \citep{okurajoinbosq} this was generalized in two different directions: the paper calculates the homotopy type of $\mathcal{F}_0(P_n \circ H)$ for any graph $H$, and the homotopy type of $\mathcal{F}_0(T\circ H)$ when $T$ is a tree and $\mathcal{F}_0(H)$ is 
homotopy equivalent to a wedge of spheres. Here we obtain the homotopy type of $\mathcal{F}_0(T\circ H)$ in general 
as corollary to Theorem \ref{poljointrees}.

\begin{theorem}
Let $T$ be a tree and $H$ any graph. Then 
\begin{itemize}
    \item If $T\cong K_{1,n}$, then $\mathcal{F}_0(T\circ H)\simeq\mathcal{F}_0(H)\sqcup\mathcal{F}_0(H)^{*n}$.
    \item If $T\ncong K_{1,n}$, then
    $$\mathcal{F}_0(T\circ H)\simeq\bigvee_{\sigma\in\mathcal{F}_0(T)}\mathrm{lk}(\sigma)*\mathcal{F}_0(H)^{*|\sigma|} .$$
\end{itemize}
\end{theorem}

For paths, we can give a formula for the homotopy type of $\mathcal{F}_0(P_{n}\circ H)$ using the following polynomials:
$a_0(x,y)=1$, $b_0(x,y)=y$ and $c_0(x,y)=x+2y$
$$a_r(x,y)=(x+y)a_{r-1}(x,y)+yb_{r-1}(x,y)$$
$$b_r(x,y)=(x+y)b_{r-1}(x,y)+yc_{r-1}(x,y)$$
$$c_r(x,y)=(x+y)c_r(x,y)+ya_{r-1}(x,y)$$

\begin{theorem}\label{teopnh}
For any graph $H$
$$\mathcal{F}_0(P_{n}\circ H)\simeq\left\lbrace\begin{array}{cc}
    \mathcal{F}_0(H) &  \mbox{ if }n=1\\
    \mathcal{F}_0(H)\sqcup \mathcal{F}_0(H) & \mbox{ if } n=2\\
    \displaystyle\Sigma\left(\mathcal{F}_0(H)^{\wedge2}\right)\sqcup \mathcal{F}_0(H)& \mbox{ if } n=3\\
    \displaystyle\bigvee_{i,j}\bigvee_{a_{ij}^{(r)}}\left(\Sigma^i\mathcal{F}_0(H)^{*j}\right)& \mbox{ if } n=3r\geq6\\
    \displaystyle\bigvee_{i,j}\bigvee_{b_{ij}^{(r)}}\left(\Sigma^i\mathcal{F}_0(H)^{*j}\right)& \mbox{ if } n=3r+1\geq4\\
    \displaystyle\vee\bigvee_{i,j}\bigvee_{c_{ij}^{(r)}}\left(\Sigma^i\mathcal{F}_0(H)^{*j}\right)& \mbox{ if } n=3r+2\geq5
\end{array}
\right.$$
where $a_{ij}^{(r)},\;b_{ij}^{(r)}\mbox{ and }c_{ij}^{(r)}$ are the coefficients of $x^iy^j$ in $a_r(x,y),\; b_r(x,y),\; c_r(x,y)$ 
respectively. 
\end{theorem}
\begin{proof}
By Theorem \ref{poljointrees} we only need to show that the respecting polynomials count the simplices with non-contractible links. In 
fact we will prove that the coefficient of the polynomials count how many simplicies of size $j$ with link homotopy 
equivalent to $\mathbb{S}^{i-1}$ there are. 

We define $a_0(x,y)=1$, as there is no graph $P_{0}$. For $b_0(x,y)$ we have that $P_{1}$ is just a 
point, so the link of the only vertex is $\emptyset$ and the empty simplex has contractible link, thus $b_0(x,y)=y$. 
For $c_0(x,y)$ we have that $P_{2}$ is 
just an edge and its independence complex is $\mathbb{S}^0$, where the vertices have $\emptyset$ as link and the empty simplex has 
$\mathbb{S}^0$ as link, therefore $c_0(x,y)=x+2y$. For $a_1(x,y)$, the independence complex of $P_3$ is the disjoint 
union of an edge with a vertex, where the links of the edge and of the isolated vertex are $\emptyset$, the links of the vertices in the 
edge are contractible and lastly the link of the empty simplex has $\mathbb{S}^0$ as its homotopy type, thus 
$a_1(x,y)=x+y+y^2=(x+y)a_0(x,y)+yb_0(x,y)$. 

For $\mathcal{F}_0(P_n)$ let $f_{n}(x,y)$ be the polynomial which count how many simplicies of size $j$ such that their links have the homotopy 
type of $\mathbb{S}^{i-1}$ there are in $\mathcal{F}_0(P_n)$. We know that $f_{1}(x,y)=b_0(x,y)$, $f_{2}(x,y)=c_0(x,y)$ and $f_3(x,y)=a_1(x,y)$. 
We take $v_1,v_2,\dots,v_n$ as the vertices of $P_n$. Let $\sigma$ be a simplex of $K=\mathcal{F}_0(P_n)$. There are three possibilities: 
\begin{enumerate}
    \item If $v_2$ is a vertex of $\sigma$, then $\mathrm{lk}_K(\sigma)=\mathrm{lk}_J(\sigma-\{v_2\})$, where 
    $J=\mathcal{F}_0(P_n-N_{P_n}[v_2])\cong\mathcal{F}_0(P_{n-3})$. 
    \item If neither of $v_1,v_2,v_3$ are vertices of $\sigma$, then $v_1,v_2$ are vertices of $\mathrm{lk}_K(\sigma)=\mathcal{F}_0(P_n-N_{P_n}[\sigma])$. 
    If $v_3$ is in $\mathrm{lk}_K(\sigma)$, then 
    $\mathcal{F}_0(P_n-N_{P_n}[\sigma])\simeq\mathcal{F}_0(P_n-v_3-N_{P_n}[\sigma])\cong\Sigma\mathrm{lk}_J(\sigma)$ (see \cite[Lemma 3.2]{engstrom09}).
    If $v_3$ is not in $\mathrm{lk}_K(\sigma)$, then $\mathcal{F}_0(P_n-N_{P_n}[\sigma])\cong\Sigma\mathrm{lk}_J(\sigma)$.
    \item Assume $v_2$ is not in $\sigma$ but at least one of $v_1$ or $v_3$ is in $\sigma$. If $v_1$ is not in $\sigma$, then $v_3$ is in $\sigma$ and 
    $\mathrm{lk}_K(\sigma)$ is contractible because is a cone with $v_1$ as apex vertex. Assume $v_1$ is in $\sigma$, then   
    $\mathrm{lk}_K(\sigma)=\mathrm{lk}_L(\sigma-\{v_1\})$ where $L=\mathcal{F}_0(P_n-N[v_1])\cong\mathcal{F}_0(P_{n-2})$.
\end{enumerate}
From all this we have that $f_{n}(x,y)=(x+y)f_{n-3}(x,y)+yf_{n-2}(x,y)$ and we obtain the result.
\end{proof}

Now we give the generating functions to the polynomials. We define the following series and use the recurrence 
relationship between the polynomials:
$$A(t)=\sum_{r\geq0}a_r(x,y)t^r=1+(x+y)tA(t)+ytB(t)$$
$$B(t)=\sum_{r\geq0}b_r(x,y)t^r=y+(x+y)tB(t)+ytC(t)$$
$$C(t)=\sum_{r\geq0}c_r(x,y)t^r=x+y+(x+y)tC(t)+yA(t)$$
taking $\displaystyle h(t)=\frac{1}{1-(x+y)t}$ we have that
$$A(t)=h(t)\left[1+ytB(t)\right],\;\;B(t)=h(t)\left[y+ytC(t)\right],\;\;C(t)=h(t)\left[x+y+yA(t)\right]$$
Doing the corresponding substitutions and calculations we obtain the following: 
$$A(t)=\frac{h(t)+y^2t(h(t))^2+(xy^2+y^3)t^2(h(t))^3}{1-y^3t^2(h(t))^3}$$
$$B(t)=\frac{yh(t)+(xy+y^2)t(h(t))^2+y^2t(h(t))^3}{1-y^3t^2(h(t))^3}$$
$$C(t)=\frac{(x+y)h(t)+y(h(t))^2+y^3t(h(t))^3}{1-y^3t^2(h(t))^3}$$

\begin{theorem}
$$\mathcal{F}_0(C_n\circ H)\simeq\left\lbrace\begin{array}{cc}
    \displaystyle\bigsqcup_{3}\mathcal{F}_0(H) & \mbox{if } n=3 \\
     \displaystyle\bigsqcup_{3}\mathcal{F}_0(H)^{*2} & \mbox{if } n=4 \\
     \displaystyle\bigvee_{\sigma\in\mathcal{F}_0(C_n)}\mathrm{lk}(\sigma)*\mathcal{F}_0(H)^{*|\sigma|} & \mbox{ for } n\geq5
\end{array}\right.$$
\end{theorem}
\begin{proof}
The cases $n=3,4$ are clear. For $n\geq5$, its a particular case of Theorem \ref{teopoljoincycl}.
\end{proof}

\begin{theorem}\label{theocomplgrith4}
Let $G$ be a graph such that $G^c$ is connected and $g(G^c)\geq4$, then
$$\mathcal{F}_0(G\circ H)\simeq\bigvee_{\sigma\in\mathcal{F}_0(G)}\mathrm{lk}(\sigma)*\mathcal{F}_0(H)^{*|\sigma|}$$
for any graph $H$.
\end{theorem}
\begin{proof}
If $\Delta(G^c)=n-1$, we take $v$ a vertex of maximum degree in $G^c$. Then  $G\cong K_{1,n-1}$ and 
$$\mathcal{F}_0(G\circ H)=\displaystyle\bigcup_{u\in N_G(v)}\mathcal{F}_0(H)^{*2},$$ the homotopy type can be calculate easily with induction and 
Lemma \ref{homocolimpegado}.

We proceed by induction on $m$ the number of edges. For 
$m=0,1,2$ is clear. Assume the theorem is true for any graph $G$ such that $G^c$ is connected, $G^c$ has size less than $m$ and $g(G^c)\geq4$. Let 
$G$ be a graph such that $G^c$ has size $m$  and $g(G^c)\geq4$. We can assume that $\Delta(G^c)<n-1$.
If $\delta(G^c)=1$, take $v$ a degree one vertex and $u$ its only neighbor, then 
$$\mathcal{F}_0(G\circ H)=\mathcal{F}_0(H)^{*2}\cup\aste{Z}_{K}(\underline{\mathcal{F}_0(H)},\underline{\emptyset})$$
where $K$ is the simplicial complex $G^c-v$. If we take
$$J=\mathcal{F}_0(H)^{*2}\cap\aste{Z}_K(\underline{X},\underline{\emptyset})=\aste{D}(u)=\mathcal{F}_0(H),$$
then the inclusions $\mathcal{F}_0(H)\longhookrightarrow\mathcal{F}_0(H)^{*2}$ and 
$\mathcal{F}_0(H)\longhookrightarrow\aste{Z}_{K}(\underline{\mathcal{F}_0(H)},\underline{\emptyset})$
are null-homotopic, by inductive hypothesis and using Lemma \ref{homocolimpegado} we obtain the result. 
Assume that $\delta(G^c)\geq2$, then $G^c$ has at least one cycle and erasing any edge of a cycle does not disconnect $G^c$. Let 
$uv$ an edge which belongs to a vertex-induced cycle $C$, then 
$$\mathcal{F}_0(G\circ H)=\mathcal{F}_0(H)^{*2}\cup\aste{Z}_L(\underline{X},\underline{\emptyset})$$
where $L$ is the simplicial complex $G^c-uv$. As before, we take 
$$J=\aste{D}(uv)\cap\aste{Z}_L(\underline{X},\underline{A})=\mathcal{F}_0(H)\sqcup\mathcal{F}_0(H)$$
Then $J\longhookrightarrow\mathcal{F}_0(H)^{*2}$ and 
$J\longhookrightarrow\aste{Z}_L(\underline{X},\underline{\emptyset})$ are null-homotopic. Therefore 
$$\aste{Z}_{G^c}(\underline{\mathcal{F}_0(H)},\underline{\emptyset})\simeq\aste{D}(uv)\vee\aste{Z}_L(\underline{\mathcal{F}_0(H)},\underline{\emptyset})\vee\Sigma\mathcal{F}_0(H)\vee\Sigma\mathcal{F}_0(H)\vee\mathbb{S}^1.$$
Notice that $G^c\simeq L\vee\mathbb{S}^1$, $\mathrm{lk}_{G^c}(w)\simeq\mathrm{lk}_{L}(w)\vee\mathbb{S}^0$ for $w=u,v$ and 
$\mathrm{lk}_{G^c}(x)\simeq\mathrm{lk}_{L}(x)$ for $x\neq u,v$. The rest follows by inductive hypothesis on $L$.
\end{proof}

The following theorem is just a particular case of Corollary \ref{corsuspcompl}.
\begin{theorem}
Let $H$ and $W=K_l\sqcup G$ graphs with $v_1,\dots,v_l$ the vertices of $K_l$, then
$$\mathcal{F}_0(W\circ H)\simeq\bigvee_{\sigma\in\mathcal{F}_0(G)}\left(\bigvee_{l-1}\Sigma\mathrm{lk}(\sigma)*\aste{D}(\sigma)\vee\bigvee_{i=1}^l\mathrm{lk}(\sigma)*\aste{D}(\sigma\cup\{v_i\})\right).$$
\end{theorem}

\begin{theorem}
For any graph $H$ and $n\geq2r+2$, 
$$\mathcal{F}_0(P_n^r\circ H)\simeq\bigvee_{\sigma\in\mathcal{F}_0(P_n^r)}\mathrm{lk}(\sigma)*\mathcal{F}_0(H)^{*|\sigma|}$$
\end{theorem}
\begin{proof}
The theorem follows form Theorem \ref{poljoinpathpow} and Theorem \ref{theocomplgrith4}.
\end{proof}

We now proceed to study the complexes $\mathcal{F}_d$ for $d\geq1$ for lexicographic products of the form 
$K_{n_1,\dots,n_l}\circ K_r$. First we recall a result for graph joins that will allow us to see that the homotopy type of the second factor 
is not enough to study the homotopy type of these complexes.
Now for any graph $G$, the graph $K_2\circ G$ is also the graph join of two copies of $G$, in \citep{forestfilt} is given a formula 
in general for the join of two graphs for $d\geq1$, here we state this result for the particular case of $K_2\circ G$.

\begin{lem}\citep{forestfilt}\label{lemjoingraf}
Let $G$ be a graph of order $n$. Then:
\begin{enumerate}
    \item $\displaystyle\mathcal{F}_1(K_2\circ G)\simeq\bigvee_2\mathcal{F}_1(G)\vee\bigvee_{n^2-1}\mathbb{S}^1$.
    \item If $\mathcal{F}_0(G)$ is connected, then, for all $d\geq2$,
$$\mathcal{F}_d(K_2\circ G)\simeq\bigvee_{2n-2}\Sigma \mathrm{sk}_{_{d-1}}\mathcal{F}_0(G)\vee\bigvee_{(n-1)^2}\mathbb{S}^2\vee\bigvee_2A$$
where $A=\mathcal{F}_d(G)\cup C(\mathrm{sk}_{_{d-1}} \mathcal{F}_0(G))$.
\end{enumerate}
\end{lem}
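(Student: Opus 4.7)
The plan is to decompose $\mathcal{F}_d(K_2\circ G)$ according to the two copies $V_1,V_2$ of $V(G)$ inside $K_2\circ G$, and then feed the pieces into Lemma \ref{homocolimpegado}. Let $X_i=\mathcal{F}_d(G)$ viewed on $V_i$. Any simplex $S=S_1\sqcup S_2$ of $\mathcal{F}_d(K_2\circ G)$ with $S_i\subseteq V_i$ inherits every cross-edge between $S_1$ and $S_2$, so in order to avoid a $4$-cycle we cannot have $|S_1|,|S_2|\geq 2$, and when exactly one side is a singleton $\{v\}$ the opposite side must be $G$-independent (else a triangle) and of size at most $d$ (to bound $\deg(v)$). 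This yields
$$\mathcal{F}_d(K_2\circ G)=X_1\cup X_2\cup\bigcup_{u\in V_1}T_u\cup\bigcup_{w\in V_2}T_w,$$
where $T_v=\{v\}*sk_{d-1}\mathcal{F}_0(G)$ is the cone from $v$ over the $(d-1)$-skeleton of $\mathcal{F}_0$ on the opposite side.

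For part (1), when $d=1$ the ``cones'' $T_v$ are just stars (since $sk_0\mathcal{F}_0(G)$ is the discrete vertex set), so $\bigcup_v T_v=K_{n,n}$ and the complex reduces to $X_1\cup X_2\cup K_{n,n}$. The useful elementary observation is that $\mathcal{F}_1(G)$ is always connected: its $1$-skeleton contains $K_n$ because any pair of vertices of $G$ forms either an edge or an independent pair, both of which are matchings. Attaching the $n^2$ edges of $K_{n,n}$ one at a time to $X_1\sqcup X_2$, the first edge merges the two components into $X_1\vee X_2$, while each of the remaining $n^2-1$ edges has both endpoints already in the same connected subcomplex and therefore contributes a wedged $\mathbb{S}^1$, yielding the stated formula.

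For part (2), with $d\geq 2$, set $U_1=X_1\cup\bigcup_{w\in V_2}T_w$ and $U_2$ symmetrically, so that $\mathcal{F}_d(K_2\circ G)=U_1\cup U_2$; a direct simplex-by-simplex check gives $U_1\cap U_2=K_{n,n}$. Next one shows $U_i\simeq A\vee\bigvee_{n-1}\Sigma sk_{d-1}\mathcal{F}_0(G)$ by attaching the cones $T_{w_j}$ to $X_i$ iteratively along their common base $B:=sk_{d-1}\mathcal{F}_0(G)$: the first attachment reproduces $A$ by definition, and every subsequent cone, being contractible and glued along a subcomplex already present, has the effect (up to homotopy) of collapsing $B$ in the already-built space; via $C(B)/B\simeq\Sigma B$ this adds one further wedge summand $\Sigma B$ at each step.

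To finish, apply Lemma \ref{homocolimpegado} to the pushout $U_1\longleftarrow K_{n,n}\longrightarrow U_2$, which demands that each inclusion $K_{n,n}\hookrightarrow U_i$ be null-homotopic. Since $K_{n,n}$ is $1$-dimensional and aspherical it is enough to check triviality on $\pi_1$, and this is the main obstacle of the argument. One verifies it on a generating $4$-cycle $u_1\to w_j\to u_i\to w_1\to u_1$: its two length-two segments lie inside the contractible cones $T_{w_j}$ and $T_{w_1}$, and because $B$ is connected --- using both $d\geq 2$ and the connectedness hypothesis on $\mathcal{F}_0(G)$, which propagates to its $(d-1)$-skeleton --- each segment can be homotoped rel endpoints to a path in $B$; choosing the two resulting $B$-paths to be mutual inverses (possible by contractibility of each cone) collapses the loop. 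Once this null-homotopy is in hand, Lemma \ref{homocolimpegado} delivers $\mathcal{F}_d(K_2\circ G)\simeq U_1\vee U_2\vee\Sigma K_{n,n}$, and combining with $\Sigma K_{n,n}\simeq\bigvee_{(n-1)^2}\mathbb{S}^2$ (from $K_{n,n}\simeq\bigvee_{(n-1)^2}\mathbb{S}^1$) produces the stated formula.
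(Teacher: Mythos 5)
This lemma is one the paper does not prove at all: it is quoted verbatim from \citep{forestfilt}, so there is no internal argument to compare against. Your proposal is, as far as I can check, a correct self-contained proof, and it is pleasantly economical in that it only invokes machinery already in this paper (Lemma \ref{homocolimpegado} plus the fact that unions of CW-subcomplexes compute homotopy pushouts). The decomposition is right: a simplex $S_1\sqcup S_2$ with $|S_1|,|S_2|\geq2$ induces a $4$-cycle, and the mixed simplices with $|S_1|=1$ are exactly the cones $\{v\}*sk_{d-1}\mathcal{F}_0(G)$, which gives your $X_1\cup X_2\cup\bigcup_v T_v$ picture; the identification $U_1\cap U_2=K_{n,n}$ checks out simplex by simplex; the observation that every pair of vertices is a simplex of $\mathcal{F}_1(G)$ (edge or non-edge, both are forests of maximum degree $\leq1$) correctly makes part (1) hypothesis-free; and your $\pi_1$-argument for the null-homotopy of $K_{n,n}\hookrightarrow U_i$ is sound, since the $4$-cycles $u_1w_ju_iw_1$ are exactly the generators coming from the double-star spanning tree, and each one dies because its two halves can be slid, inside the simply connected cones $T_{w_j}$ and $T_{w_1}$, onto a path in $B=sk_{d-1}\mathcal{F}_0(G)$ and its reverse --- this is precisely where $d\geq2$ and connectedness of $\mathcal{F}_0(G)$ enter, matching the hypotheses of the statement.

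One step deserves more careful wording. When you attach the $j$-th cone ($j\geq2$) you say the attachment ``has the effect (up to homotopy) of collapsing $B$'' and cite $C(B)/B\simeq\Sigma B$; but collapsing $B$ in $Y_j$ is not by itself a homotopy equivalence, since $B$ is not contractible. The clean statement is: $Y_j\cup_B C(B)$ is the mapping cone of the inclusion $B\hookrightarrow Y_j$, and this inclusion is null-homotopic because it factors through the contractible cone $T_{w_1}\subseteq A\subseteq Y_j$ already present, whence $Y_j\cup_B C(B)\simeq Y_j\vee\Sigma B$. You have all the ingredients for this on the page (you use exactly this factoring idea later for the $4$-cycles), so this is a phrasing repair, not a gap; with it, the count $U_i\simeq A\vee\bigvee_{n-1}\Sigma B$ and the final assembly $U_1\vee U_2\vee\Sigma K_{n,n}$ with $\Sigma K_{n,n}\simeq\bigvee_{(n-1)^2}\mathbb{S}^2$ reproduce the stated formula exactly.
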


Now we can see that in contrast with $\mathcal{F}_0$, the homotopy type of the second factor does not determine the homotopy type of 
$\mathcal{F}_d(G\circ-)$ for $d\geq1$. It is known that $\mathcal{F}_0(P_5)\simeq\mathcal{F}_0(P_6)\simeq\mathbb{S}^1$ 
and $\mathcal{F}_0(P_4)\simeq*$ (see \citep{kozlovdire}), and also that $\mathcal{F}_1(P_5)\simeq\mathcal{F}_1(P_6)\simeq*$ (see \citep{salvetti2018}),
and it is not hard to see that $\mathrm{sk}_1(\mathcal{F}_0(P_5))\simeq\mathbb{S}^1\vee\mathbb{S}^1$ and $\mathrm{sk}_1(\mathcal{F}_0(P_4))\simeq*$. From 
all this and Lemma \ref{lemjoingraf} we have that
$$\mathcal{F}_1(K_2\circ P_5)\simeq\bigvee_{24}\mathbb{S}^1\not\simeq\bigvee_{35}\mathbb{S}^1\simeq\mathcal{F}_1(K_2\circ P_6),$$
$$\mathcal{F}_2(K_2\circ P_4)\simeq\bigvee_{9}\mathbb{S}^2\not\simeq\bigvee_{36}\mathbb{S}^2\simeq\mathcal{F}_2(K_2\circ P_5),$$
and for $d\geq3$,
$$\mathcal{F}_d(K_2\circ P_4)\simeq\bigvee_{9}\mathbb{S}^2\not\simeq\bigvee_{26}\mathbb{S}^2\simeq\mathcal{F}_d(K_2\circ P_5).$$

Until now we only have worked with $\mathcal{F}_0(G\circ H)$, which is a polyhedral join; for $d\geq1$, 
sadly $\mathcal{F}_d(G\circ H)$ is not a polyhedral join but 
$\aste{Z}_{\mathcal{F}_d(G)}\left(\mathrm{sk}_0\Delta^{V(H)},\emptyset\right)$ is a subcomplex. Now, for $H=K_n$ we will make 
calculations for $G$ a complete multipartite graph.

\begin{prop}\label{propk1n}
For any $r$ and $n$,
$$\mathcal{F}_1\left(K_{1,n}\circ K_r\right)\simeq\bigvee_{\binom{r-1}{2}^n}\mathbb{S}^{2n-1}\vee\bigvee_{(nr^2-1)+\binom{r-1}{2}}\mathbb{S}^1;$$
for $2\leq d\leq n-1$,
$$\mathcal{F}_d\left(K_{1,n}\circ K_r\right)\simeq\bigvee_{\binom{r-1}{2}^n}\mathbb{S}^{2n-1}\vee\bigvee_{rf_{_{d-1}}(r,n-1)}\mathbb{S}^d\vee\bigvee_{\binom{r}{2}}\mathbb{S}^1;$$
and for $d=\infty$,
$$\mathcal{F}_\infty\left(K_{1,n}\circ K_r\right)\simeq\bigvee_{\binom{r-1}{2}^n}\mathbb{S}^{2n-1}\vee\bigvee_{r(r-1)^n}\mathbb{S}^n\vee\bigvee_{\binom{r}{2}}\mathbb{S}^1.$$
\end{prop}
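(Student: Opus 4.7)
The plan is to decompose the simplices of $\mathcal{F}_d(K_{1,n}\circ K_r)$ by how many vertices of the center copy $C\cong K_r$ (corresponding to the apex of $K_{1,n}$) they use, and then reassemble via two applications of Lemma \ref{homocolimpegado}. Writing the vertex set as $C\sqcup V(\Delta_1)\sqcup\cdots\sqcup V(\Delta_n)$ with each $\Delta_i\cong K_r$ a leaf copy, the fact that $C$ is a clique and every leaf-copy vertex is adjacent to all of $C$ forces any simplex $S$ to have $|S\cap C|\leq 2$, with $|S\cap C|=2$ requiring $S\cap V(\Delta_i)=\emptyset$ for every $i$. Thus the simplices split into Type~A ($S\cap C=\emptyset$), Type~B ($|S\cap C|=1$), and Type~C ($S=\{c,c'\}\subseteq C$).

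I would first identify each piece up to homotopy. A Type~A simplex uses at most two vertices from each $\Delta_i$ (three would form a triangle in $K_r$), so the Type~A subcomplex is $X_A=\mathcal{F}_\infty(K_r)^{*n}$; as $\mathcal{F}_\infty(K_r)$ equals the $1$-skeleton of $\Delta^{r-1}$, which is $\simeq\bigvee_{\binom{r-1}{2}}\mathbb{S}^1$, the join-wedge formula gives $X_A\simeq\bigvee_{\binom{r-1}{2}^n}\mathbb{S}^{2n-1}$. For Type~B, after fixing $c\in C$ the remaining vertices must pick at most one vertex per $\Delta_i$ (to avoid a triangle through $c$) and at most $d$ of them in total (degree bound at $c$). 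Closing under faces and taking the union over $c\in C$ produces the simplicial join $[r]*L$, where $[r]$ denotes $C$ as $r$ discrete points and $L=sk_{d-1}([r]^{*n})$ if $d<\infty$ or $L=[r]^{*n}$ if $d=\infty$. By the preceding proposition applied with indices $(d-1,n-1)$, $L\simeq\bigvee_{f_{d-1}(r,n-1)}\mathbb{S}^{d-1}$ in the finite case, and directly $L\simeq\bigvee_{(r-1)^n}\mathbb{S}^{n-1}$ when $d=\infty$. The Type~C subcomplex $Z$ is the graph $K_r$ on $C$, so $Z\simeq\bigvee_{\binom{r-1}{2}}\mathbb{S}^1$.

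Next comes the gluing. Setting $W=X_A\cup([r]*L)$, we have $X_A\cap([r]*L)=L$, since any simplex of $[r]*L$ disjoint from $C$ is already a face in $L\subseteq X_A$. The inclusion $L\hookrightarrow X_A$ is null-homotopic by a connectivity-dimension argument ($X_A$ is $(2n-2)$-connected while $\dim L\leq n-1$), and $L\hookrightarrow[r]*L$ is null-homotopic because $L$ lies inside the contractible cone $\{c\}*L$. Lemma \ref{homocolimpegado} then gives $W\simeq X_A\vee([r]*L)\vee\Sigma L$, and using $[r]*L\simeq\Sigma([r]\wedge L)\simeq\bigvee_{r-1}\Sigma L$ collapses this to $W\simeq X_A\vee\bigvee_{r\cdot f_{d-1}(r,n-1)}\mathbb{S}^d$ (respectively $X_A\vee\bigvee_{r(r-1)^n}\mathbb{S}^n$ for $d=\infty$). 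The full complex is $W\cup Z$, with intersection the $r$ discrete center vertices, and both inclusions are null-homotopic because $W$ and $Z$ are path-connected. A second application of Lemma \ref{homocolimpegado} gives $\mathcal{F}_d(K_{1,n}\circ K_r)\simeq W\vee Z\vee\Sigma C$, and $Z\vee\Sigma C$ is a wedge of $\binom{r-1}{2}+(r-1)=\binom{r}{2}$ circles, directly matching the stated formulas for $2\leq d\leq n-1$ and for $d=\infty$.

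The main subtlety is the $d=1$ case, where the $\mathbb{S}^d=\mathbb{S}^1$ summands coming from $W$ have the same dimension as the $\bigvee_{\binom{r}{2}}\mathbb{S}^1$ summand from $Z\vee\Sigma C$ and must be combined. Substituting $f_0(r,n-1)=nr-1$, the total number of circles becomes $r(nr-1)+\binom{r}{2}$, which a short computation shows equals $(nr^2-1)+\binom{r-1}{2}$. The one step I would scrutinize most carefully is the null-homotopy of $L\hookrightarrow X_A$: I would justify it via the connectivity bound $\mathrm{conn}(X_A)\geq 2n-2\geq\dim L$ rather than by constructing an explicit homotopy.
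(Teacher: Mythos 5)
Your proposal is correct and is essentially the paper's own proof: the paper uses the same three-piece decomposition (its $Z=\bigast_{i=1}^n\mathcal{F}_d(K_r^i)$ is your $X_A$, its $X=V(K_r^0)*\aste{Z}_{_{sk_{d-1}\Delta^{n-1}}}(sk_0K_r,\emptyset)$ is your $[r]*L$, and its $Y=\mathcal{F}_d(K_r^0)$ is your type-C complex), identifies the same intersections ($X\cap Z\simeq\bigvee_{f_{d-1}(r,n-1)}\mathbb{S}^{d-1}$, $X\cap Y=sk_0Y$, $Y\cap Z=\emptyset$), and assembles the union by the same iterated homotopy-pushout computation, reaching the identical intermediate space $\bigvee_{\binom{r-1}{2}^n}\mathbb{S}^{2n-1}\vee\bigvee_{rf_{d-1}(r,n-1)}\mathbb{S}^{d}$ before wedging on the $\binom{r}{2}$ circles. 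The only differences are cosmetic: the paper dispatches $d=1$ by citing Lemma \ref{lemjoingraf} where you re-derive it uniformly (your count $r(nr-1)+\binom{r}{2}=(nr^2-1)+\binom{r-1}{2}$ is correct), and you spell out the connectivity-versus-dimension justification for the null-homotopies that the paper leaves implicit.
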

\begin{proof}
For $d=1$ the result follows from Lema \ref{lemjoingraf}.

We take $0,1,\dots,n$ as the vertices of $K_{1,n}$ with $0$ the vertex of degree $n$ and $K_r^i$ the copy of $K_r$ corresponding to the vertex $i$.

For $2\leq d\leq n-1$, $\mathcal{F}_d\left(K_{1,n}\circ K_r\right)=X\cup Y\cup Z$ where
$$X=V\left(K_r^0\right)*\aste{Z}_{_{\mathrm{sk}_{d-1}\Delta^{n-1}}}(\mathrm{sk}_0K_r,\emptyset),
\;\;\;Y=\mathcal{F}_d\left(K_r^0\right)\simeq\bigvee_{\binom{r-1}{2}}\mathbb{S}^1,$$
$$Z=\bigast_{i=1}^n\mathcal{F}_d(K_r^i)\simeq\bigvee_{\binom{r-1}{2}^n}\mathbb{S}^{2n-1}.$$
We have that $Y\cap Z=\emptyset$, $X\cap Y=\mathrm{sk}_0Y$ and 
$$X\cap Z=\aste{Z}_{_{\mathrm{sk}_{d-1}\Delta^{n-1}}}(\mathrm{sk}_0K_r,\emptyset)\simeq\bigvee_{f_{_{d-1}}(r,n-1)}\mathbb{S}^{d-1}$$
Once again we compute the homotopy type of union via homotopy pushouts as explained at the end of the preliminaries:
\begin{equation*}
\xymatrix{
\emptyset \ar@{->}[rr]^{\cong} \ar@{->}[dr] \ar@{->}[dd] & & \emptyset \ar@{-}[d] \ar@{->}[rd] & & \\
 & \displaystyle\bigvee_{r-1}\mathbb{S}^0 \ar@{->}[rr]^{\simeq} \ar@{->}[dd] & \ar@{->}[d] & \displaystyle\bigvee_{r-1}\mathbb{S}^0 \ar@{->}[r] \ar@{->}[dd] & \displaystyle\bigvee_{\binom{r-1}{2}}\mathbb{S}^1 \ar@{->}[dd] \\
\displaystyle\bigvee_{f_{_{d-1}}(r,n-1)}\mathbb{S}^{d-1} \ar@{-}[r] \ar@{->}[dr] & \ar@{->}[r]& \displaystyle\bigvee_{\binom{r-1}{2}^n}\mathbb{S}^{2n-1} \ar@{->}[dr] &  & \\
  & \displaystyle\bigvee_{(r-1)f_{_{d-1}}(r,n-1)}\mathbb{S}^{d} \ar@{->}[rr] &  & \hocolim(\mathcal{S}') \ar@{->}[r] & \hocolim(\mathcal{S})
}
\end{equation*}
where $\mathcal{S}'$ is the diagram of the bottom of the cube. Then
$$\hocolim(\mathcal{S}')\simeq\bigvee_{\binom{r-1}{2}^n}\mathbb{S}^{2n-1}\vee\bigvee_{rf_{_{d-1}}(r,n-1)}\mathbb{S}^{d}$$
and the rest follows from this.

Now, for $d=\infty$, $\mathcal{F}_\infty\left(K_{1,n}\circ K_r\right)=X\cup Y\cup Z$ where $Y$ and $Z$ are as before, and
$$X=\aste{Z}_{_{\Delta^n}}(\mathrm{sk}_0K_r,\emptyset)\simeq\bigvee_{(r-1)^{n+1}}\mathbb{S}^n.$$
As before, $Y\cap Z=\emptyset$, $X\cap Y=\mathrm{sk}_0Y$ and 
$$X\cap Z=\aste{Z}_{_{\Delta^{n-1}}}(\mathrm{sk}_0K_r,\emptyset)\simeq\bigvee_{(r-1)^n}\mathbb{S}^{n-1}.$$
Again we use the technique we've been using to compute the homotopy type of the union via homotopy pushouts:
\begin{equation*}
\xymatrix{
\emptyset \ar@{->}[rr]^{\cong} \ar@{->}[dr] \ar@{->}[dd] & & \emptyset \ar@{-}[d] \ar@{->}[rd] & & \\
 & \displaystyle\bigvee_{r-1}\mathbb{S}^0 \ar@{->}[rr] \ar@{->}[dd] & \ar@{->}[d] & \displaystyle\bigvee_{r-1}\mathbb{S}^0 \ar@{->}[r] \ar@{->}[dd] & \displaystyle\bigvee_{\binom{r-1}{2}}\mathbb{S}^1 \ar@{->}[dd] \\
\displaystyle\bigvee_{(r-1)^n}\mathbb{S}^{n-1} \ar@{-}[r] \ar@{->}[dr] & \ar@{->}[r] & \displaystyle\bigvee_{\binom{r-1}{2}^n}\mathbb{S}^{2n-1} \ar@{->}[dr] &  & \\
  & \displaystyle\bigvee_{(r-1)^{n+1}}\mathbb{S}^n \ar@{->}[rr] &  & \hocolim(\mathcal{S}') \ar@{->}[r] & \hocolim(\mathcal{S})
}
\end{equation*}
where $\mathcal{S}'$ again is the diagram of the bottom of the cube. Then 
$$\hocolim(\mathcal{S}')\simeq\bigvee_{\binom{r-1}{2}^n}\mathbb{S}^{2n-1}\vee\bigvee_{r(r-1)^n}\mathbb{S}^n.$$
The result follows from all these.
\end{proof}

\begin{prop}
For any integers $n,m,r\geq2$,
$$\Sigma\mathcal{F}_\infty(K_{n,m}\circ K_r)\simeq\bigvee_{\binom{r-1}{2}^n}\mathbb{S}^{2n}\vee\bigvee_{\binom{r-1}{2}^m}\mathbb{S}^{2m}\vee\bigvee_{a}\mathbb{S}^{n+1}\vee\bigvee_{b}\mathbb{S}^{m+1}\vee\bigvee_{c}\mathbb{S}^3,$$
where $a=m(r-1)^n+m^2(r-1)^m$, $b=n(r-1)^m+n^2(r-1)^n$ and $c=(rn-1)(rm-1)$.
\end{prop}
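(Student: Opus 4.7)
The plan is to decompose $R := \mathcal{F}_\infty(K_{n,m}\circ K_r)$ according to the structure of the forest condition and then compute $\Sigma R$ using the three-way homotopy pushout of the preliminaries. For a vertex subset $S$, writing $S_v := S\cap V(K_r^v)$, the induced graph $G[S]$ is a forest iff $|S_v|\leq 2$ for every $v$ (each $K_r$-copy is a clique), and if some $S_{a_i}$ has two vertices then $S_{b_j}=\emptyset$ for all $j$ (otherwise two $a$-vertices and any $b$-vertex form a triangle), symmetrically for the other side. With $A, B$ the two parts of $K_{n,m}$, this suggests setting
\[
X := \bigast_{i=1}^n \mathcal{F}_\infty(K_r^{a_i}),\qquad Y := \bigast_{j=1}^m \mathcal{F}_\infty(K_r^{b_j}),\qquad Z := \aste{Z}_{\mathcal{F}_\infty(K_{n,m})}(sk_0 K_r,\emptyset),
\]
so that $R = X\cup Y\cup Z$, with $X\cap Y$ a point, $X\cap Z = (sk_0 K_r)^{*n}$, and $Y\cap Z = (sk_0 K_r)^{*m}$.

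Using $\mathcal{F}_\infty(K_r) = sk_1\Delta^{r-1}\simeq\bigvee_{\binom{r-1}{2}}\mathbb{S}^1$, the joins give $X\simeq\bigvee_{\binom{r-1}{2}^n}\mathbb{S}^{2n-1}$ (and analogously for $Y$), while $(sk_0 K_r)^{*k}\simeq\bigvee_{(r-1)^k}\mathbb{S}^{k-1}$. The main step is computing $Z$. I would write $\mathcal{F}_\infty(K_{n,m}) = X_A\cup X_B$, where $X_A = sk_0\Delta^A*\Delta^B$ consists of simplices using at most one $A$-vertex and $X_B$ is symmetric, so that $X_A\cap X_B = sk_0\Delta^A*sk_0\Delta^B$ is the bipartite graph $K_{n,m}$ viewed as a $1$-complex. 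The identity $\aste{Z}_{K_1*K_2}(\underline{X},\underline{A})\cong \aste{Z}_{K_1}*\aste{Z}_{K_2}$, immediate from the definition when the vertex sets are disjoint, combined with $(N\text{ discrete points})*W\simeq\bigvee_{N-1}\Sigma W$, gives $\aste{Z}_{X_A}(sk_0 K_r,\emptyset)\simeq\bigvee_{(nr-1)(r-1)^m}\mathbb{S}^m$, an analogous formula for $X_B$, and $\aste{Z}_{X_A\cap X_B}(sk_0 K_r,\emptyset)\cong K_{nr,mr}\simeq\bigvee_{(nr-1)(mr-1)}\mathbb{S}^1$.

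The pushout $Z = \aste{Z}_{X_A}\cup_{\aste{Z}_{X_A\cap X_B}}\aste{Z}_{X_B}$ has its connecting inclusion going from a wedge of circles into simply connected targets (using $n,m\geq 2$), hence null-homotopic, so Lemma \ref{homocolimpegado} yields $Z\simeq\aste{Z}_{X_A}\vee\aste{Z}_{X_B}\vee\Sigma\aste{Z}_{X_A\cap X_B}$. Finally, plugging $X, Y, Z$ into the three-way homotopy pushout of the preliminaries and suspending, every connecting map has low-dimensional domain and sufficiently connected target, so all are null-homotopic, and repeated application of Lemma \ref{homocolimpegado} produces
\[
\Sigma R \simeq \Sigma X\vee \Sigma Y\vee \Sigma Z\vee \Sigma^2\bigl((X\cap Z)\vee(Y\cap Z)\bigr).
\]
Substituting the computed homotopy types and collecting sphere summands of equal dimension delivers the stated formula. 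The main obstacle is organising the calculation of $\Sigma Z$ — it contributes wedge pieces in three different sphere dimensions simultaneously — and verifying that null-homotopy genuinely holds for every connecting inclusion in the pushout diagrams so that Lemma \ref{homocolimpegado} may be applied throughout.
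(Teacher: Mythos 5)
Your decomposition $R=X\cup Y\cup Z$ is exactly the paper's (with the labels permuted: the paper calls your $Z$ ``$X$'' and your $X,Y$ ``$Y,W$''), but your computation of $Z=\aste{Z}_{\mathcal{F}_\infty(K_{n,m})}(sk_0K_r,\emptyset)$ is a genuinely different and more elementary route: the paper suspends first and invokes Theorem \ref{teosmashjoinpol} plus the wedge decomposition of Theorem \ref{polysmash}, which forces a case-by-case analysis of all links in $\mathcal{F}_\infty(K_{n,m})$, whereas you split $\mathcal{F}_\infty(K_{n,m})=X_A\cup X_B$, use join-multiplicativity of the polyhedral join, and apply Lemma \ref{homocolimpegado} once. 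Your answer checks against the paper's internal computation: $(mr-1)(r-1)^n=(m-1)(r-1)^n+m(r-1)^{n+1}=a'$ and $(nr-1)(mr-1)=c$, and your method even yields $Z$ (hence $R$) unsuspended, which is strictly more than the proposition asserts. Two smaller remarks: $X\cap Y$ is empty, not a point (so glue in two successive pushouts, as the paper does, rather than suspending a disjoint union); and what your bookkeeping actually yields for the $\mathbb{S}^{n+1}$-coefficient is $(mr-1)(r-1)^n+(r-1)^n=mr(r-1)^n=m(r-1)^n+m(r-1)^{n+1}$ --- this agrees with what the paper's own proof produces ($a'+(r-1)^n$), but not with the literal statement $a=m(r-1)^n+m^2(r-1)^m$, which appears to be a typo; so do not expect to ``deliver the stated formula'' verbatim.

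The genuine gap is the blanket claim that every connecting map is null-homotopic because of ``low-dimensional domain and sufficiently connected target''. For $X\cap Z\longhookrightarrow X$ this is fine (the domain has dimension $n-1$ and the target is $(2n-2)$-connected), and inside your pushout for $Z$ simple connectivity of $\aste{Z}_{X_A}$ and $\aste{Z}_{X_B}$ suffices since $n,m\geq2$. But for $X\cap Z\longhookrightarrow Z$ connectivity proves nothing: the domain $\bigvee_{(r-1)^n}\mathbb{S}^{n-1}$ maps into a wedge containing $\mathbb{S}^2$-summands and $\mathbb{S}^m$-summands with $m$ possibly much smaller than $n$, and suspending does not rescue the argument, since for instance $\pi_n$ of an $\mathbb{S}^3$-summand is typically nonzero ($\pi_4(\mathbb{S}^3)=\mathbb{Z}/2$). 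The null-homotopy is true but must be proved, and this is precisely where the paper supplies the missing idea: factor the inclusion as $\aste{Z}_{\Delta^{A}}(sk_0K_r,\emptyset)\longhookrightarrow\aste{Z}_{\Delta^{A\cup\{b\}}}(sk_0K_r,\emptyset)\longhookrightarrow Z$ for any vertex $b$ of the other part (legitimate because $A\cup\{b\}$ spans a star, hence a forest, in $K_{n,m}$); the first map has the form $P\longhookrightarrow P*Q$ with $Q\neq\emptyset$, which factors through the cone $P*\{pt\}$ and is therefore null-homotopic. With that substitution (and its mirror for $Y\cap Z$), your argument goes through and reproduces the paper's conclusion.
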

\begin{proof}
Assume $U=\{u_1,\dots,u_n\}$ and $V=\{v_1,\dots,v_m\}$ are the partition of the vertices of $K_{n,m}$. Taking
$$X=\aste{Z}_{_{\mathcal{F}_\infty(K_{n,m})}}\left(\mathrm{sk}_0K_r,\emptyset\right),\;Y=\aste{Z}_{_{\Delta^U}}\left(K_r,\emptyset\right),\;W=\aste{Z}_{_{\Delta^V}}\left(K_r,\emptyset\right),$$
we have that $\mathcal{F}_\infty(K_{n,m}\circ K_r)=X\cup Y\cup W$. Now, $Y\cap W=X\cap Y\cap W=\emptyset$ and
$$X\cap Y=\aste{Z}_{_{\Delta^U}}\left(\mathrm{sk}_0K_r,\emptyset\right), \;X\cap W=\aste{Z}_{_{\Delta^V}}\left(\mathrm{sk}_0K_r,\emptyset\right).$$
Taking any vertices $u_i\in U$ and $v_j\in V$, we can factor the inclusions to $X$ as
\begin{equation*}
    \xymatrix{
    X\cap Y \ar@{^(->}[r] & \aste{Z}_{_{\Delta^{U\cup\{v_j\}}}}\left(V(K_r),\emptyset\right) \ar@{^(->}[r] & X\\
    X\cap W \ar@{^(->}[r] & \aste{Z}_{_{\Delta^{V\cup\{u_i\}}}}\left(V(K_r),\emptyset\right) \ar@{^(->}[r] & X
    }
\end{equation*}
where the first inclusions are null-homotopic. Therefore 
$$\hocolim\left(X\cup Y \longhookleftarrow X\cap W\longhookrightarrow W\right)\simeq X\cup Y\vee W\vee\Sigma(X\cap W)$$
and 
$$\hocolim\left(X\longhookleftarrow X\cap Y\longhookrightarrow Y\right)\simeq X\vee Y\vee\Sigma(X\cap Y)$$
From where we obtain that 
$$\mathcal{F}_\infty(K_{n,m}\circ K_r)\simeq X\vee Y\vee W\vee\Sigma(X\cap W)\vee\Sigma(X\cap Y)$$
Now, for $\Sigma\mathcal{F}_\infty(K_{n,m}\circ K_r)$ we only need to determine the homotopy type of 
$\Sigma\aste{Z}_{_{\mathcal{F}_\infty(K_{n,m})}}\left(\mathrm{sk}_0K_r,\emptyset\right)$. Now, by Corollary \ref{corpolyjoinnullpairs}
$$\Sigma\aste{Z}_{_{\mathcal{F}_\infty(K_{n,m})}}\left(\mathrm{sk}_0K_r,\emptyset\right)\simeq\Sigma\mathcal{F}_\infty(K_{n,m})\vee\bigvee_{\sigma\in\mathcal{F}_\infty(K_{n,m})-\{\emptyset\}}\left(\bigvee_{(r-1)^{|\sigma|}}\Sigma^{|\sigma|+1}\mathrm{lk}(\sigma)\right).$$
If we take any two vertices from $U$ and any two from $V$, we get a cycle. Therefore $|\sigma\cap U|\leq1$ or $|\sigma\cap V|\leq1$ for 
any simplex $\sigma$. Take $\sigma\in\mathcal{F}_\infty(K_{n,m})$. There are two possibilities:
\begin{itemize}
    \item $\sigma$ is totally contained in $U$ or $V$. Assume $\sigma\subseteq U$. There are two cases:
    \begin{enumerate}
        \item If $|\sigma|=1$, then 
        $$\mathrm{lk}(\sigma)=\left(\mathrm{sk}_0\Delta^V*\Delta^{U-\sigma}\right)\cup\Delta^V$$
        and
        $$\mathrm{lk}(\sigma)\simeq \hocolim\left(*\longleftarrow \mathrm{sk}_0\Delta^V\longrightarrow*\right)\simeq\bigvee_{m-1}\mathbb{S}^1.$$
        \item If $|\sigma|>1$, then
        $$\mathrm{lk}(\sigma)\cong \mathrm{sk}_0\Delta^V*\Delta^{U-\sigma}\simeq\left\lbrace\begin{array}{cc}
            * &  \mbox{ if }|\sigma|<n\\
            \displaystyle\bigvee_{m-1}\mathbb{S}^0 &   \mbox{ if }|\sigma|=n.
        \end{array}\right.$$
    \end{enumerate}
    \item $\sigma\cap U\neq\emptyset\neq\sigma\cap V$. Assume $|\sigma\cap U|=1$. There are three cases:
    \begin{enumerate}
        \item If $2=|\sigma|$, then $\mathrm{lk}(\sigma)=\Delta^{V-\sigma}\sqcup\Delta^{U-\sigma}$ and thus is homotopy equivalent to 
        $\mathbb{S}^0$.
        \item If $2<|\sigma|<m+1$, then $\mathrm{lk}(\sigma)=\Delta^{V-\sigma}$ and therefore is contractible.
        \item If $|\sigma|=m+1$, then $\sigma$ is a maximal simplex and $\mathrm{lk}(\sigma)=\emptyset$.
    \end{enumerate}
\end{itemize}
In \citep{forestfilt} it is show that 
$$\mathcal{F}_\infty(K_{n,m})\simeq\bigvee_{(n-1)(m-1)}\mathbb{S}^2$$
Therefore:
$$\Sigma\aste{Z}_{_{\mathcal{F}_\infty(K_{n,m})}}\left(\mathrm{sk}_0K_r,\emptyset\right)\simeq\bigvee_{c}\mathbb{S}^3\vee\bigvee_{a'}\mathbb{S}^{n+1}\vee\bigvee_{b'}\mathbb{S}^{m+1},$$
where $a'=(m-1)(r-1)^{n}+m(r-1)^{n+1}$, $b'=(n-1)(r-1)^{m}+n(r-1)^{m+1}$ and $c=nm(r-1)^2+n(m-1)(r-1)+m(n-1)(r-1)+(n-1)(m-1)=(rn-1)(rm-1)$.
\end{proof}

\begin{theorem}\label{teomultinf}
For any positive integers $r,n_1,\dots,n_k\geq2$, with $k\geq3$ and $G=K_{n_1,\dots,n_k}\circ K_r$ we have that
$$\Sigma\mathcal{F}_\infty(G)\simeq\bigvee_{i=1}^k\left(\bigvee_{\binom{r-1}{2}^{n_i}}\mathbb{S}^{2n_i}\vee\bigvee_{a_i}\mathbb{S}^{n_i+1}\right)\vee\bigvee_b\mathbb{S}^3\vee\bigvee_{\binom{k-1}{2}}\mathbb{S}^2$$
where 
$$a_i=(r-1)^{n_i}+(t_i+1)(r-1)^{n_i+1}+t_i(r-1)^{n_i},$$
$$b=\sum_{i<j}(n_i-1)(n_j-1)+\sum_{i<j}n_in_j(r-1)^2+\sum_{i=1}^kt_in_i(r-1), \text{ and}$$
$$t_i=\sum_{j\neq i}n_j\;-1.$$
\end{theorem}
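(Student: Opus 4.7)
The plan is to adapt the strategy used in the previous proposition (the $k=2$ case) to the multipartite setting. Let $V_1,\dots,V_k$ be the parts of $K_{n_1,\dots,n_k}$. First I would decompose
$$\mathcal{F}_\infty(G)=X\cup Y_1\cup\dots\cup Y_k,$$
where $X=\aste{Z}_{\mathcal{F}_\infty(K_{n_1,\dots,n_k})}(sk_0K_r,\emptyset)$ collects the simplices of $\mathcal{F}_\infty(G)$ that use vertices from distinct copies of $K_r$, and $Y_i=\aste{Z}_{\Delta^{V_i}}(K_r,\emptyset)\cong K_r^{*n_i}$ collects those supported inside the $n_i$ copies of $K_r$ sitting over $V_i$. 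One has $Y_i\cap Y_j=\emptyset$ for $i\ne j$, $X\cap Y_i=\aste{Z}_{\Delta^{V_i}}(sk_0K_r,\emptyset)\simeq\bigvee_{(r-1)^{n_i}}\mathbb{S}^{n_i-1}$, and $Y_i\simeq\bigvee_{\binom{r-1}{2}^{n_i}}\mathbb{S}^{2n_i-1}$.

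Both inclusions $X\cap Y_i\hookrightarrow X$ and $X\cap Y_i\hookrightarrow Y_i$ are null-homotopic by the arguments used in the $k=2$ case: the first factors through the polyhedral join $\aste{Z}_{\Delta^{V_i\cup\{u\}}}(sk_0K_r,\emptyset)$ for any vertex $u$ from another part (which is a cone), and the second because $sk_0K_r\hookrightarrow K_r$ is null-homotopic and joins preserve null-homotopy. Iterating Lemma \ref{homocolimpegado}, using $Y_i\cap Y_j=\emptyset$ so that the intersection with $Y_i$ at each step is still $X\cap Y_i$, yields
$$\mathcal{F}_\infty(G)\simeq X\vee\bigvee_{i=1}^kY_i\vee\bigvee_{i=1}^k\Sigma(X\cap Y_i).$$
After suspending, $\Sigma Y_i$ contributes the $\bigvee_{\binom{r-1}{2}^{n_i}}\mathbb{S}^{2n_i}$ summand and $\Sigma^2(X\cap Y_i)$ contributes $(r-1)^{n_i}$ copies of $\mathbb{S}^{n_i+1}$.

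For $\Sigma X$, I apply Theorem \ref{teosmashjoinpol} and then Theorem \ref{polysmash} (valid because $\mathbb{S}^0\hookrightarrow\bigvee_{r-1}\mathbb{S}^1$ is null-homotopic) to obtain
$$\Sigma X\simeq\Sigma K\vee\bigvee_{\emptyset\ne\sigma\in K}\bigvee_{(r-1)^{|\sigma|}}\Sigma^{|\sigma|+1}lk_K(\sigma),$$
where $K=\mathcal{F}_\infty(K_{n_1,\dots,n_k})$. A subset $\sigma$ lies in $K$ iff its vertices are in at most two parts and, whenever exactly two parts $V_i,V_j$ are met, the induced bipartite graph is a star ($|\sigma\cap V_i|=1$ or $|\sigma\cap V_j|=1$). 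I would determine $K$'s homotopy type by Mayer--Vietoris on the cover $\{K_{ij}\}_{i<j}$ where $K_{ij}=\mathcal{F}_\infty(K_{n_i,n_j})\simeq\bigvee_{(n_i-1)(n_j-1)}\mathbb{S}^2$; the pairwise intersections are the contractible simplices $\Delta^{V_l}$ and triple intersections are either $\Delta^{V_l}$ or empty, which should yield $K\simeq\bigvee_{\binom{k-1}{2}}\mathbb{S}^1\vee\bigvee_{\sum_{i<j}(n_i-1)(n_j-1)}\mathbb{S}^2$, accounting for the $\binom{k-1}{2}\mathbb{S}^2$ summand and the first piece of $b$.

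The main obstacle is the case-analysis of the links $lk_K(\sigma)$ and the bookkeeping of contributions. I would split nonempty $\sigma\in K$ by whether $\sigma$ lies entirely in one part or meets two parts, and by how the sizes relate to the $n_i$'s. For a vertex $\sigma=\{v\}\subset V_i$ the link decomposes as $\bigcup_{j\ne i}lk_{K_{ij}}(v)$ glued along the contractible $\Delta^{V_i\setminus\{v\}}$, giving a wedge of $\mathbb{S}^1$'s when $n_i\ge2$ and a wedge of $\mathbb{S}^0$'s when $n_i=1$; for larger $\sigma$ in one part the link is contractible unless $\sigma$ exhausts the part (then it is a wedge of $\mathbb{S}^0$'s indexed by the remaining vertices); for $\sigma$ crossing two parts the link splits according to the star and is contractible, discrete, or empty depending on how $\sigma$ fills each side. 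Collecting $(r-1)^{|\sigma|}\Sigma^{|\sigma|+1}lk_K(\sigma)$ across all cases and summing by target dimension should recover the remaining $(t_i+1)(r-1)^{n_i+1}+t_i(r-1)^{n_i}$ part of $a_i$ and the $\sum_{i<j}n_in_j(r-1)^2+\sum_i t_in_i(r-1)$ part of $b$. Verifying that the counts match the stated coefficients exactly is the delicate step.
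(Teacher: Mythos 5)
Your plan follows the paper's proof almost step for step: the decomposition $\mathcal{F}_\infty(G)=X\cup Y_1\cup\cdots\cup Y_k$, the factorization of $X\cap Y_i\hookrightarrow X$ through the cone $\aste{Z}_{_{\Delta^{V_i\cup\{u\}}}}(sk_0K_r,\emptyset)$, the resulting splitting $X\vee\bigvee_iY_i\vee\bigvee_i\Sigma(X\cap Y_i)$, and the reduction of $\Sigma X$ via Theorems \ref{teosmashjoinpol} and \ref{polysmash} to $\Sigma K\vee\bigvee_{\sigma}\bigvee_{(r-1)^{|\sigma|}}\Sigma^{|\sigma|+1}lk_K(\sigma)$ are all exactly the paper's argument. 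The one structural deviation is that you derive the homotopy type of $K=\mathcal{F}_\infty(K_{n_1,\dots,n_k})$ from the cover $\{K_{ij}\}_{i<j}$ rather than citing \citep{forestfilt}; note this is not a nerve-lemma situation (the $K_{ij}$ themselves are not contractible), so Mayer--Vietoris only gives homology --- which does suffice here, since you only ever use $\Sigma K$, a simply connected space with free homology and hence a wedge of spheres.

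The genuine gap is the step you defer (``verifying that the counts match the stated coefficients''), and in fact it cannot be completed as announced. Your vertex-link computation is correct: for $v\in V_i$,
$$lk_K(v)=\left(\bigsqcup_{j\neq i}\Delta^{V_j}\right)\cup\left(\left(\bigsqcup_{j\neq i}sk_0\Delta^{V_j}\right)*\Delta^{V_i-v}\right),$$
and gluing the $k-1$ complexes $lk_{K_{ij}}(v)\simeq\bigvee_{n_j-1}\mathbb{S}^1$ along the common contractible $\Delta^{V_i-\{v\}}$ gives $\bigvee_{\sum_{j\neq i}(n_j-1)}\mathbb{S}^1=\bigvee_{t_i-k+2}\mathbb{S}^1$. (Check: the homotopy pushout of $\bigsqcup_{k-1}\ast\longleftarrow\bigsqcup_{t_i+1}\ast\longrightarrow\ast$ is a connected graph of Euler characteristic $k-1-t_i$, so first Betti number $t_i-k+2$; concretely, for $K_{2,2,2}$ the link of a vertex is a graph with $5$ vertices and $6$ edges, i.e.\ $\bigvee_2\mathbb{S}^1$, not $\bigvee_3\mathbb{S}^1$.) These simplices therefore contribute $\sum_i n_i(t_i-k+2)(r-1)$ copies of $\mathbb{S}^3$, which differs from the stated $\sum_i t_in_i(r-1)$ for every $k\geq3$, so your closing assertion that the bookkeeping ``should recover'' the stated $b$ is false. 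The discrepancy is not your error: the paper's own proof collapses $\bigsqcup_{j\neq i}\Delta^{V_j}$, which has $k-1$ contractible components, to a single point in the homotopy pushout and so obtains $\bigvee_{t_i}\mathbb{S}^1$; both the $k=2$ proposition (vertex link $\bigvee_{m-1}\mathbb{S}^1$) and the later theorem on $\mathcal{F}_d(K_{n_1,\dots,n_k}\circ K_r)$ (whose coefficient $b_d$ correctly carries $t_i-k+2$) corroborate your count. Carried out honestly, your computation proves the theorem with $b$ corrected to $\sum_{i<j}(n_i-1)(n_j-1)+\sum_{i<j}n_in_j(r-1)^2+\sum_i n_i(t_i-k+2)(r-1)$; the remaining coefficients ($a_i$, with $(r-1)^{n_i}$ from $\Sigma^2(X\cap Y_i)$, $t_i(r-1)^{n_i}$ from $\sigma=V_i$ with link $\bigsqcup_{j\neq i}sk_0\Delta^{V_j}\simeq\bigvee_{t_i}\mathbb{S}^0$, and $(t_i+1)(r-1)^{n_i+1}$ from the maximal simplices with empty link, plus the $\binom{k-1}{2}$ copies of $\mathbb{S}^2$) do come out as stated from your case analysis.
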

\begin{proof}
$$\mathcal{F}_\infty(G)=\aste{Z}_{_{\mathcal{F}_\infty(K_{n_1,\dots,n_k})}}\left(\mathrm{sk}_0K_r,\emptyset\right)\cup\bigsqcup_{i=1}^k\aste{Z}_{_{\Delta^{V_i}}}\left(K_r,\emptyset\right)$$
For all $i$, 
$$\aste{Z}_{_{\mathcal{F}_\infty(K_{n_1,\dots,n_k})}}\left(\mathrm{sk}_0K_r,\emptyset\right)\cap \aste{Z}_{_{\Delta^{V_i}}}\left(K_r,\emptyset\right)=\aste{Z}_{_{\Delta^{V_i}}}\left(\mathrm{sk}_0K_r,\emptyset\right).$$
As in the proposition before, we take $v\in V_j$ with $j\neq i$. Then the inclusion factors as
$$\aste{Z}_{_{\Delta^{V_i}}}\left(\mathrm{sk}_0K_r,\emptyset\right)\longhookrightarrow \aste{Z}_{_{\Delta^{V_i\cup\{v\}}}}\left(\mathrm{sk}_0K_r,\emptyset\right)\longhookrightarrow\aste{Z}_{_{\mathcal{F}_\infty(K_{n_1,\dots,n_k})}}\left(\mathrm{sk}_0K_r,\emptyset\right),$$
and is thus null-homotopic. Therefore,
$$\mathcal{F}_\infty(G)\simeq\aste{Z}_{_{\mathcal{F}_\infty(K_{n_1,\dots,n_k})}}\left(\mathrm{sk}_0K_r,\emptyset\right)\vee\bigvee_{i=1}^k\left(\aste{Z}_{_{\Delta^{V_i}}}\left(K_r,\emptyset\right)\vee\Sigma\aste{Z}_{_{\Delta^{V_i}}}\left(\mathrm{sk}_0K_r,\emptyset\right)\right).$$
For the suspension, as in the last proposition, we have that
$$\Sigma\aste{Z}_{_{\mathcal{F}_\infty(K_{n_1,\dots,n_k})}}\left(\mathrm{sk}_0K_r,\emptyset\right)\simeq\Sigma\mathcal{F}_\infty(K_{n_1,\dots,n_k})\vee\bigvee_{\sigma\in\mathcal{F}_\infty(K_{n_1,\dots,n_k})}\mathrm{lk}(\sigma)*\aste{D}(\sigma)$$
$$\simeq\Sigma\mathcal{F}_\infty(K_{n_1,\dots,n_k})\vee\bigvee_{\sigma\in\mathcal{F}_\infty(K_{n_1,\dots,n_k})}\bigvee_{(r-1)^{|\sigma|}}\Sigma^{|\sigma|+1}\mathrm{lk}(\sigma).$$
Now (see \cite[Corollary 35]{forestfilt}), 
$$\Sigma\mathcal{F}_\infty(K_{n_1,\dots,n_k})\simeq\bigvee_{\binom{k-1}{2}}\mathbb{S}^2\vee\bigvee_{i<j}\Sigma\mathcal{F}_\infty(K_{n_i,n_j}).$$
Because any three vertices $v_i\in V_i$, $v_j\in V_j$ and $v_l\in V_l$, with $i<j<l$, form a cycle, the simplexes with vertices in 
two $V_i,V_j$, with $i\neq j$, have the link as in the corresponding bipartite graph. Therefore, if $\sigma$ is a simplex such that
$\sigma\cap V_i\neq\emptyset\neq\sigma\cap V_j$, for $i\neq j$, and $|\sigma\cap V_i|=1$, there are three cases:
    \begin{enumerate}
        \item If $2=|\sigma|$, then $\mathrm{lk}(\sigma)=\Delta^{V_j-\sigma}\sqcup\Delta^{V_j-\sigma}$ and thus is homotopy equivalent to 
        $\mathbb{S}^0$.
        \item If $2<|\sigma|<n_j+1$, then $\mathrm{lk}(\sigma)=\Delta^{V_j-\sigma}$ and therefore is contractible.
        \item If $|\sigma|=n_j+1$, then $\sigma$ is a maximal simplex and $\mathrm{lk}(\sigma)=\emptyset$.
    \end{enumerate}
Now if $\sigma$ is a simplex such that $\sigma\subseteq V_i$, then 
\begin{enumerate}
        \item If $|\sigma|=1$, then
        $$\mathrm{lk}(\sigma)\cong\left(\left(\bigsqcup_{j\neq i}\mathrm{sk}_0\Delta^{V_j}\right)*\Delta^{V_i-\sigma}\right)\cup\left(\bigsqcup_{j\neq i}\Delta^{V_j}\right)$$
        and
        $$\mathrm{lk}(\sigma)\simeq \hocolim\left(*\longleftarrow \bigsqcup_{j\neq i}\mathrm{sk}_0\Delta^{V_j} \longrightarrow*\right)\simeq\bigvee_{t_i}\mathbb{S}^1.$$
        \item If $|\sigma|>1$, then
        $$\mathrm{lk}(\sigma)\cong\left(\bigsqcup_{j\neq i}\mathrm{sk}_0\Delta^{V_j}\right)*\Delta^{V_i-\sigma}\simeq\left\lbrace\begin{array}{cc}
            * &  \mbox{ if }|\sigma|<n_i\\
            \displaystyle\bigvee_{t_i}\mathbb{S}^0 &   \mbox{ if }|\sigma|=n_i
        \end{array}\right.$$
    \end{enumerate}
The result follows from all these.
\end{proof}

\begin{theorem}
For $1\leq d\leq\min\{n-1,m-1\}$, 
$$\Sigma\mathcal{F}_d(K_{n,m}\circ K_r)\simeq\bigvee_{a_d}\mathbb{S}^2\vee\bigvee_{b_d}\mathbb{S}^3\vee\bigvee_{c_d}\mathbb{S}^{d+1}\vee\bigvee_{\binom{r-1}{2}^n}\mathbb{S}^{2n}\vee\bigvee_{\binom{r-1}{2}^m}\mathbb{S}^{2m},$$
where $a_1=r^2nm-1$, $b_1=c_1=0$ and, for $d\geq2$, $a_d=(n+m)(r-1)$, $b_d=nm(r-1)^2+(m-1)(n-1)$, and
$$c_d=n\binom{m-1}{d}r+m\binom{n-1}{d}r+\left[n\binom{m}{d}+m\binom{n}{d}\right](r-1)^{d+1}+mn\left[\binom{n-2}{d-1}+\binom{m-2}{d-1}\right](r-1)^2$$
$$+\sum_{i=2}^d\left[m\binom{n}{i}\binom{n-i-1}{d-i}+n\binom{m}{i}\binom{m-i-1}{d-i}\right](r-1)^i$$
$$+\sum_{i=3}^d\left[m\binom{n}{i-1}\binom{n-i}{d-i}+n\binom{m}{i-1}\binom{m-i}{d-i}\right](r-1)^i$$
\end{theorem}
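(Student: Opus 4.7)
The plan is to adapt the three-piece decomposition used in the preceding proposition to the degree-$d$ setting. Write $\mathcal{F}_d(K_{n,m}\circ K_r)=X\cup Y\cup W$ with
$$X=\aste{Z}_{\mathcal{F}_d(K_{n,m})}(sk_0 K_r,\emptyset),\qquad Y\cong K_r^{*n},\qquad W\cong K_r^{*m};$$
this is exhaustive because any cross-simplex of $\mathcal{F}_d(K_{n,m}\circ K_r)$ with at least two vertices in some single copy of $K_r$ contains a triangle (an intra-copy edge combined with the two cross-edges to a vertex on the opposite side). The intersections are $Y\cap W=\emptyset$, $X\cap Y\cong(sk_0 K_r)^{*n}$, and $X\cap W\cong(sk_0 K_r)^{*m}$.

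Then suspend and apply the corollary of Theorem \ref{teosmashjoinpol} (whose hypothesis holds since the inclusion $\mathbb{S}^0\hookrightarrow\bigvee_{r-1}\mathbb{S}^1$ is null-homotopic) to compute
$$\Sigma X\simeq \Sigma\mathcal{F}_d(K_{n,m})\vee\bigvee_{\sigma\in\mathcal{F}_d(K_{n,m})}\bigvee_{(r-1)^{|\sigma|}}\Sigma^{|\sigma|+1}lk(\sigma),$$
together with $\Sigma Y\simeq\bigvee_{\binom{r-1}{2}^n}\mathbb{S}^{2n}$ and $\Sigma W\simeq\bigvee_{\binom{r-1}{2}^m}\mathbb{S}^{2m}$, which already give the top-dimensional sphere summands in the statement. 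The combinatorial heart is then a systematic inventory over $\mathcal{F}_d(K_{n,m})$: a simplex $\sigma=\sigma_U\sqcup\sigma_V$ lies in $\mathcal{F}_d(K_{n,m})$ iff $\min(|\sigma_U|,|\sigma_V|)\leq 1$ and $\max(|\sigma_U|,|\sigma_V|)\leq d$. Splitting by the shape of $\sigma$ (contained in $U$, in $V$, or cross with smaller side of size $1$) and by $|\sigma|$ relative to $d,n,m$ (deep-interior, near-maximal, maximal) identifies every link as either empty, contractible, discrete, a wedge of circles, or a skeleton of a simplex; the global pushout structure $\mathcal{F}_d(K_{n,m}\circ K_r)\simeq X\cup_{X\cap Y}Y\cup_{X\cap W}W$, upon suspension, produces two cell attachments whose effect on $\Sigma X$ must be tracked explicitly.

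The main obstacle is twofold: (1) the exhaustive link enumeration, which must simultaneously track $|\sigma|$, the shape of $\sigma$, and the relations with $d$, $n$, and $m$, and which is the source of the many binomial convolutions in $c_d$ and of the values of $a_d,b_d$; and (2) the cell-attachment analysis of the suspended pushouts, which is more delicate than in the $d=\infty$ case. There the inclusion $X\cap Y\hookrightarrow X$ was null-homotopic because it factored through $(sk_0 K_r)^{*(n+1)}\subseteq X$ and the inclusion of a factor into a join is null-homotopic; this factorisation is unavailable when $d<n$ because $U\cup\{v\}\notin\mathcal{F}_d(K_{n,m})$. Instead the suspended attachments genuinely cancel certain generators of $\Sigma X$ coming from maximal simplices of $\mathcal{F}_d(K_{n,m})$, and a careful count must confirm that no residual $\mathbb{S}^{n+1}$ or $\mathbb{S}^{m+1}$ summands remain. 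The sharply different coefficient $a_1=r^2nm-1$ versus $a_d=(n+m)(r-1)$ for $d\geq 2$ reflects the change in cancellation pattern at the extreme case $d=1$, where the matching-complex structure of $\mathcal{F}_1(K_{n,m})$ collapses most link contributions into a single $\mathbb{S}^2$-summand.
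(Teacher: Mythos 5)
Your opening is the same as the paper's: the decomposition $\mathcal{F}_d(K_{n,m}\circ K_r)=X\cup Y\cup W$ with $X=\aste{Z}_{\mathcal{F}_d(K_{n,m})}(sk_0K_r,\emptyset)$, the same intersections, and the plan to suspend and invoke Theorems \ref{teosmashjoinpol} and \ref{polysmash}; you also correctly locate the crux, namely that $X\cap Y\hookrightarrow X$ and $X\cap W\hookrightarrow X$ are \emph{not} null-homotopic for $d<\min\{n,m\}$, so the factorization trick of the $d=\infty$ case is unavailable. But there are two genuine problems. First, your characterization of the simplices of $\mathcal{F}_d(K_{n,m})$ is wrong: a subset contained entirely in $U$ or entirely in $V$ is an independent set, hence a forest of maximal degree $0$, hence a simplex for every $d$ with \emph{no} bound on its size. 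The correct statement is that $\sigma=\sigma_U\sqcup\sigma_V$ is a simplex iff $\sigma_U=\emptyset$, or $\sigma_V=\emptyset$, or $\min(|\sigma_U|,|\sigma_V|)=1$ and $\max(|\sigma_U|,|\sigma_V|)\leq d$. Your condition ``$\max(|\sigma_U|,|\sigma_V|)\leq d$'' excludes $\Delta^U$ and $\Delta^V$ (recall $d\leq\min\{n-1,m-1\}$), which contradicts your own claim $X\cap Y\cong(sk_0K_r)^{*n}$ (this needs $U\in\mathcal{F}_d(K_{n,m})$), and it deletes from your link inventory exactly the simplices $\sigma\subseteq U$ with $|\sigma|>d$; among these, $\sigma=U$ has empty link and contributes $\bigvee_{(r-1)^n}\mathbb{S}^{n}$ to $\Sigma X$ (dimension $n$, not $n+1$, since $\emptyset * \hat{D}(\sigma)=\hat{D}(\sigma)$). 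These are precisely the residual classes your last paragraph gestures at, so the error sits at the heart of the computation of $a_d,b_d,c_d$, not at its periphery.

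Second, and more seriously, ``the suspended attachments genuinely cancel certain generators\dots a careful count must confirm'' is the entire difficulty, and a count cannot settle it: coning off $X\cap Y\simeq\bigvee_{(r-1)^n}\mathbb{S}^{n-1}$ kills the corresponding $n$-dimensional classes of $\Sigma X$ only if the suspended inclusion maps onto that wedge summand by an equivalence; if the induced map were trivial, the cones would instead \emph{create} new spheres. You must therefore control the attaching maps up to homotopy, and your proposal supplies no mechanism for this. The paper's key idea, missing from your outline, makes the cancellation structural: after replacing $Y$ and $W$ by cones (legitimate because $X\cap Y\hookrightarrow Y$ and $X\cap W\hookrightarrow W$ \emph{are} null-homotopic), it identifies the resulting space $\hocolim(\mathcal{S})$ with a single polyhedral join $\aste{Z}_{K}(\underline{L},\underline{\emptyset})$, where $K=\mathcal{F}_d(K_{n,m})\cup(\Delta^U*\{u_0\})\cup(\Delta^V*\{v_0\})\simeq\mathcal{F}_d(K_{n,m})$ for new vertices $u_0,v_0$, with $L_w=sk_0K_r$ at the original vertices and $L_{u_0}=L_{v_0}=pt$. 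Then Theorems \ref{teosmashjoinpol} and \ref{polysmash} apply to the coned space in one stroke: any $\sigma$ containing $u_0$ or $v_0$ has $\hat{D}(\sigma)\simeq *$, and the dangerous simplices $\sigma\subseteq U$ with $|\sigma|>d$ now have link $\Delta^{U'-\sigma}$, a cone on $u_0$, hence contractible --- so no residual spheres survive and no induced-map analysis is ever needed, with all links computed in $K$ rather than in $\mathcal{F}_d(K_{n,m})$. Without this step (or an equivalent identification of the attaching maps), knowing the homology ranks of $\Sigma X$ and of the cones is not enough to conclude that the result is again a wedge of spheres with the stated coefficients.
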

\begin{proof}
Assume $U=\{u_1,\dots,u_n\}$ and $V=\{v_1,\dots,v_m\}$ are the partition of the vertices of $K_{n,m}$. Taking
$$X=\aste{Z}_{_{\mathcal{F}_d(K_{n,m})}}\left(\mathrm{sk}_0K_r,\emptyset\right),\;Y=\aste{Z}_{_{\Delta^U}}\left(K_r,\emptyset\right),\;W=\aste{Z}_{_{\Delta^V}}\left(K_r,\emptyset\right),$$
we have that $\mathcal{F}_d(K_{n,m}\circ K_r)=X\cup Y\cup W$. Now, $Y\cap W=X\cap Y\cap W=\emptyset$ and
$$X\cap Y=\aste{Z}_{_{\Delta^U}}\left(\mathrm{sk}_0K_r,\emptyset\right), \;X\cap W=\aste{Z}_{_{\Delta^V}}\left(V(K_r),\emptyset\right).$$
The inclusions $X\cap Y\longhookrightarrow Y$ and $X\cap Z\longhookrightarrow Z$ are null-homotopic, therefore 
$$\mathcal{F}_d(K_{n,m}\circ K_r)\simeq\bigvee_{\binom{r-1}{2}^n}\mathbb{S}^{2n-1}\vee\bigvee_{\binom{r-1}{2}^m}\mathbb{S}^{2m-1}\vee \hocolim(\mathcal{S}),$$
where
\begin{equation*}
    \xymatrix{
    \mathcal{S}: & \ast\sqcup\ast \ar@{<-}[r] & X\cap Y\sqcup X\cap W \ar@{^(->}[r] & X
    }
\end{equation*}
Now, if we define a new complex $K$ from $\mathcal{F}_d(K_{n,m})$ by gluing 
two new simplexes $\Delta^U*\{u_0\}$ and $\Delta^V*\{v_0\}$, with $u_0,v_0$ new vertices, we have that $K\simeq\mathcal{F}_d(K_{n,m})$
and 
$$\hocolim(\mathcal{S})\cong\aste{Z}_{_{K}}\left(\underline{L},\underline{\emptyset}\right),$$
where $L_{u_i}=\mathrm{sk}_0K_r=L_{v_j}$ for $i,j>0$ and $L_{u_0}=pt=L_{v_0}$. Now, 
$$\Sigma\aste{Z}_{_{K}}\left(\underline{L},\underline{\emptyset}\right)\simeq\Sigma\mathcal{F}_d(K_{n,m})\vee\bigvee_{\sigma\in K}\bigvee_{(r-1)^{|\sigma|}}\Sigma^{|\sigma|+1}\mathrm{lk}(\sigma).$$
For any $\sigma$ which contains $u_0$ or $v_0$, $\hat{D}(\sigma)\simeq*$, therefore we only need to know the link for simplexes without
those vertices. Take $U'=U\cup\{u_0\}$ and $V'=V\cup\{v_0\}$
\begin{itemize}
    \item If $\sigma\subseteq U$, there are three possibilities:
    \begin{enumerate}
        \item If $|\sigma|=1$, then $\displaystyle \mathrm{lk}(\sigma)=\Delta^{U'-\{u_i\}}\sqcup \mathrm{sk}_{d-1}\Delta^{V'}\simeq\bigvee_{\binom{m-1}{d}}\mathbb{S}^{d-1}\vee\mathbb{S}^0$.
        \item If $2\leq|\sigma|\leq d$, then
        $$\mathrm{lk}(\sigma)=\left(\bigvee_{m-1}\mathbb{S}^0*\mathrm{sk}_{d-|\sigma|-1}\Delta^{U'-\sigma}\right)\cup\Delta^{U'-\sigma}\simeq\bigvee_{m\binom{n-|\sigma|-1}{d-|\sigma|}}\mathbb{S}^{d-|\sigma|}.$$
        \item If $|\sigma|\geq d+1$, then $\mathrm{lk}(\sigma)=\Delta^{U'-\sigma}\simeq*$.
    \end{enumerate}
    \item Assume $|\sigma\cap U|=1$ and $|\sigma|\geq2$.
    \begin{enumerate}
        \item If $|\sigma|=2$, for $d\geq2$
        $$\mathrm{lk}(\sigma)=\mathrm{sk}_{d-2}\Delta^{U'-\sigma}\sqcup \mathrm{sk}_{d-2}\Delta^{V'-\sigma}\simeq\bigvee_{\binom{n-2}{d-1}}\mathbb{S}^{d-2}\sqcup\bigvee_{\binom{m-2}{d-1}}\mathbb{S}^{d-2}.$$
        \item If $3\leq|\sigma|\leq d$, then 
        $$\mathrm{lk}(\sigma)=\mathrm{sk}_{d-|\sigma|}\Delta^{V'-\sigma}\simeq\bigvee_{\binom{m-|\sigma|}{d-|\sigma|}}\mathbb{S}^{d-|\sigma|}.$$
        \item If $|\sigma|=d+1$, then $\mathrm{lk}(\sigma)=\emptyset$.
    \end{enumerate}
\end{itemize}
Now 
$$\mathcal{F}_d(K_{n,m})\simeq\bigvee_{(n-1)(m-1)}\mathbb{S}^2\vee\bigvee_{n{\binom{m-1}{d}}+m{\binom{n-1}{d}}}\mathbb{S}^d,$$
(see \cite[Theorem 33]{forestfilt}). These the result follows from all these.
\end{proof}

\begin{theorem}\label{teomultford}
For any positive integers $r,n_1,\dots,n_k\geq2$, with $k\geq3$ and $G=K_{n_1,\dots,n_k}\circ K_r$ we have for 
$1\leq d\leq\min\{n_1-1,\dots,n_k-1\}$ that
$$\Sigma\mathcal{F}_d(G)\simeq\bigvee_{a_d}\mathbb{S}^2\vee\bigvee_{b_d}\mathbb{S}^3\vee\bigvee_{c_d}\mathbb{S}^{d+1}\vee\bigvee_{i=1}^k\left(\bigvee_{\binom{r-1}{2}^{n_i}}\mathbb{S}^{2n_i}\right),$$
where $b_1=c_1=0$, 
$$a_1=\sum_{\{i,j\}\in\binom{\underline{k}}{2}}(r^2-2r+2)n_in_j\;+\sum_{i=1}^kn_i(t_i+1)(r-1)\;-k+1,$$
and for $d\geq2$,  $a_d=\frac{(k-1)(k-2)}{2}$,
$$b_d=\sum_{i=1}^kn_i(t_i-k+2)(r-1)+\sum_{\{i,j\}\in\binom{\underline{k}}{2}}(n_in_j(r-1)^2+(n_i-1)(n_j-1))$$
taking 
$$t_i=\sum_{j\neq i}n_j\;-1,\;\;\;\;p_i=\sum_{j\neq i}\binom{n_j-2}{d}$$
$$c_d=\sum_{i=1}^kn_i\left[(t_i+1)\binom{n_i-2}{d-1}+p_i\right](r-1)+\sum_{i=1}^k\sum_{l=2}^d\left[(t_i+1)\binom{n_i}{l}\binom{n_i-l-1}{d-l}(r-1)^l\right]$$
$$+\sum_{\{i,j\}\in\binom{\underline{k}}{2}}\left[n_in_j\left(\binom{n_i-1}{d-2}+\binom{n_j-1}{d-2}\right)(r-1)^2+n_i\binom{n_j-1}{d}+n_j\binom{n_i-1}{d}\right]$$
$$+\sum_{\{i,j\}\in\binom{\underline{k}}{2}}\left[n_i\binom{n_j}{d}+n_j\binom{n_i}{d}\right](r-1)^{d+1}$$
$$+\sum_{\{i,j\}\in\binom{\underline{k}}{2}}\sum_{l=2}^{d-1}\left[n_j\binom{n_i}{l}\binom{n_i-1}{d-l-1}+n_i\binom{n_j}{l}\binom{n_j-1}{d-l-1}\right](r-1)^{l+1}$$
\end{theorem}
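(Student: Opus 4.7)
The plan is to mirror the preceding bipartite theorem's proof, now with $k \geq 3$ blocks. Setting
$$X = \aste{Z}_{\mathcal{F}_d(K_{n_1,\dots,n_k})}(sk_0 K_r, \emptyset) \quad \text{and} \quad Y_i = \aste{Z}_{\Delta^{V_i}}(K_r, \emptyset),$$
one has $\mathcal{F}_d(G) = X \cup \bigcup_{i=1}^k Y_i$ with the $Y_i$ pairwise disjoint and $X \cap Y_i = \aste{Z}_{\Delta^{V_i}}(sk_0 K_r, \emptyset)$. Each $Y_i \simeq \bigvee_{\binom{r-1}{2}^{n_i}} \mathbb{S}^{2n_i-1}$ is $(2n_i-2)$-connected, while $X \cap Y_i$ has dimension $n_i - 1$, so the inclusion $X \cap Y_i \hookrightarrow Y_i$ is null-homotopic and Lemma \ref{homocolimpegado} peels off the $Y_i$ as wedge summands. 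The remaining gluing is encoded by a polyhedral join $\aste{Z}_K(\underline{L}, \underline{\emptyset})$ over an auxiliary complex $K$ obtained from $\mathcal{F}_d(K_{n_1,\dots,n_k})$ by gluing on simplices $\Delta^{V_i} \ast \{v_i^*\}$ for $k$ new vertices $v_i^*$, with $L_v = sk_0 K_r$ at old vertices and $L_{v_i^*} = \mathrm{pt}$; then $K \simeq \mathcal{F}_d(K_{n_1,\dots,n_k})$.

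Applying Theorems \ref{teosmashjoinpol} and \ref{polysmash} gives
$$\Sigma \aste{Z}_K(\underline{L}, \underline{\emptyset}) \simeq \Sigma \mathcal{F}_d(K_{n_1,\dots,n_k}) \vee \bigvee_{\sigma \in K} \bigvee_{(r-1)^{|\sigma|}} \Sigma^{|\sigma|+1} lk_K(\sigma),$$
and only $\sigma \in \mathcal{F}_d(K_{n_1,\dots,n_k})$ contribute nontrivially, since the smash factor associated with any $v_i^*$ is a point. Because three vertices from three distinct blocks form a triangle, every such $\sigma$ is either (A) a subset of a single $V_i$, or (B) a star $\{u\} \cup T$ with $u \in V_i$, $T \subseteq V_j$ for exactly one $j \neq i$, and $|T| \leq d$. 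I would split each type by $|\sigma|$ and identify $lk_K(\sigma)$ as a disjoint union or join of a skeleton of $\Delta^{V_i - \sigma}$ with the discrete $sk_0$ of the complementary blocks (and the added cones through the $v_l^*$), directly generalizing the bipartite case analysis. For the $\Sigma \mathcal{F}_d(K_{n_1,\dots,n_k})$ summand I would invoke the inductive decomposition $\Sigma \mathcal{F}_d(K_{n_1,\dots,n_k}) \simeq \bigvee_{\binom{k-1}{2}} \mathbb{S}^2 \vee \bigvee_{i<j} \Sigma \mathcal{F}_d(K_{n_i, n_j})$ in the spirit of \cite{forestfilt}, together with the bipartite formula $\mathcal{F}_d(K_{n,m}) \simeq \bigvee \mathbb{S}^2 \vee \bigvee \mathbb{S}^d$ already quoted there.

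The main obstacle is the combinatorial bookkeeping. Each link type contributes a wedge of spheres with multiplicity $(r-1)^{|\sigma|}$ times a binomial count of simplices of that shape, and these must be summed over all cases to produce the polynomials $a_d$, $b_d$, $c_d$. The $d = 1$ case is degenerate, since stars must have size exactly $2$ and most type-(B) contributions collapse, which explains why $b_1 = c_1 = 0$ and $a_1$ takes its own form; it is cleanest to handle separately by direct count. For $d \geq 2$, the link sums reindexed over $i$ (single-block simplices) and over $\{i, j\} \in \binom{\underline{k}}{2}$ (stars) yield the nested sums defining $c_d$: the $(t_i + 1)\binom{n_i - 2}{d-1}$ and $p_i$ terms arise from linking singletons in $V_i$, while the sums over $l$ come from larger subsets of a single block at the boundary of the $d$-forest condition. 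Checking that every simplex is counted exactly once and that the signs and binomial rearrangements collapse to the stated closed form is the most delicate --- though entirely routine --- part of the argument.
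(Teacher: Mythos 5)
Your proposal reproduces the paper's proof essentially step for step: the same decomposition $\mathcal{F}_d(G)=X\cup\bigcup_{i=1}^kY_i$ with the $Y_i$ peeled off as wedge summands, the same auxiliary complex $K$ built by coning each $\Delta^{V_i}$ off a new vertex with the point pair placed there, the same application of Theorems \ref{teosmashjoinpol} and \ref{polysmash}, the same two-case link analysis (simplices in one block versus stars meeting exactly two blocks), and the same decomposition of $\mathcal{F}_d(K_{n_1,\dots,n_k})$ quoted from \citep{forestfilt}. The only (harmless) difference is that you justify the null-homotopy of $X\cap Y_i\longhookrightarrow Y_i$ by comparing dimension with connectivity, where the paper simply asserts it; the bookkeeping you defer as routine is carried out, but not in greater detail, in the paper itself.
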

\begin{proof}
Assume $V_1,\dots,V_k$ are the partition of the vertices of $K_{n_1,\dots,n_k}$. We have
$$\mathcal{F}_d(G)=\aste{Z}_{_{\mathcal{F}_d(K_{n_1,\dots,n_k})}}\left(\mathrm{sk}_0K_r,\emptyset\right)\cup\bigsqcup_{i=1}^k\left(\bigast_{n_i}K_r\right).$$
For all $1\leq i\leq k$,
$$\aste{Z}_{_{\mathcal{F}_d(K_{n_1,\dots,n_k})}}\left(\mathrm{sk}_0K_r,\emptyset\right)\cap\bigast_{n_i}K_r=\aste{Z}_{_{\Delta^{V_i}}}\left(\mathrm{sk}_0K_r,\emptyset\right),$$
and the inclusion $\displaystyle\aste{Z}_{_{\Delta^{V_i}}}\left(\mathrm{sk}_0K_r,\emptyset\right)\longhookrightarrow\bigast_{n_i}K_r$ is 
null-homotopic. Therefore 
$$\Sigma\mathcal{F}_d(G)=\Sigma\aste{Z}_{_{K}}\left(\underline{L},\underline{\emptyset}\right)\vee\bigvee_{i=1}^k\left(\bigvee_{\binom{r-1}{2}^{n_i}}\mathbb{S}^{2n_i}\right),$$
where:
\begin{itemize}
    \item $K$ is the complex obtain from $\mathcal{F}_d(K_{n_1,\dots,n_k})$ by ading the simplexes $\Delta^{V_i'}$, 
    where $V_i'=V_i\cup\{v_i^0\}$ with $v_i^0$ a new vertice. Clearly $K\simeq\mathcal{F}_d(K_{n_1,\dots,n_k})$.
    \item $L_u=\mathrm{sk}_0K_r$ for any $u\in V(G)$.
    \item $L_{v_i^0}=pt$ for all $i$.
\end{itemize}
As before,
$$\Sigma\aste{Z}_{_{K}}\left(\underline{L},\underline{\emptyset}\right)\simeq\Sigma K\vee\bigvee_{\sigma\in K}\bigvee_{(r-1)^{|\sigma|}}\Sigma^{|\sigma|+1}\mathrm{lk}(\sigma).$$
Since any three vertices form three different sets of the vertex partition give a cycle and for any $\sigma$ which contains a $v_i^0$ 
we have that $\hat{D}(\sigma)\simeq*$, 
the only links we need to determine are those of simplexes contain in one or two sets and that do not contain a vertex $v_i^0$. 

Let $\sigma$ be a simplex such that $\sigma\subseteq V_i$ for some $i$.
\begin{enumerate}
        \item If $|\sigma|=1$, for $d=1$, 
    $\displaystyle \mathrm{lk}(\sigma)=\left(\bigsqcup_{j\neq i}\mathrm{sk}_0\Delta^{V_j}\right)\cup\Delta^{V_i-\sigma}\simeq\bigvee_{t_i+1}\mathbb{S}^0$
        and for $d\geq2$
        $$\mathrm{lk}(\sigma)=\left(\left(\bigsqcup_{j\neq i}\mathrm{sk}_0\Delta^{V_j}\right)*\mathrm{sk}_{d-2}\Delta^{V_i-\sigma}\right)\cup\left(\bigsqcup_{j\neq i}\mathrm{sk}_{d-1}\Delta^{V_j}\right)\cup\Delta^{V_i'-\sigma}.$$
\begin{equation*}
\xymatrix{
\emptyset \ar@{->}[rr]^{\cong} \ar@{->}[dr] \ar@{->}[dd] & & \emptyset \ar@{-}[d] \ar@{->}[rd] & & \\
 & \displaystyle\bigvee_{t_i}\mathbb{S}^0 \ar@{->}[rr]^{\simeq} \ar@{->}[dd] & \ar@{->}[d] & \displaystyle\bigvee_{t_i}\mathbb{S}^0 \ar@{->}[r] \ar@{->}[dd] & \displaystyle\bigsqcup_{j\neq i}\bigvee_{\binom{n_j-2}{d}}\mathbb{S}^{d-1} \ar@{->}[dd] \\
\displaystyle\bigvee_{\binom{n_i-2}{d-1}}\mathbb{S}^{d-2} \ar@{-}[r] \ar@{->}[dr] & \ar@{->}[r] & \ast \ar@{->}[dr] &  & \\
  & \displaystyle\bigvee_{t_i\binom{n_i-2}{d-1}}\mathbb{S}^{d-1} \ar@{->}[rr] &  & \bigvee\mathbb{S}^{d-1} \ar@{->}[r] & \displaystyle\bigvee\mathbb{S}^{d-1}\vee\bigvee_{t_i-k+2}\mathbb{S}^{1}
}
\end{equation*}
        \item If $2\leq|\sigma|\leq d$, then
        $$\mathrm{lk}(\sigma)=\Delta^{V_i'-\sigma}\cup\left(\left(\bigsqcup_{j\neq i}\mathrm{sk}_0\Delta^{V_j}\right)*\mathrm{sk}_{d-|\sigma|-1}\Delta^{V_i-\sigma}\right)\simeq\bigvee_{(t_i+1)\binom{n_i-|\sigma|-1}{d-|\sigma|}}\mathbb{S}^{d-|\sigma|}.$$
        \item If $d+1\leq|\sigma|\leq n_i$, then $\mathrm{lk}(\sigma)=\Delta^{V_i'-\sigma}\simeq*$.
    \end{enumerate}
Let $\sigma$ be a simplex such that $|\sigma\cap V_i|\geq1$ and $|\sigma\cap V_j|=1$ for $i\neq j$.
\begin{enumerate}
    \item If $|\sigma\cap V_i|=1$, for $d=1$ $\mathrm{lk}(\sigma)=\emptyset$ and for $d\geq2$,    $$\mathrm{lk}(\sigma)=\mathrm{sk}_{d-2}\Delta^{V_i}\sqcup\mathrm{sk}_{d-2}\Delta^{V_j}\simeq\bigvee_{\binom{n_i-1}{d-2}}\mathbb{S}^{d-2}\sqcup\bigvee_{\binom{n_j-1}{d-2}}\mathbb{S}^{d-2}.$$
    \item If $|\sigma\cap V_i|=l$ with $2\leq l\leq d-1$, then
    $$\mathrm{lk}(\sigma)=\mathrm{sk}_{d-l-1}\Delta^{V_i}\simeq\bigvee_{\binom{n_i-1}{d-l-1}}\mathbb{S}^{d-l-1},$$
    \item If $|\sigma\cap V_i|=d$, then $\mathrm{lk}(\sigma)=\emptyset$.
\end{enumerate}
Now 
$$\mathcal{F}_d(K_{n_1,\dots,n_k})\simeq\bigvee_{\frac{(k-1)(k-2)}{2}}\mathbb{S}^1\vee\bigvee_{i<j}\mathcal{F}_d(K_{n_i,n_j})$$
(see \citep[Corollary 35]{forestfilt}). The result follows from all this.
\end{proof}

\appendix
\section{Appendix: \texorpdfstring{$n$}{Î£}-Truncations}
Recall that a connected CW-complex $Y$ is $n$-\textit{truncated} if $\pi_q(Y)\cong0$ for all $q>n$ and any base point. 
From the definition we get that if $Y$ is $n$-truncated then it is $(n+k)$-truncated for any $k\geq1$.
The \textit{$n$-truncation or $n$-th Postnikov section} is a functor $\mathrm{P}_n:CW\longrightarrow CW$ such that for any 
connected CW-complex $X$ the space $\mathrm{P}_n X$ is $n$-truncated and there is a $(n+1)$-connected map 
$\Phi_n:X\longrightarrow\mathrm{P}_nX$. As we remarked before, we are not interested in getting a fibration between consecutive sections, however, we do 
want the construction to be functorial and $\Phi_n$ to be natural, to allow us to use it with homotopy colimits. While this approach is not new, we were not able 
to find a reference for Proposition \ref{postnikovhocolim}, so for completeness we prove it and other basic facts about the construction.

For any space $X$, let
$M_n(X)=\mathrm{Map}(\mathbb{S}^n,X)$.
Define for $k\geq1$
$$\mathrm{T}_n^kX=\colim\left(\mathrm{T}_n^{k-1}X\longleftarrow\bigsqcup_{\alpha\in M_{n+k}(\mathrm{T}_n^{k-1}X)}\mathbb{S}^{n+k}\longhookrightarrow\bigsqcup_{\alpha\in M_{n+1}(\mathrm{T}_n^{k-1}X)}\mathbb{D}^{n+k+1}\right)$$
and for $k=0$ we take $\mathrm{T}_n^0=X$. From this we construct the truncation as:
$$\mathrm{P}_nX=\colim\left(X\longhookrightarrow\mathrm{T}_n^1X\longhookrightarrow\mathrm{T}_n^2X\longhookrightarrow\cdots\right)$$
From the construction we get for any $k\geq0$ the pair $(\mathrm{P}_n,\mathrm{T}_n^kX)$ is $(n+k+1)$-connected 
because all the cells of $(\mathrm{P}_nX)-\mathrm{T}_n^kX$ have dimension at least $n+2$ (see \citep[Corollary 4.12]{hatcher}) and
by construction we have that $\pi_{n+k}(\mathrm{T}_n^kX)\cong0$ for any base point and $k\geq1$. We take $\Phi_n$ as the inclusion $\Phi_n:X\longhookrightarrow \mathrm{P}_nX$. If $X$ is connected,
we have that $\mathrm{P}_nX$ is a $n$-truncated space and that $\Phi_n$ is $(n+1)$-connected. Notice that $\mathrm{P}_n\emptyset=\emptyset$ for $X=\emptyset$.

Now, for each map $f:X\longrightarrow Y$ and $\alpha$ in $M_{n+1}(X)$ we have the following diagram:
\begin{equation*}
\xymatrix{
 X \ar@{<-}[r] \ar@{->}[d]^f& \displaystyle\bigsqcup_{\alpha \in M_{n+1}(X)}\mathbb{S}^{n+1} \ar@{^(->}[r] \ar@{->}[d]_{\sqcup f\circ\alpha}& \displaystyle\bigsqcup_{\alpha \in M_{n+1}(X)}\mathbb{D}^{n+2} \ar@{->}[d]\\
Y \ar@{<-}[r] & \displaystyle\bigsqcup_{\beta\in M_{n+1}(Y)}\mathbb{S}^{n+1} \ar@{^(->}[r] & \displaystyle\bigsqcup_{\beta\in M_{n+1}(Y)}\mathbb{D}^{n+2}
}
\end{equation*}
From this diagram we obtain a map $\mathrm{T}_n^1X\longrightarrow \mathrm{T}_n^1Y$. Doing this process for each $k$ we obtain a map 
$\mathrm{P}_n(f):\mathrm{P}_nX\longrightarrow\mathrm{P}_nY$. From the construction is easy to check  
that $\mathrm{P}_n(g\circ f)=\mathrm{P}_n(g)\circ\mathrm{P}_n(f)$. Notice that 
$\mathrm{P}_n(f)\circ\Phi_n$ is the restriction of $\mathrm{P}_n(f)$ to $X$, thus 
$\mathrm{P}_n(f)\circ\Phi_n=\Phi_n\circ f$, \textit{i.e.} $\Phi_n$ is natural. 
From this we get that if $f:X\longrightarrow Y$ is a weak equivalence, then $\mathrm{P}_n f$ also is a weak equivalence 
for all $n$.

\begin{lemapp}\label{postnichar}
The $n$-truncation of a connected CW-complex $X$ is characterised up to homotopy by the following two properties:
\begin{enumerate}
    \item $\mathrm{P}_nX$ is $n$-truncated.
    \item For any $n$-truncated CW-complex $Y$, $-\circ\Phi_n:\mathrm{Map}(\mathrm{P}_nX,Y)\longrightarrow\mathrm{Map}(X,Y)$ 
    is a homotopy equivalence.
\end{enumerate}
\end{lemapp}
\begin{proof}
By construction we know that $\mathrm{P}_nX$ is $n$-truncated. Take $Y$ a $n$-truncated space. The map $\Phi_n$ give us a map 
$$-\circ\Phi_n=\Phi_n^\#:\mathrm{Map}(\mathrm{P}_nX,Y)\longrightarrow\mathrm{Map}(X,Y)$$
Notice that this map is the restriction map. We will show that this map is a weak homotopy equivalence. 
Because $Y$ is $n$-truncated, we have that $\pi_k(Y)\cong0$ for all $k\geq n+1$, thus for any space $B$ and $k\geq n+1$
$$\mathrm{Map}_\ast(\mathbb{S}^k,\mathrm{Map}(B,Y))\cong\mathrm{Map}_\ast(\mathbb{S}^k,\mathrm{Map}_\ast(B_+,Y))\cong\mathrm{Map}_\ast(\mathbb{S}^k\wedge B_+,Y)\simeq\mathrm{Map}_\ast(B_+,\Omega^kY))\simeq\ast.$$
Therefore we only need to check that $\Phi_n^\#$ give us an isomorphism in the homotopy groups for $k\leq n$. Now, for $k\leq n$ and any 
space $B$
$$\Omega^k\mathrm{Map}_\ast(B_+,Y)\simeq\mathrm{Map}_\ast(B_+,\Omega^kY)$$
where $\Omega^kY$ is $(n-k)$-truncated. Thus we only need to check that for any $n$-truncated $Y$, the map $\Phi_n^\#$ gives us a 
bijection for $\pi_0$.

Take a map $f:X\longrightarrow Y$. 
For $k=0$ we have the following commutative diagram
\begin{equation*}
\xymatrix{
X \ar@{<-}[rr] \ar@{->}[rrdd]^f & & \displaystyle\bigsqcup_{\alpha \in M_{n+1}(X)}\mathbb{S}^{n+1} \ar@{^(->}[rr] \ar@{->}[dd]_{\sqcup f\circ\alpha}& & \displaystyle\bigsqcup_{\alpha \in M_{n+1}(X)}\mathbb{D}^{n+2} \ar@{-->}[lldd]\\
 & &  & &\\
 & & Y & &
}
\end{equation*}
where the doted arrow exist because $\pi_{n+1}(Y)\cong0$. From this we get a map $f_1:\mathrm{T}_n^1X\longrightarrow Y$. Now, with the same 
construction we get a map $f_2:\mathrm{T}_n^2X\longrightarrow Y$ and so on. By construction 
$\mathrm{P}_nX=\colim_{k\to\infty}\mathrm{T}_n^k X$, thus we get a map 
$f_\infty:\mathrm{P}_nX\longrightarrow Y$ such that $\Phi_n^\#(f_\infty)=f$.

Now we take maps $f,g:\mathrm{P}_nX\longrightarrow Y$ such that $\Phi_n^\#(f)\simeq\Phi_n^\#(g)$. We will show that for each $k\geq0$
there is a homotopy $H_k:T_n^kX\times I\longrightarrow Y$ between the restrictions of $f$ and $g$ to $T_n^kX$ such that 
${H_{k+1}}_{|_{\mathrm{T}_n^k\times I}}=H_k$. By hypothesis we have $H_0$ and we will show how to get $H_{k+1}$ from $H_k$. We take the 
following commutative diagram
\begin{equation}
\xymatrix{
T_n^kX\times I \ar@{<-}[rr] \ar@{->}[rrdd]^{H_k} & & \displaystyle\bigsqcup_{\alpha \in M_{n+1}(\mathrm{T}_n^kX)}\mathbb{S}^{n+k+1}\times I \ar@{^(->}[rr] \ar@{->}[dd]_{\sqcup H_k\circ(\alpha\times1)}& & \displaystyle\bigsqcup_{\alpha \in M_{n+1}(\mathrm{T}_n^kX)}\mathbb{D}^{n+k+2}\times I \ar@{-->}[lldd]\\
 & &  & &\\
 & & Y & &
}
\end{equation}
where we want the dashed map to exist. To see this, we take the following diagram for each map $\alpha$ in $ M_{n+1}(T_n^kX)$
\begin{equation*}
    \xymatrix{
    \mathbb{S}^{n+k+1}\times\partial I \ar@{^(->}[rr] \ar@{^(->}[dd]& & \mathbb{D}^{n+k+2}\times\partial I \ar@{^(->}[dd] \ar@{->}@/^/[rrddd]^{R}& & \\
     & & & &\\
    \mathbb{S}^{n+k+1}\times I \ar@{^(->}[rr] \ar@{->}@/_/[rrrrd]_{H_k\circ(\alpha\times1)}& & \partial\left(\mathbb{D}^{n+k+2}\times I\right) \ar@{.>}[rrd]& & \\
    & & &  & Y
    }
\end{equation*}
where $R$ in one lid is the restriction of $f$ and in the other lid is the restriction of $g$. Because $Y$ is $n$-truncated, the doted maps 
from last diagram can be extended to a map $\mathbb{D}^{n+k+2}\times I\longrightarrow Y$. These maps give us the desired dashed map in 
diagram $(1)$, from where we get the map 
$$H_{k+1}:\mathrm{T}_n^{k+1}X\times I\longrightarrow Y$$
From all the maps $H_k$ we get the desired homotopy $H:\mathrm{P}_n(X)\times I\longrightarrow Y$.

Now, assume there is an $n$-truncated CW-complex $B$ with a map $f:X\longrightarrow B$ such 
that for any $n$-truncated CW-complex $Y$ the map $f^\#:\mathrm{Map}(B,Y)\longrightarrow\mathrm{Map}(X,Y)$ is a homotopy 
equivalence. If we take $h:\mathrm{P}_nX\longrightarrow B$ the extension of $f$, then we have that for any 
$n$-truncated CW-complex $Y$ we get the following commutative diagram 
\begin{equation*}
    \xymatrix{
    \mathrm{Map}(B,Y) \ar@{->}[rr]^{h^\#}  \ar@{->}@/_{7mm}/[rrrr]_{f^\#} & & \mathrm{Map}(\mathrm{P}_nX,Y)  \ar@{->}[rr]^{\Phi_n^\#} & & \mathrm{Map}( X,Y)
    }
\end{equation*}
where the maps $f^\#$ and $\Phi_n^\#$ are homotopy equivalences. Therefore, for any $n$-truncated CW-complex $Y$ we have that
$h^\#:\mathrm{Map}(B,Y)\longrightarrow\mathrm{Map}(\mathrm{P}_nX,Y)$ is a homotopy equivalence and thus we have a bijection 
$h^\#:\left[B,Y\right]\longrightarrow\left[\mathrm{P}_nX,Y\right]$. Then we can take $g$ in $\mathrm{Map}(B,\mathrm{P}_nX)$ 
such that $[g\circ h]=[1_{\mathrm{P}_nX}]$. Now $h^\#([h\circ g])=[h\circ g\circ h]=[h]=h^\#([1_B])$, therefore 
$h\circ g\simeq1_{\mathrm{P}_nX}$ and $g\circ H\simeq1_B$.
\end{proof}

We want to point out that for $X=\emptyset$ the last lemma is also true.

\begin{corapp}
Let $X$ be a connected CW-complex, then $\mathrm{P}_n\cdots\mathrm{P}_nX\simeq\mathrm{P}_nX$.
\end{corapp}
\begin{proof}
By definition of $\mathrm{P}_n$, we get a map 
$$X\longhookrightarrow\mathrm{P}_nX\longhookrightarrow\cdots\longhookrightarrow\mathrm{P}_n\cdots\mathrm{P}_nX$$
such that 
$$\mathrm{Map}(\mathrm{P}_n\cdots\mathrm{P}_nX,Y)\longrightarrow\cdots\longrightarrow\mathrm{Map}(\mathrm{P}_n X,Y)\longrightarrow\mathrm{Map}(X,Y)$$
is a homotopy equivalence for any $n$-truncated CW-complex $Y$.
\end{proof}

Notice that by construction the homotopy induced by $\Phi_n$ is natural, this together
with Lemma \ref{postnichar} will allows us to prove Proposition \ref{postnikovhocolim}\footnote{The author wishes to thank Omar Antolín for 
telling him about this result and its proof.}. We now restate and prove this proposition.

\begin{propapp}\label{postcoliappen}
Take $\mathcal{I}$ a small category and $\mathcal{X}:\mathcal{I}\longrightarrow Top$ such that 
$\mathcal{X}(i)$ is either empty or a connected CW-complex for any $i$ in $\mathcal{I}$. Then
$$\mathrm{P}_n\left(\hocolim_{i\in\mathcal{I}}\mathcal{X}(i)\right)\simeq\mathrm{P}_n\left(\hocolim_{i\in\mathcal{I}}\mathrm{P}_n(\mathcal{X}(i))\right)$$
\end{propapp}
\begin{proof}
We will show that $\mathrm{P}_n\left(\hocolim_{i\in\mathcal{I}}\mathrm{P}_n(\mathcal{X}(i))\right)$ 
achieve the properties given in Lemma \ref{postnichar}. 
Let $Y$ be a $n$-type, by Lemma \ref{postnichar}
$$\mathrm{Map}\left(\mathrm{P}_n\left(\hocolim_{i\in\mathcal{I}}\mathrm{P}_n(\mathcal{X}(i))\right),Y\right)\simeq\mathrm{Map}\left(\hocolim_{i\in\mathcal{I}}\mathrm{P}_n(\mathcal{X}(i)),Y\right)$$
Because 
$\displaystyle\mathrm{Map}(\hocolim_{i\in\mathcal{I}}\mathcal{Z},-)\cong\holim_{i\in\mathcal{I}^{op}}\mathrm{Map}(\mathcal{Z},-)$ 
for any diagram $\mathcal{Z}:\mathcal{I}\longrightarrow Top$ (see \citep[Proposition 8.5.4]{cubicalhomotopy}), we have that
$$\mathrm{Map}\left(\hocolim_{i\in\mathcal{I}}\mathrm{P}_n(\mathcal{X}(i)),Y\right)\cong\holim_{i\in\mathcal{I}^{op}}\mathrm{Map}\left(\mathrm{P}_n(\mathcal{X}(i)),Y\right)$$
By Lemma \ref{postnichar} and the naturality of $\Phi_n$, we have that
$$\holim_{i\in\mathcal{I}^{op}}\mathrm{Map}\left(\mathrm{P}_n(\mathcal{X}(i)),Y\right)\simeq\holim_{i\in\mathcal{I}^{op}}\mathrm{Map}\left(\mathcal{X}(i),Y\right)$$
Lastly we have
$$\holim_{i\in\mathcal{I}^{op}}\mathrm{Map}\left(\mathcal{X}(i),Y\right)\cong\mathrm{Map}\left(\hocolim_{i\in\mathcal{I}}\mathcal{X}(i),Y\right)$$
\end{proof}

\bibliographystyle{acm}
\bibliography{tipohomotopias}





\end{document}